\DeclareMathAlphabet{\mathcal}{OMS}{cmsy}{m}{n}
\newcommand{\C}{\ensuremath{\mathbb{C}}} 
\newcommand{\N}{\ensuremath{\mathbb{N}}} 
\newcommand{\R}{\ensuremath{\mathbb{R}}} 
\newcommand{\Z}{\ensuremath{\mathbb{Z}}} 
\renewcommand{\phi}{\varphi}
\renewcommand{\E}{\ensuremath{\mathbf{E}}} 
\newcommand{\cov}{\ensuremath{\mathbf{c}\mathbf{o}\mathbf{v}}} 
\newcommand{\var}{\ensuremath{\mathbf{v}\mathbf{a}\mathbf{r}}} 
\renewcommand{\P}{\ensuremath{\mathbf{P}}} 
\newcommand{\ind}{\ensuremath{\mathbf{1}}} 
\newcommand{\Pois}{{\rm Pois}} 
\newcommand{\Binom}{{\rm Binom}}
\newcommand{\Tb}{\ensuremath{\mathbb{T}}}
\newcommand{\Ep}{\ensuremath{\mathbb{E}}}
\newcommand{\Ks}{\ensuremath{\mathsf{K}}}
\newcommand{\pr}[1]{\P\{ {#1}\}}
\newcommand{\kommentar}[1]{}
\newcommand\be{\begin{equation}}
\newcommand\ee{\end{equation}}
\def\qed{\hfill\hbox{${\vcenter{\vbox{
    \hrule height 0.4pt\hbox{\vrule width 0.4pt height 6pt
    \kern5pt\vrule width 0.4pt}\hrule height 0.4pt}}}$}}
\newcommand{\remove}[1]{}
\newcommand{\ur}{{\underline r}}
\newcommand{\ovr}{{\overline r}}
 \def\mE{\mathbb{E}}
\newcommand{\Hex}{\mathbb{H}}
\newcommand{\mL}{\mathbb{L}}
\newcommand{\md}{\mathrm{d}}
\newcommand{\cX}{\mathcal{X}}
\newcommand{\cL}{\mathcal{L}}
\newcommand{\cN}{\mathcal{N}}
\renewcommand{\pr}[1]{\P\{\,#1\,\}}
\newcommand{\EXP}[1]{\E\left(#1\right)}
\newcommand{\COV}[1]{\cov\left(#1\right)}
\newcommand{\al}{\alpha}
\newcommand{\Cox}{Cox}
\newcommand{\detLambda}{\Lambda}
\newcommand{\rndLambda}{\mathfrak{L}}
\newcommand{\rndlambda}{\xi}
\newcommand{\rndB}{\mathfrak{B}}
\newcommand{\cV}{\mathcal{V}}
\def\rar{\rightarrow}
\newcommand{\resp}[1]{\,(\text{resp.}\,#1)}
\newcommand{\NBinom}{\mathrm{NBionom}}
\newcommand{\Geo}{\mathrm{Geo}}
\newcommand{\HGeo}{\mathrm{HGeo}}
\spnewtheorem{condition}{Condition}{\bfseries}{\itshape}
\spnewtheorem*{proofsketch}{Sketch of Proof}{\itshape}{\rmfamily}
\newcommand{\mC}{\mathcal{C}}
\newcommand{\De}{\Delta}
\newcommand{\X}{\mathcal{X}}
\newcommand{\lam}{\lambda}
\begin{document}

\title*{Clustering comparison of point processes with applications to random geometric models}
\titlerunning{Clustering comparison of point processes} 

\author{Bart{\l}omiej B{\l}aszczyszyn and Dhandapani Yogeshwaran}
\institute{B. B{\l}aszczyszyn \at Inria/ENS, 23 av. d'Italie, 75214 Paris France \email{Bartek.Blaszczyszyn@ens.fr}
\and D. Yogeshwaran \at Dept. of Electrical Engineering, Technion -- Israel Institute of Technology, Haifa
32000, ISRAEL
 \email{yogesh@ee.technion.ac.il}}
%
%
\maketitle

\abstract{In this chapter we review some examples, methods, and recent
  results involving comparison of clustering properties of point processes.
Our approach is founded on some basic observations allowing us to consider 
void probabilities and moment measures as two complementary tools for 
capturing clustering phenomena in point processes. As might be expected,
smaller values of these characteristics indicate less
clustering. Also, various global and local functionals of random
geometric models driven by point processes admit more or less explicit
bounds involving void probabilities and moment measures, thus
aiding the study of impact of clustering of the underlying point process.
When stronger tools are needed, directional convex ordering of
point processes happens to be an appropriate choice, as well as 
the notion of (positive or negative) association, 
when comparison to the Poisson point
process is considered. We explain the relations between these tools and
provide examples of point processes admitting them.
Furthermore, we sketch some recent results obtained using the aforementioned
comparison tools, regarding 
percolation and coverage properties of the Boolean model, the SINR model,
subgraph counts in random geometric graphs, and more generally, U-statistics of point processes.
We also mention some results on Betti numbers for \v{C}ech and Vietoris-Rips random complexes generated by stationary point processes.
A general observation is that many of the results derived previously for
the Poisson point process generalise to some ``sub-Poisson'' 
processes, defined as those clustering less than the Poisson process in the sense of 
void probabilities and moment measures, negative association or dcx-ordering.
}

\label{chapter.blaszczyszyn}

\section{Introduction}
\label{blaszcyszyn_sec:introduction}

On the one hand, various interesting methods have been developed for
studying local and global functionals of geometric structures driven by Poisson or Bernoulli point
processes (see \cite{MeeRoy96,Penrose03,Yukich12}). On the other
hand, as will be shown in the following section, there are 
many examples of interesting point processes that occur
naturally in theory and applications. So, the obvious question arises how
much of the theory developed for Poisson or Bernoulli point processes
can be carried over to other classes of point processes. 

Our approach  to this question is based on the comparison of 
 clustering  properties of  point processes. Roughly speaking, a set of points in $\R^d$
clusters if it lacks spatial homogeneity, i.e.,
one observes points forming groups which are
well spaced out. Many interesting properties of random geometric
models driven by point processes should depend
on the ``degree'' of clustering.  For example, it is natural to expect  
that concentrating  points of a point process in well-spaced-out
clusters should negatively impact connectivity of the corresponding
random geometric (Gilbert) graph, and that spreading these clustered points ``more homogeneously'' in the space would
result in a smaller critical radius for which the graph percolates.
For many other functionals, using similar heuristic arguments
one can conjecture whether increase or decrease of clustering will increase or decrease the
value of the functional. However, to the best of our knowledge, there has been no systematic approach towards making these arguments rigorous.

The above observations suggest the following program.
We aim at identifying a class or classes of point
processes, which can be compared in the sense of clustering
to a (say homogeneous) Poisson point 
process, and for which ---  by this comparison --- some results known for
the latter process can be extrapolated. 
In particular, there are point processes which in some sense cluster
less  (i.e. spread their points more homogeneously in the space) than
the Poisson point process. We call them 
{\em sub-Poisson}\index{point process!sub-Poisson}. 
Furthermore, we hasten to explain that the usual strong stochastic order  (i.e. coupling as a subset of the Poisson process) is in general not an appropriate tool in this context.

Various approaches to mathematical formalisation of clustering will
form an important part of this chapter. By formalisation, we mean defining a partial order on the space of point processes such that being smaller with respect to the order indicates less clustering. The most simple approach consists  in considering void probabilities and moment measures
as two complementary tools for capturing clustering phenomena in point
processes. As might be expected, smaller values of these
characteristics indicate less clustering. 
When stronger tools are needed,  directionally convex (dcx) ordering of
point processes happens to be a good choice, as well as 
the notion of negative and positive association. 
Working with these tools, we first give  some  useful, generic 
inequalities regarding  Laplace transforms 
of the compared point processes. In the case of dcx-ordering these
inequalities can be generalised to dcx functions of shot-noise 
fields.

Having described  the clustering comparison tools, 
we present several particular results obtained by using them.
Then, in more detail, we study percolation in the Boolean
model (seen as a random geometric graph) and the SINR graph. In particular, we show
how the classical results regarding the existence of a non-trivial phase
transition extend to models based on additive functionals of
various sub-Poisson point processes. Furthermore, we briefly discuss some
applications of the comparison tools to U-statistics of point processes, 
counts of sub-graphs
and simplices in random geometric graphs and simplicial complexes,
respectively. We also mention results on Betti numbers
of  \v{C}ech and Vietoris-Rips random complexes generated by sub-Poisson
point processes.

Let us conclude the above short motivating 
introduction by citing 
an excerpt from a standard reference on stochastic comparison methods
by M\"uller and Stoyan~(2002):
``
It is clear that there are processes of comparable
variability. Examples of such processes are a homogeneous Poisson
process and a cluster process of equal intensity or two hard-core
Gibbs processes of equal intensity and  different hard-core
distances. It would be fine if these variability differences could be
characterized by order relations ... [implying]%
, for  example,
reasonable relationship[s] for second order characteristics such as the
pair correlation function.''; cf~\cite[page~253]{Muller02}. 
We believe that the results reported in this chapter present one of the first
answers to the above  call, although ``Still much work has to be done in the
comparison theory for point processes.'' (ibid.)

The present chapter is organised as follows.
In Sect.~\ref{blaszcyszyn_sec:examples} we recall some examples of point
processes. The idea is to provide as many as possible examples which
admit various 
clustering comparison orders, described then in Sect.~\ref{blaszcyszyn_sec:methods}.
An extensive overview of applications is presented in Sect.~\ref{blaszcyszyn_sec:results}.

\section{Examples of Point Processes}
\label{blaszcyszyn_sec:examples}

In this section we give some examples of point processes, where
our goal is to present them in the context of modelling of clustering phenomena.
Note that throughout this chapter, we consider point processes on the $d$-dimensional
Euclidean space $\R^{d}$, $d\ge1$, although  much of the presented
results have straightforward extensions to point processes on an arbitrary Polish space.  

\subsection{Elementary Models}
 A first example we probably think
of when trying to come up with some spatially homogeneous model is a point process on a deterministic lattice. 
\begin{definition}[Lattice point process]
\label{blaszcyszyn_def.lattice}
\index{point process!lattice|seealso{lattice process}} 
\index{point process!lattice}
\index{lattice process}
By a lattice point process $\Phi_\mL$ we mean a  simple point process
whose points are located on the vertices of some deterministic
lattice $\mL$.
An important special case is the $d$-dimensional {\em cubic lattice} 
$\Delta\Z^d=\{\Delta(u_1,\ldots,u_d), u_i\in\Z\}$ of edge length $\Delta>0$, where $\Z$ denotes the
set of integers. Another specific (two-dimensional) model 
is the {\em hexagonal lattice} on the complex plane given
by  $\Delta\Hex=\{\Delta(u_1+u_2e^{\emph{i}\pi/3}),\
u_1, u_2 \in\Z\}$.
The stationary version $\Phi_\mL^{\text{st}}$of a lattice point process
$\Phi_\mL$ can be constructed by randomly shifting the deterministic
pattern through a vector $U$
uniformly distributed in some fixed cell of the lattice $\mL$, i.e.
$\Phi_\mL^{\text{st}}=\Phi_\mL + U.$ 
Note that the intensity of the stationary lattice point process is equal to the
inverse of the cell volume. In particular, the
intensity $\lambda_{\Delta\Z^d}$ of  $\Phi_{\Delta\Z^d}^{\text{st}}$ is equal to
$\lambda_{\Delta\Z^d}=1/\Delta^d$, while that of
$\Phi_{\Delta\Hex}^{\text{st}}$ is equal to $\lambda_{\Delta\Hex}=2/(\sqrt
3/\Delta^2)$. 
\end{definition}

Lattice point processes are usually considered to model  ``perfect'' or ``ideal''
structures, e.g. the hexagonal lattice on the complex plane is used to study perfect
cellular communication networks. We will see however, without
further modifications, they escape 
from the clustering comparison methods presented in 
Sect.~\ref{blaszcyszyn_sec:methods}.

When the ``perfect structure'' assumption cannot be retained and one
needs a random pattern, then the Poisson point process usually comes as a natural 
first modelling  assumption.  
We therefore recall the definition of the Poisson process for the convenience of the reader, see also the survey given in \cite{baddeley12}.

\begin{definition}[Poisson point process]
\index{point process!Poisson|seealso{Poisson process}}
\index{point process!Poisson}
\index{Poisson process}
Let $\detLambda$ be a (deterministic) locally finite measure
on the Borel sets of $\R^d$. 
The random counting measure $\Pi_{\Lambda}$ is called a {\em  Poisson point process} with intensity measure
$\Lambda$ if for every $k=1,2,\ldots$ and 
all bounded, mutually  disjoint Borel sets
$B_1,\allowbreak\ldots,B_k$, the random variables 
$\Pi_{\Lambda}(B_1),\allowbreak\ldots,\Pi_{\Lambda}(B_k)$ are 
independent, with Poisson distribution
$\Pois(\Lambda(B_1)),\ldots, \Pois(\Lambda(B_k))$, respectively.
In the case when $\Lambda$ has an integral representation
$\Lambda(B)=\int_B\lambda(x)\,\md x$, 
where $\lambda: \R^d \rightarrow \R_+$ is some measurable function, 
we call $\lambda$
the \textit{intensity field} of the Poisson point process.
In particular, if $\lambda$ is a constant,
we call $\Pi_{\Lambda}$ a 
{\em homogeneous} Poisson point process and denote it by  $\Pi_\lambda$.
\end{definition}

The Poisson point process is a good model when one does not expect
any ``interactions'' between points. This is related to the
{\em complete randomness} property of  Poisson processes, cf.~\cite[Theorem 2.2.III]{DVJI2003}.
\begin{svgraybox}
The homogeneous Poisson point process is commonly considered as a reference model
in comparative studies of   clustering phenomena.
\end{svgraybox}

\subsection{Cluster Point Processes --- Replicating and Displacing Points}
\label{blaszcyszyn_ss.perturbation}
We now present several operations on the points of a
point process,
which in conjunction with the two elementary models presented above
allow us to construct various other interesting examples of point processes.
We begin by recalling the  following elementary operations.

Superposition of  patterns of points consists of set-theoretic
addition of these  points. Superposition of (two or more) point processes
$\Phi_1,\ldots,\Phi_n$, defined as random counting measures, consists of adding
these measures $\Phi_1+\cdots+\Phi_n$. 
Superposition of independent  point processes is of special interest. 

Thinning of a point process  consists of suppressing some subset of its
points. {\em Independent thinning} with retention function $p(x)$
defined on $\R^d$, $0\le p(x)\le 1$, 
consists in suppressing points independently, given a realisation of the 
point process, with probability $1-p(x)$, which might depend on the
location $x$  of the point to be suppressed or not. 

Displacement consists in, possibly randomised, mapping 
of points of a  point process to the same or another space. {\em Independent
  displacement} with displacement (probability) kernel $\cX(x,\cdot)$
from $\R^d$ to $\R^{d'}$, $d'>1$,  consists of  
independently mapping each point $x$ of a given realisation of the
original point process  to a new, random location in 
$\R^{d'}$ selected according to the kernel $\cX(x,\cdot)$. 

\begin{remark}
An interesting property of the class of Poisson point processes
is that it is closed with respect to independent displacement, thinning
and superposition, i.e. the result of these operations made on  Poisson
point processes is  a Poisson point process, which is not granted in the
case of an arbitrary (i.e. not independent) superposition or displacement,
see e.g. \cite{DVJI2003, DVJII2007}. 
\end{remark}

Now, we define a more sophisticated operation on  point processes that will
allow us to construct new classes of point processes
with interesting clustering properties.
\begin{definition}[Clustering perturbation of a point process]
\index{point process!clustering perturbation}
Let $\Phi$ be a point process on $\R^d$ and $\cN(\cdot,\cdot)$,
$\cX(\cdot,\cdot)$ be two probability kernels from $\R^d$ to
the set of non-negative integers $\Z_+$ and $\R^{d'}$, $d,d'\ge1$, respectively.
Consider the following subset of $\R^{d'}$. Let
\begin{equation}
\label{blaszcyszyn_defn:perturbed_pp}
\Phi^{\text{pert}}= \bigcup_{X \in \Phi} \bigcup_{i=1}^{N_X} \{Y_{iX}\}\,,
\end{equation}
where, given $\Phi$, 
\begin{itemize}
\item[1.] $(N_X)_{X\in\Phi}$ are independent, non-negative
integer-valued random variables with (conditional) distribution
$\P \left( N_X\in\cdot\,|\,\Phi \right)=\cN(X,\cdot)$, 
\item[2.] ${Y}_{X}=(Y_{iX}; i =1,2,\ldots)$, $X\in\Phi$ are
  independent vectors of  i.i.d.  random elements of  $\R^{d'}$, with
  $Y_{iX}$ having the conditional distribution
  $\P \left( Y_{iX}\in\cdot\,|\,\Phi \right)=\cX(X,\cdot)$. 
\end{itemize}

Note that the inner sum in~(\ref{blaszcyszyn_defn:perturbed_pp}) is interpreted as
$\emptyset$ when $N_X=0$.
\end{definition}

The random set  $\Phi^{\text{pert}}$ given in (\ref{blaszcyszyn_defn:perturbed_pp}) can be considered as a point process
on $\R^{d'}$ provided it is a locally finite. In what follows, 
we will assume a stronger condition, namely that the itensity measure of 
$\Phi^{\text{pert}}$ is locally finite (Radon), i.e.
\begin{equation}
\label{blaszcyszyn_e:perturbation-Radon}
\int_{\R^d} n(x)\cX(x,B)\,  \al(\md x)< \infty,
\end{equation}
for all bounded Borel sets $B\subset\R^{d'}$, where  $\al(\cdot)=\EXP \Phi(\cdot) $ denotes the intensity measure of  $\Phi$ and
\begin{equation}\label{bl_equation_1}
n(x)=\sum_{k=1}^\infty 
k\cN(x,\{k\})
\end{equation}
is the mean value of the distribution $\cN(x,\cdot)$.

\begin{svgraybox}
{\em Clustering perturbation} of a given parent process $\Phi$ consists in 
independent replication and
displacement of the points of $\Phi$, with the  number of
replications of a given point  $X\in\Phi$ having distribution
$\cN(X,\cdot)$ and the replicas' locations having distribution
$\cX(X,\cdot)$. The replicas of $X$ form a {\em cluster}.
\end{svgraybox}

For obvious reasons, we call $\Phi^{\text{pert}}$ a {\em perturbation} of~$\Phi$
driven by  the {\em replication kernel} \index{perturbation kernel!replication}
$\cN$ and the {\em displacement
  kernel} \index{perturbation kernel!displacement} $\cX$. 
It is easy to see that the independent thinning and 
displacement operations described above are special cases of
clustering perturbation. In what follows we present a few known examples of point processes arising as  clustering 
perturbations of a lattice or Poisson
point process. For simplicity we assume that replicas stay in the same
state space, i.e.  $d=d'$.

\begin{example}[Binomial point process]
\label{blaszcyszyn_ex.Binomial}
\index{point process!binomial|seealso{binomial process}}
\index{point process!binomial}
\index{binomial process}
A (finite) binomial point process has a fixed total number  $n<\infty$
of points, which are independent and identically distributed according to
some (probability) measure $\Lambda$ on $\R^d$.
It can be seen as a Poisson  point process $\Pi_{n\Lambda}$  
conditioned to have $n$ points, cf. \cite{DVJI2003}. 
Note that this property might be seen as a clustering perturbation of a
one-point process, with deterministic  number $n$ of point replicas
and displacement distribution $\Lambda$.
\end{example}  

\begin{example}[Bernoulli lattice]
The Bernoulli lattice arises as independent thinning of a lattice point
process; i.e.,
each point of the lattice is retained (but not displaced)
with some probability $p \in (0,1)$ and suppressed otherwise.
\end{example}

\begin{example}[Voronoi-perturbed lattices]
\label{blaszcyszyn_ex:Voronoi_pert_lattice}
\index{lattice process!Voronoi-perturbed}
These are  perturbed lattices  with displacement kernel $\cX$,
where the distribution
$\cX(x,\cdot)$ is supported on the Voronoi cell $\cV(x)$ of vertex $x\in \mL$
of the original (unperturbed) lattice $\mL$.
In other words,   each replica  of a given lattice point  gets
independently translated  to some random location chosen
in the Voronoi cell  of the original lattice point.
Note that one can also choose other bijective, lattice-translation invariant
mappings of associating lattice cells to
lattice  points; e.g. associate a given cell of the square lattice  
on the plane $\R^2$ to its ``south-west''  corner.

By a {\em simple perturbed lattice} 
\index{lattice process!simple perturbed} we mean the Voronoi-perturbed
lattice whose points are uniformly translated  in the corresponding
cells, without being replicated. 
Interestingly enough, 
the Poisson point process $\Pi_\Lambda$ with some intensity measure $\Lambda$ 
can be constructed 
as a Voronoi-perturbed lattice. 
\index{Poisson process!as perturbed lattice} 
\index{perturbation kernel!replication!Poisson}
Indeed, it is enough to take
the Poisson replication kernel $\cN$ given by $\cN(x,\cdot)=\Pois(\Lambda(\cV(x)))$ and 
the displacement kernel $\cX$ with $\cX(x,\cdot)=\Lambda(\cdot\cap
\cV(x))/\Lambda(\cV(x))$; cf.~Exercise~\ref{ex.lattice-to-Poisson}.
Keeping the above  displacement kernel and 
replacing the Poisson distribution in the replication  kernel
by some other distributions
convexly smaller or larger than the  Poisson distribution, one gets the following
two particular classes of Voronoi-perturbed lattices, clustering their points
less or more than the Poisson point process $\Pi_\Lambda$ (in a sense
that will be formalised in Sect.~\ref{blaszcyszyn_sec:methods}).

{\em Sub-Poisson Voronoi-perturbed lattices}
\index{point process!sub-Poisson}
\index{lattice process!sub-Poisson perturbed} 
\index{perturbation kernel!convex ordering}
are 
Voronoi-perturbed lattices such that   $\cN(x,\cdot)$ is convexly
smaller  than $\Pois(\Lambda(\cV(x)))$.
Examples of distributions convexly smaller than $\Pois(\lambda)$ are the 
  hyper-geometric distributions \index{perturbation kernel!replication!hyper-geometric}
 $\HGeo(n,m,k)$,
$m,k\le n$, $km/n=\lambda$
and the binomial distributions. \index{perturbation kernel!replication!binomial}
 $\Binom(n,\lambda/n)$, $\lambda\le n$%
,
which can be ordered as follows:
\begin{equation}
\label{blaszcyszyn_eqn:sub_Poisson_rvs}
\HGeo(n,m,\lambda n/m)\le_{\text{cx}} \Binom(m,\lambda/m)\le_{\text{cx}}
\Binom(r,\lambda/r)\le_{\text{cx}} \Pois(\lambda),
\end{equation}
for $\lambda\le m\le \min(r,n)$;
cf.~\cite{Whitt1985}. Recall that
$\Binom(n,p)$ has the probability mass function
  $p_{\Binom(n,p)}(i)={n\choose i}p^i(1-p)^{n-i}$ ($i=0,\ldots,n$), whereas 
$\HGeo(n,m,k)$ has the probability
  mass function $p_{\HGeo(n,m,k)}(i)={m\choose i}{n-m\choose
    k-i}/{n\choose k}$ ($\max(k-n+m,0)\le i\le m$);
cf. Exercise~\ref{ex.convex-order}. 

{\em Super-Poisson Voronoi-perturbed lattices}
\index{lattice process!super-Poisson perturbed}
are \label{blaszcyszyn_sec:sup_poisson_lattice} Voronoi-perturbed lattices 
with  $\cN(x,\cdot)$ convexly larger than $\Pois(\Lambda(\cV(x)))$.
Examples of distributions convexly larger than $\Pois(\lambda)$ are the
negative binomial distribution ${\NBinom}(r,p)$ 
\index{perturbation kernel!replication!negative binomial}
with $rp/(1-p)=\lambda$ and the geometric distribution $\Geo(p)$ 
\index{perturbation kernel!replication!geometric}
distribution
$1/p-1=\lambda$, 
which can be ordered in the following way:
\begin{eqnarray}
\label{blaszcyszyn_eqn:super_Poisson_rvs}
\Pois(\lambda) & \le_{\text{cx}} & \NBinom(r_2,\lambda/(r_2+\lambda))
\le_{\text{cx}} \NBinom(r_1,\lambda/(r_1+\lambda)) \nonumber \\
\label{blaszcyszyn_eqn:sup_Poisson_rvs} & \le_{\text{cx}} & \Geo(1/(1+\lambda)) 
\le_{\text{cx}}\sum_{j}\lambda_j\;\Geo(p_j) 
\end{eqnarray}
with $r_1\le r_2$,
$0\le \lambda_j\le 1$, $\sum_j\lambda_j=1$ and
$\sum_j\lambda_j/p_j=\lambda+1$,
where the largest distribution in (\ref{blaszcyszyn_eqn:super_Poisson_rvs}) is a mixture
of geometric distributions having mean~$\lambda$. Note that any mixture of Poisson distributions 
\index{perturbation kernel!replication!Poisson mixture}
having mean $\lambda$ is in cx-order larger than $\Pois(\lambda)$.
Furthermore, recall that the probability mass functions of $\Geo(p)$ and $\NBinom(r,p)$ are given
by
$p_{\Geo(p)}(i)=p(1-p)^{i}$ and  $p_{\NBinom(r,p)}(i)=
{r+i-1\choose i}p^i(1-p)^r$, respectively.
\end{example}

\begin{example}[Generalised shot-noise Cox point processes]
\label{blaszcyszyn_ex.GSNCox}
\index{Cox process!generalised}
These are clustering perturbations of an arbitrary parent point
process $\Phi$,  with replication kernel $\cN$, where $\cN(x,\cdot)$
is the Poisson distribution $\Pois(n(x))$
and $n(x)$ is the mean value given in (\ref{bl_equation_1}).
Note that in this case, given $\Phi$,  
the clusters (i.e. replicas of the given parent
point) form independent Poisson point process
$\Pi_{n(X)\cX(X,\cdot)}$, $X\in\Phi$.
This special class of  Cox point processes (cf.
Sect.~\ref{blaszcyszyn_ss.Cox}) has been  introduced in~\cite{Moller05}.
\end{example}

\begin{example}[Poisson-Poisson cluster point processes]
\label{blaszcyszyn_ex.PP-cluster}
This is a special case of the generalised shot-noise Cox point processes,
with the parent point process being  Poisson, i.e. $\Phi = \Pi_\Lambda$
for some intensity measure $\Lambda$.
A further special case is often discussed in the literature, where the displacement kernel $\cX$ 
is such that $\cX(X,\cdot)$ is the uniform distribution
in  the ball $B_r(X)$ of some given radius $r$. It is called the
{\em  Mat\'ern cluster \index{Cox process!Matern@Mat\'ern cluster}
 point process}. 
If  $\cX(X,\cdot)$ is symmetric Gaussian, then
the resulting Poisson-Poisson cluster point process is called a {\em (modified)
  Thomas \index{Cox process!Thomas (modified)}
 point process}.  
\end{example}

\begin{example}[Neyman-Scott point process]
\label{blaszcyszyn_ex.NS-PP}
\index{point process!Neyman-Scott|seealso{Neyman-Scott process}}
\index{point process!Neyman-Scott}
\index{Neyman-Scott process}
These point processes arise as  
a clustering  perturbation of a Poisson parent point process
$\Pi_\Lambda$, with arbitrary (not necessarily Poisson)
replication kernel $\cN$. 
\end{example}

\subsection{Cox Point Processes--- Mixing Poisson Distributions}
\label{blaszcyszyn_ss.Cox}
We now consider a rich class of point processes 
known also as doubly stochastic Poisson point process, which 
are  often used to model patterns exhibiting more clustering than 
the Poisson point process. 
\begin{definition}[Cox point process]
\label{blaszcyszyn_d.Cox}
\index{point process!Cox|seealso{Cox process}} 
\index{point process!Cox} 
\index{Cox process}
Let $\rndLambda$  be a random locally finite (non-null)
measure on $\R^d$. A {\em Cox point process} $\Cox_{\rndLambda}$ on $\R^d$
generated 
by $\rndLambda$ is defined as point process having the property that
$\Cox_{\rndLambda}$ conditioned on $\rndLambda=\Lambda$
is the Poisson point process $\Pi_\Lambda$. 
Note that $\rndLambda$ is called the {\em random intensity measure} of $\Cox_{\rndLambda}$. 
In case when the random measure
$\rndLambda$ has an integral representation
$\rndLambda(B)=\int_B\rndlambda(x)\,\md x$, with $\{ \rndlambda(x), x \in \R^d\}$ being
a random field, we call this field  
the {\em random intensity} field of the Cox process. In the special case that
$\rndlambda(x)=\xi\lambda(x)$ for all $x\in \R^d$, where $\xi$ is a (non-negative) random
variable and $\lambda$ a (non-negative) deterministic function, the
corresponding Cox point process is called a {\em mixed Poisson}
\index{Poisson process!mixed} 
 point process.
  \end{definition}

\begin{svgraybox}
Cox processes may be
seen as a result of an operation transforming some  random 
measure $\rndLambda$ into a point process  $\Cox_{\rndLambda}$,
being a mixture of Poisson processes.
\end{svgraybox}

In Sect. \ref{blaszcyszyn_ss.perturbation}, we have already seen that clustering perturbation of an arbitrary 
point process  with Poisson replication kernel gives rise to Cox
processes (cf. Example~\ref{blaszcyszyn_ex.GSNCox}), where 
Poisson-Poisson cluster point processes are  special cases 
with  Poisson parent point process.
This latter class of point processes can be naturally extended by replacing 
the Poisson parent process  by 
a L\'{e}vy basis. 
\begin{definition}[L\'evy basis]
\index{L\'evy basis}
A collection of real-valued random variables  $\{Z(B), B \in \mathcal{B}_0\}$,  
where $\mathcal{B}_0$ denotes the family of bounded Borel sets
in $\R^d$, is said to be a  {\em L\'{e}vy basis} if the $Z(B)$ are  infinitely divisible random variables and 
for any sequence $\{B_n\}$, $n\ge 1$, of disjoint bounded Borel sets in $\R^d$,
$Z(B_1),Z(B_2),\ldots$ are independent random variables ({\em complete
  independence property}), with $Z(\bigcup_n B_n)=\sum_n Z(B_n)$
almost surely provided that $\bigcup_n B_n$ is bounded.
\end{definition}

In this chapter, we shall consider only non-negative L\'{e}vy bases.
We immediately see that the Poisson point process is a special case of a L\'evy
basis. Many other concrete examples of L\'evy bases can be
obtained by ``attaching'' independent, infinitely divisible
 random variables $\xi_i$  to a deterministic, locally finite sequence
 $\{x_i\}$ of (fixed) points in $\R^d$ and letting $Z(B)
 =\sum_{i}\xi_i\ind(x_i\in B)$. In particular, clustering
 perturbations of a lattice, with infinitely divisible replication
 kernel and no displacement (i.e. $\cX(x,\cdot)=\delta_x$, where
 $\delta_x$ is the Dirac measure at $x$) are L\'evy bases.
Recall that any degenerate (deterministic),
Poisson, negative binomial, gamma as well as Gaussian, Cauchy,
Student's distribution are examples of infinitely divisible distributions.

It is possible to define an integral of a measurable function 
with respect to  a L\'evy basis (even if the latter is not always a
random measure; see~\cite{Hellmund08} for details) and consequently
consider the following classes of Cox point processes.  
\begin{example}[L\'{e}vy-based Cox point process]
Consider a  Cox point process
 $\Cox_\rndLambda$ with random intensity field that 
is an integral shot-noise field of a L\'{e}vy basis, i.e.
$\rndlambda(y) = \int_{\R^d}k(x,y)\allowbreak Z (\md x)$,
where $Z$  is a L\'{e}vy basis and $k:\R^d\times \R^d \rightarrow \R_+$ is some 
non-negative function almost surely integrable with respect to
$Z\otimes\md y$.
\end{example}

\begin{example}[Log-L\'{e}vy-based Cox point process]
\index{Cox process!log-Levy@log-L\'evy}
These are Cox point processes  with random intensity field  
given by
$\rndlambda(y) = \exp \left( \int_{\R^d} k(x,y) Z (\md x) \right)$, 
where  $Z$ and $k$ satisfy the same conditions as above.
\end{example}

Both L\'evy- and log-L\'evy-based Cox point processes have been introduced in
\cite{Hellmund08}, where one can find many examples of these processes.
We still mention another class of Cox point processes considered in~\cite{Moller98}.
\begin{example}[Log-Gaussian Cox point process]
\index{Cox process!log-Gaussian}
Consider a  Cox point process whose random intensity field is given by
$\rndlambda(y)= \exp 	\left( \eta(y)\right)$ where $\{ \eta (y) \}$ 
is a Gaussian random field.
\end{example}

\subsection{Gibbs and Hard-Core Point Processes}
Gibbs \index{point process!Gibbs} and hard-core point processes are 
two further classes of point processes, 
which should appear in the  context of modelling of clustering phenomena.  

Roughly speaking Gibbs point processes are point processes having a density with respect to the Poisson point process. In other words, we obtain a Gibbs point process, when we ``filter'' Poisson patterns of points, giving more chance to appear for some configurations and less chance (or completely suppressing) some
others. A very simple example is a Poisson point process conditioned to obey some
constraint  regarding  its points in some bounded Borel set (e.g. to have some given
number of points there). Depending on the ``filtering'' condition we may naturally create point processes which cluster more or less than the Poisson point process. 

Hard-core point processes are point process in which the points are separated from each other by some minimal distance, hence in some sense clustering is
``forbidden by definition''.

However,  we will not give precise definitions, nor present particular
examples from these classes of point processes, because, unfortunately, we do not have
yet  interesting enough comparison results for them,
to be presented in the remaining part of this chapter.

\subsection{Determinantal and Permanental Point Process}
We briefly recall two special classes of point processes arising in random matrix theory, combinatorics, and physics.
They are ``known'' to cluster their points, less or
more, respectively, than the Poisson point process.

\begin{definition}[Determinantal point process]
\label{blaszcyszyn_defn.det}
\index{point process!determinantal|seealso{determinantal process}}
\index{point process!determinantal}
\index{determinantal process}
A simple point process on $\R^d$
is said to be a {\em determinantal point process} with a kernel function $k:\R^d\times\R^d\rightarrow \C$ with respect to
a Radon measure $\mu$ on $\R^d$ if the  joint intensities
$\rho^{(\ell)}$ of the factorial moment measures
of the point process with respect to the product measure $\mu^{\otimes \ell}$ satisfy
$\rho^{(\ell)}(x_1,\ldots,x_\ell) = \det \big( k(x_i,x_j) \big)_{1
\leq i,j \leq \ell}$ for
all $\ell$, where $\big(a_{ij}\big)_{1 \leq i,j \leq \ell}$ stands for a
matrix with entries $a_{ij}$ and $\det$ denotes the
determinant of the matrix. 
\end{definition}

\begin{definition}[Permanental point process]
\label{blaszcyszyn_defn.perm}
\index{point process!permanental|seealso{permanental process}}
\index{point process!permanental}
\index{permanental process}
Similar to the notion of a determinantal point process, one says that a simple point process is a {\em permanental point process} with a kernel function $k:\R^d\times\R^d\rightarrow \C$ with respect to a Radon measure $\mu$ on $\R^d$ if the  joint intensities $\rho^{(\ell)}$ of the point process with respect to $\mu^{\otimes \ell}$
satisfy $\rho^{(\ell)}(x_1,\ldots,x_\ell) = \text{per}\big( k(x_i,x_j) \big)_{1
\leq i,j \leq \ell}$ for all $\ell$, where $\text{per}\big( \cdot \big)$
stands for the permanent of a matrix.  
From~\cite[Proposition~35 and Remark~36]{Ben06}, we know that
each permanental point process is a Cox point process.
\end{definition}

Naturally, the kernel function $k$ needs to satisfy some additional
assumptions for the existence of the point processes defined above. We refer
to~\cite[Chap.~4]{Ben09} 
for a general framework which allows  to study 
determinantal and permanental point processes, see also~\cite{Ben06}.
Regarding statistical aspects and simulation methods for  determinantal
point processes, see \cite{Lavancier12}.

Here is an important example of a determinantal point process
recently studied on the theoretical ground (cf. e.g.~\cite{Goldman2010})
and considered in modelling applications (cf.~\cite{Miyoshi2012}).
\begin{example}[Ginibre point process]
\label{blaszcyszyn_ex.Ginibre}
\index{point process!Ginibre}
\index{determinantal process!Ginibre}
This is the determinantal point process on $\R^2$
with kernel function
$k((x_1,x_2),\allowbreak(y_1,y_2))=\exp[(x_1y_1+x_2y_2)+ i(x_2y_1-x_1y_2)]$,
$x_j,y_j\in\R$,  $j=1,2$,
with respect to the measure
$\mu(\md(x_1,x_2))=\pi^{-1}\exp[-x_1^2-x_2^2]\,\md x_1\md x_2$.
\end{example}

{
\begin{exercise}
\label{ex.lattice-to-Poisson}
Let $\Phi$ be a simple point process on~$\R^d$. Consider its cluster
perturbation $\Phi^{\text{pert}}$ defined in~(\ref{blaszcyszyn_defn:perturbed_pp}) with 
the Poisson  replication kernel $\cN(x,\cdot)=\Pois(\Lambda(\cV(x)))$,
where $\cV(x)$ is the Voronoi cell of $x$ in $\Phi$, and 
the displacement kernel $\cX(x,\cdot)=\Lambda(\cdot\cap
\cV(x))/\Lambda(\cV(x))$, for some given deterministic Radon measure
$\Lambda$ on $\R^d$. Show that $\Phi^{\text{pert}}$ is Poisson with intensity
measure $\Lambda$.
\end{exercise}
}

{
\begin{exercise}
\label{ex.convex-order}
Prove~(\ref{blaszcyszyn_eqn:sub_Poisson_rvs}) {and (\ref{blaszcyszyn_eqn:sup_Poisson_rvs})} by showing 
the logarithmic concavity of the ratio of the respective probability mass
functions, which  implies increasing convex order and, consequently,
cx-order provided the distributions have the same  means.
\end{exercise}
}

\section{Clustering Comparison Methods}
\label{blaszcyszyn_sec:methods}

Let us begin with  the following {\em informal} definitions.
\begin{svgraybox}
A set of points is spatially
homogeneous if approximately the same numbers of points occur in any
spherical region of a given volume. A set of points clusters if it lacks
spatial homogeneity; more precisely, if  one observes points 
arranged in groups being well spaced out.
\end{svgraybox}

Looking at Fig.~\ref{blaszcyszyn_f.Lattice1}, it is  intuitively obvious that
(realisations of) some point processes cluster less than others.
However, the mathematical formalisation of such a statement appears not so easy.
In what follows, we present a few possible approaches. We begin with
the usual statistical descriptors of spatial homogeneity, then show
how void probabilities and moment measures come into the
picture, in particular in relation to another notion useful in this
context:  positive and negative association. Finally we deal with
directionally convex ordering of
point processes. 

This kind of organisation roughly corresponds to presenting ordering methods from weaker to stronger ones; cf. Fig.~\ref{blaszcyszyn_f.flowchart1}.
We also show how the different examples presented in
Sect.~\ref{blaszcyszyn_sec:examples} admit these comparison methods,
mentioning the respective results in their strongest versions.
We recapitulate results regarding comparison to the Poisson process in
Fig.~\ref{blaszcyszyn_f.flowchart2}.

\begin{figure*}[!t]
\begin{center}
\begin{minipage}[b]{0.25\linewidth}
\includegraphics[width=1.\linewidth]{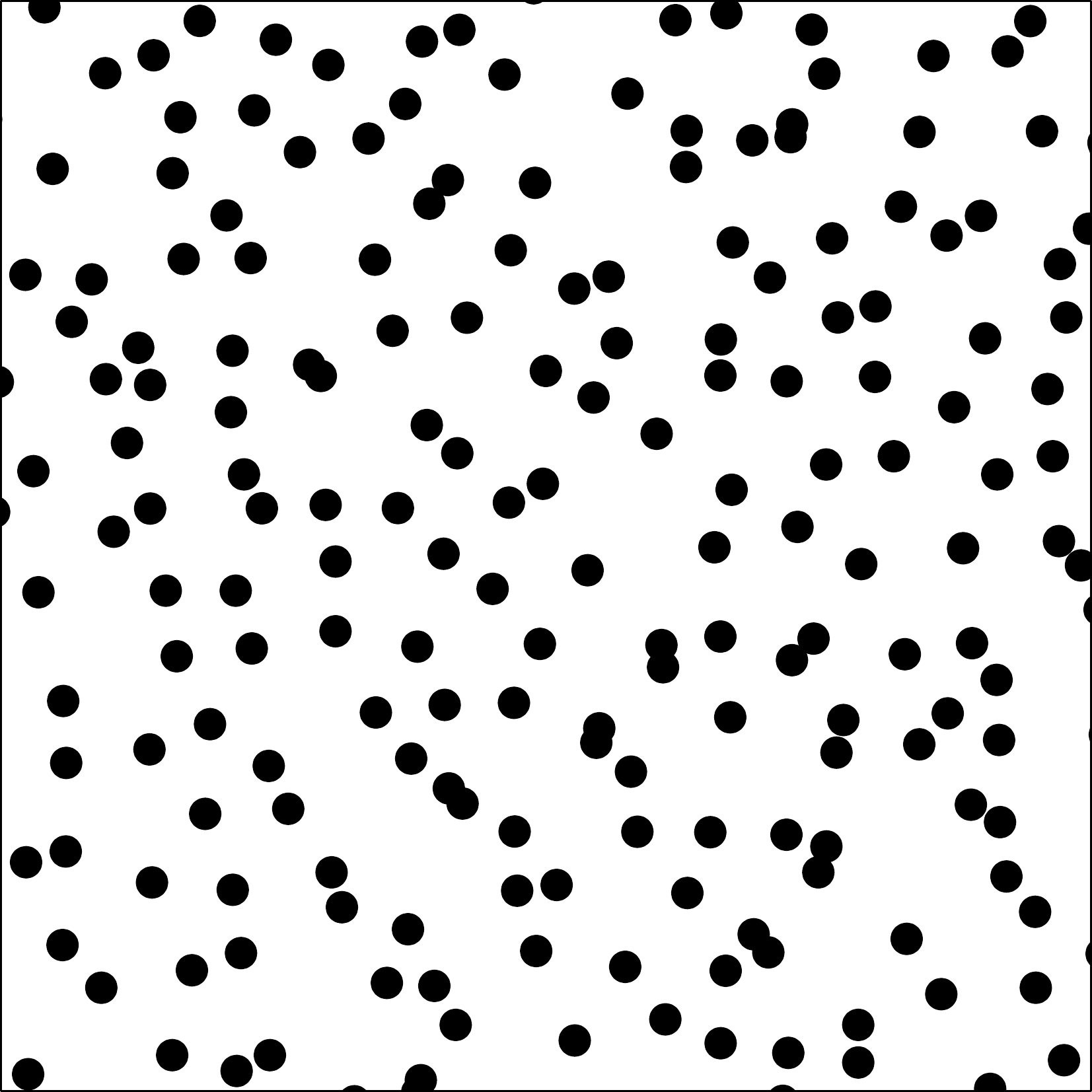}\\
\centerline{simple perturbed lattice}
\end{minipage}
\begin{minipage}[b]{0.25\linewidth}
\includegraphics[width=1.\linewidth]{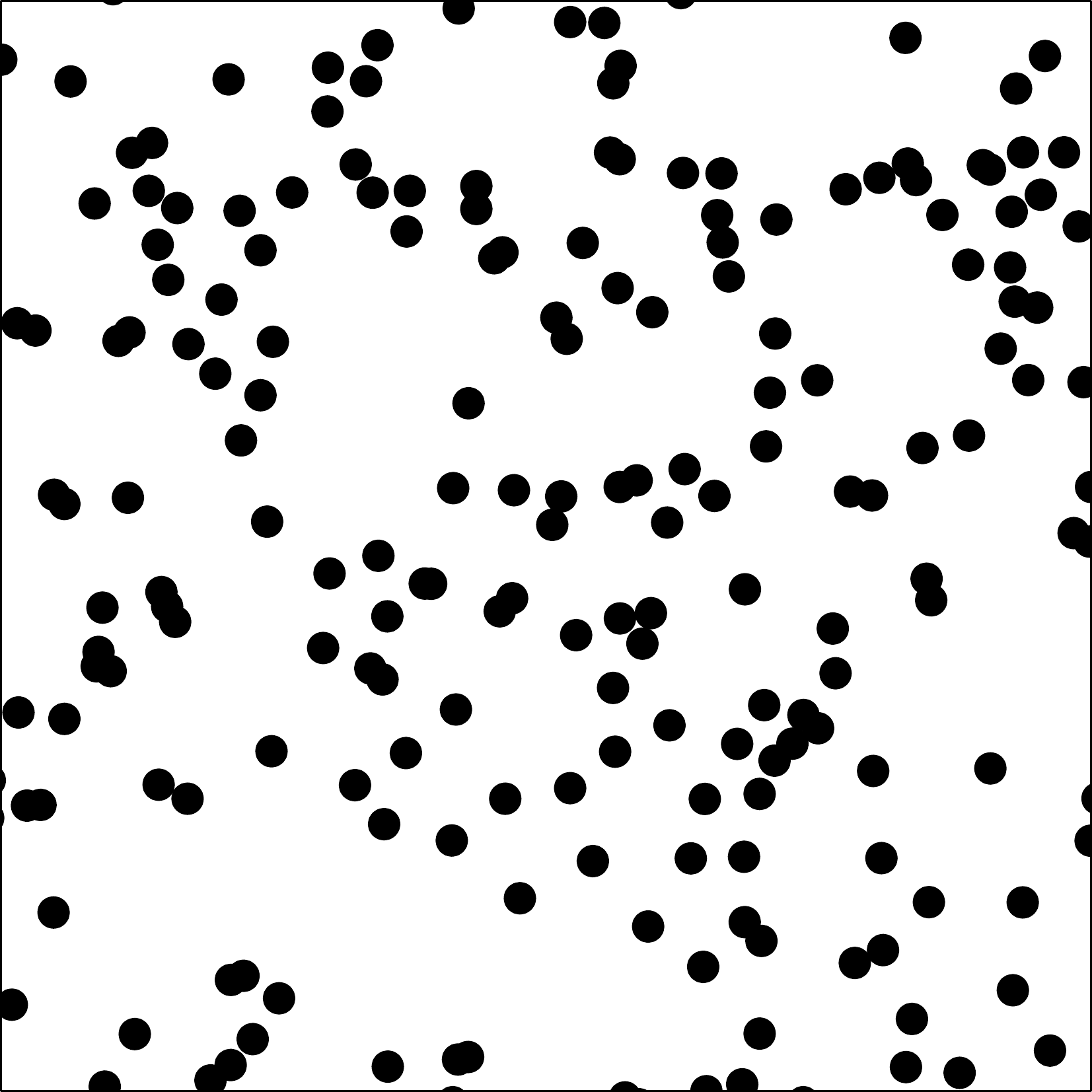}\\
\centerline{Poisson point process}
\end{minipage}
\begin{minipage}[b]{0.25\linewidth}
\includegraphics[width=1.\linewidth]{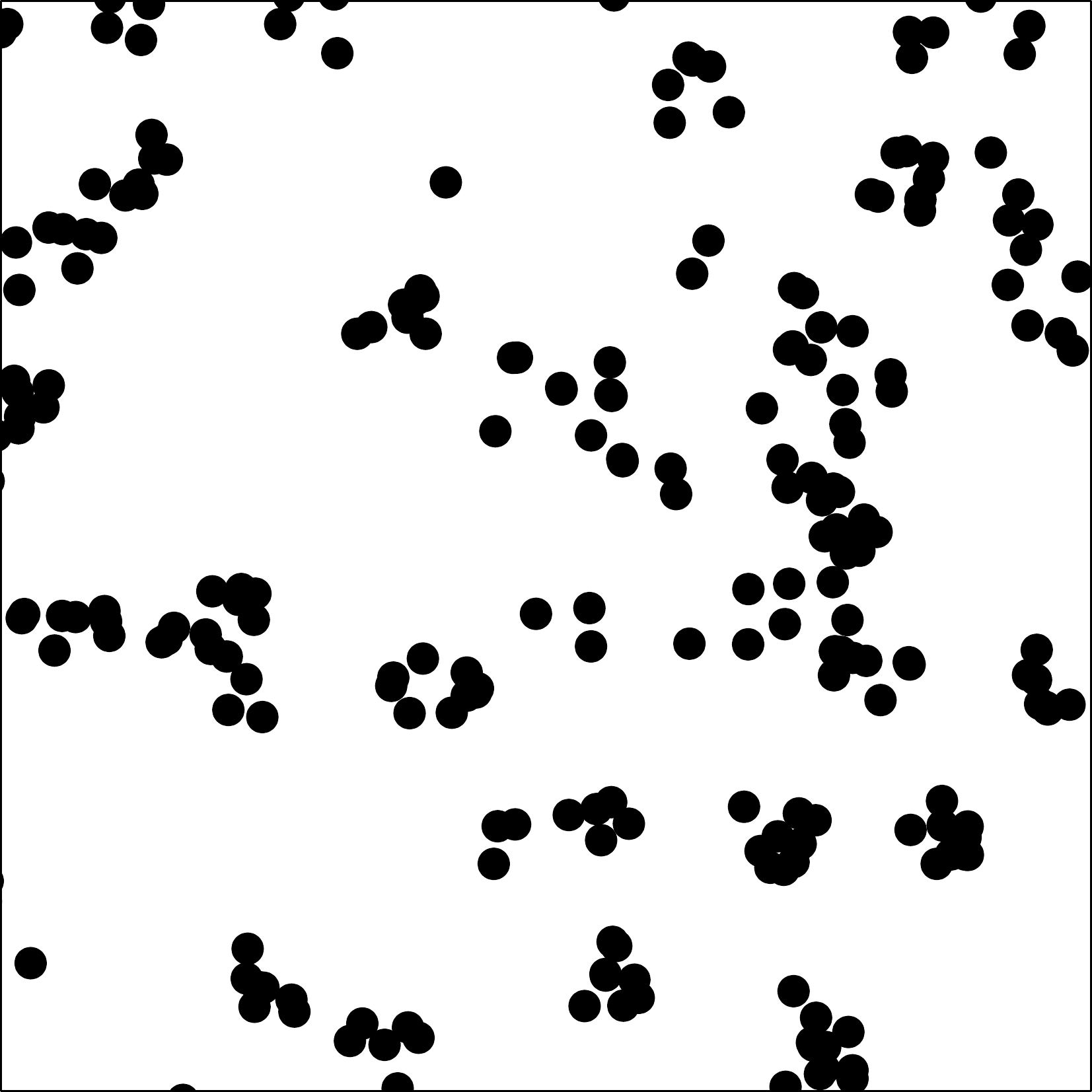}\\
\centerline{Cox point process}
\end{minipage}
\end{center}
\vspace{-2ex}
\caption{\label{blaszcyszyn_f.Lattice1} From left to right : patterns sampled from
a simple  perturbed lattice (cf. Example~\ref{blaszcyszyn_ex:Voronoi_pert_lattice}), Poisson point process and  a doubly stochastic Poisson (Cox) point process, all with the same mean number of points per unit area.} 
  \end{figure*}

\subsection{Second-order statistics}
In this section we restrict ourselves to the stationary setting. 

\subsubsection{Ripley's K-Function}
\label{blaszcyszyn_sss.Ripley}
\index{K function Ripley's@K function (Ripley's)}

One of the most popular functions for the statistical analysis of
spatial homogeneity is {\em Ripley's K-function}
$K:\R_+ \rightarrow \R_+$   defined for stationary point processes (cf.~\cite{stoyetal95}). Assume that $\Phi$ is a stationary point process on $\R^d$ with finite intensity $\lambda=\E{\Phi([0,1]^d)}$. Then
$$K(r)=\frac1{\lambda \nu_d(B)}\EXP \sum_{X_i\in\Phi\cap B}
(\Phi(B_{X_i}(r))-1) \,,$$ where
$\nu_d(B)$ denotes the Lebesgue measure of a bounded Borel set $B\subset \R^d$,
assuming that $\nu_d(B)>0$.
Due to stationarity, the definition does not depend on the choice of
$B$.

The value of $\lambda K(r)$ can be interpreted as the
average number of ``extra'' points  
observed within the distance $r$ from a  randomly chosen (so-called
typical) point.
Campbell's formula from Palm theory of stationary point processes gives a precise
meaning to this statement. Consequently, for a given intensity $\lambda$, the more 
one finds points of a point process located in clusters of
radius $r$, the larger the value of $K(r)$ is, whence a first clustering
comparison method follows. 
\begin{svgraybox}
Larger values $K(r)$ of Ripley's K-function indicate more clustering
``at the cluster-radius scale'' $r$.
\end{svgraybox}
\label{blaszcyszyn_homogeneity}
For the (homogeneous) Poisson  process  $\Pi_\lambda$ on $\R^d$,
 which is often considered as a ``reference  model'' for clustering,
we have $K(r)=\kappa_d r^d$,
where $\kappa_d$ is the volume of the unit ball in $\R^d$.
Note here that   $K(r)$  describes clustering characteristics of the
point process at the (cluster radius) scale $r$. 
Many point processes, which we tend to think that they cluster less or more  than
the Poisson point process, in fact are not comparable in the sense of Ripley's 
K-function (with the given inequality holding for all $r\ge0$, neither, in
consequence, in any stronger sense considered later in this section), as we
can see in the following simple example. 

The following result of D.~Stoyan from 1983
can be considered as a precursor to our theory of clustering
comparison. 
It says that the convex ordering of Ripley's
K-functions implies ordering of variances of number of observed points.
We shall see in Remark \ref{blaszcyszyn_r.moments} that variance bounds give us simple concentration inequalities for
the distribution of the  number of observed points.
These inequalities help to control clustering. We will
develop this idea further in Section~\ref{blaszcyszyn_ss.Moments}
and~\ref{blaszcyszyn_ss.Voids} showing that using moment measures and void
probabilities one can obtain stronger, exponential concentration equalities.
\begin{proposition}[{\cite[Corollary~1]{Stoyan1983inequalities}}]
\label{blaszcyszyn_prop:stoyan1983}
Consider two stationary, isotropic 
point processes $\Phi_1$ and $\Phi_2$ of the same intensity,
with the Ripley's functions $K_1$ and $K_2$, respectively.
If  $K_1\le_{dc} K_2$ i.e., 
$\int_0^\infty f(r)K_1(\md r)\le \int_0^\infty f(r)K_2(\md r)$
for all decreasing convex $f$ then 
$\var(\Phi_1(B))\le \var(\Phi_2(B))$
 for all compact, convex $B$.
\end{proposition}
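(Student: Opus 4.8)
The plan is to express the variance of $\Phi_i(B)$ directly in terms of the intensity $\lambda$ and Ripley's $K$-function, and then read off the claimed inequality from the hypothesis $K_1 \le_{dc} K_2$. First I would recall the second-order Campbell (or Campbell--Mecke) formula for a stationary, isotropic point process $\Phi$ of intensity $\lambda$: for a bounded Borel set $B$,
\[
\Expect{\Phi(B)^2} = \lambda \nu_d(B) + \lambda^2 \int_B \int_B K(\md(\|x-y\|))\,\md x\,\md y
\]
modulo the standard rewriting of the reduced second factorial moment measure through $K$. Equivalently, using the set covariance function $\gamma_B(r) = \int_{\R^d} \ind(x \in B)\,\ind(x + r e \in B)\,\md x$ (which, by isotropy, we may average over directions $e$ on the unit sphere to get a function $\overline{\gamma}_B(r)$ of $r=\|r\|$ alone), one obtains
\[
\var(\Phi(B)) = \lambda \nu_d(B) + \lambda^2 \int_0^\infty \overline{\gamma}_B(r)\, d\big(K(r) - \kappa_d r^d\big)\,,
\]
after subtracting the Poisson value $\lambda^2 \int_0^\infty \overline{\gamma}_B(r)\, d(\kappa_d r^d)$, which equals $\big(\lambda \nu_d(B)\big)^2 - \lambda\nu_d(B)$ by a direct computation. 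The point of this manipulation is that $\var(\Phi_1(B))$ and $\var(\Phi_2(B))$ differ only through the term $\lambda^2\int_0^\infty \overline\gamma_B(r)\,K_i(\md r)$, since $\lambda$ and $B$ are the same for both processes.

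The second step is to verify that, for $B$ compact and convex, the averaged set covariance function $r \mapsto \overline{\gamma}_B(r)$ is a \emph{decreasing, convex} function of $r \ge 0$. Decreasingness is classical (it is essentially the statement that $\nu_d(B \cap (B+re))$ decreases in $|r|$). Convexity is the more delicate point and is precisely where convexity of $B$ enters: for convex bodies the chord-length / covariogram $r\mapsto \nu_d(B\cap(B+re))$ is convex in $r$ on its support — this is a known property of the covariogram of a convex body (it follows, e.g., from the Brunn--Minkowski inequality applied to the sections, or from Matheron's results on the covariogram). Averaging over directions $e$ preserves both decreasingness and convexity, so $\overline{\gamma}_B$ is decreasing and convex.

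Given these two ingredients the conclusion is immediate: since $\overline\gamma_B$ is decreasing and convex and $K_1 \le_{dc} K_2$ means $\int_0^\infty f\,\md K_1 \le \int_0^\infty f\,\md K_2$ for every decreasing convex $f$, we get $\int_0^\infty \overline\gamma_B(r)\,K_1(\md r) \le \int_0^\infty \overline\gamma_B(r)\,K_2(\md r)$, hence $\var(\Phi_1(B)) \le \var(\Phi_2(B))$.

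\textbf{Main obstacle.} The crux is Step two — proving that the averaged set covariance $\overline{\gamma}_B$ of a compact convex set is genuinely convex (not merely decreasing) in the radial variable, and handling the technical bookkeeping of the $dc$-order, which is really an order on the \emph{measures} $K_i(\md r)$ (so one must be slightly careful that $\overline\gamma_B$ need only be decreasing-convex on the relevant interval, and that boundary terms from integration by parts vanish because $\overline\gamma_B$ has bounded support $[0,\diam B]$). Once convexity of the covariogram of a convex body is invoked, the rest is a routine application of the definition of $\le_{dc}$ together with the second-moment identity for stationary point processes.
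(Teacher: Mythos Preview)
The paper does not give its own proof of this proposition; it is stated as a quotation of \cite[Corollary~1]{Stoyan1983inequalities} and no argument follows in the text. So there is nothing in the paper to compare against directly.

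That said, your plan is correct and is essentially Stoyan's original argument. The variance identity reducing $\var(\Phi_i(B))$ to $\lambda\nu_d(B)+\lambda^2\int_0^\infty \overline\gamma_B(r)\,K_i(\md r)$ minus the common Poisson term is exactly the right starting point, and the crux is indeed the convexity of the (isotropised) set covariance $\overline\gamma_B$ for convex $B$. Your justification of that convexity can be made completely elementary: for fixed direction $e$, writing $\gamma_B(re)=\int_{e^\perp}(L(y)-r)_+\,\md y$, where $L(y)$ is the chord length of $B$ through $y$ in direction $e$, exhibits $r\mapsto\gamma_B(re)$ as an integral of the convex functions $r\mapsto(L(y)-r)_+$; averaging over $e$ then gives convexity of $\overline\gamma_B$. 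This chord-length representation is precisely where convexity of $B$ is used (so that each line meets $B$ in a single segment), and it also makes the compact support $[0,\diam B]$ and the vanishing of boundary terms transparent. No appeal to Brunn--Minkowski is needed.
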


\remove{
\begin{example}
\index{lattice process!Ripley's K function of@Ripley's $K$ function of} 
For the stationary square lattice point process
on the plane with intensity $\lambda=1$ 
(cf.~Definition~\ref{blaszcyszyn_def.lattice}), presumably more homogeneous (less
clustering)  than
Poisson point process of the same intensity, 
it holds that $K(r)=0\le \pi r^2$ for $r<1$, but $K(r)=4$  
for $r\in[1,\sqrt 2 )$ which is larger than $\pi r^2$ for $r$
approaching~1 from the right. Similarly, $K(r)=8>\pi r^2$  
for $r=\sqrt2$, $K(r)=12>\pi r^2$ for $r=2$, etc.
We thus see that Ripley's $K$-function, is able to
compare point patterns at different ``user-defined'' cluster-radius
scales~$r$, and that for different values of $r$ we may obtain
opposite in-qualities.
However, as we shall see later (cf. Example~\ref{blaszcyszyn_ex.pert_lattice_dcx})
for some perturbed lattices, including the simple perturbed ones,
it holds that $K(r)\le \kappa_d r^d$ for all $r\ge0$ and thus they cluster less than
the Poisson point process in the sense of Ripley's $K$-function
(and even in a much stronger sense). We will also discuss
point processes clustering more than the Poisson processes in this sense.    
\end{example}
}

{
\begin{exercise}\label{blasz.exer}
\index{lattice process!Ripley's K function of@Ripley's K function of} 
For the stationary square lattice point process
on the plane with intensity $\lambda=1$ 
(cf.~Definition~\ref{blaszcyszyn_def.lattice}), compare $K(r)$ and $\pi r^2$ for $r = 1, \sqrt{2},2$. 
\end{exercise}
}

{From Exercise~\ref{blasz.exer}, one should be able to see that though the square lattice is presumably more homogeneous (less clustering) than the Poisson point process of the same intensity, the differences of the values of their K-functions alternate between strictly positive and strictly negative. However, we shall see later that (cf. Example~\ref{blaszcyszyn_ex.pert_lattice_dcx}) this will not be the case for  some perturbed lattices, including the simple perturbed ones and thus they cluster less than
the Poisson point process in the sense of Ripley's K-function
(and even in a much stronger sense). We will also discuss
point processes clustering more than the Poisson processes in this sense. }

\subsubsection{Pair Correlation Function} 
\label{blaszcyszyn_sss.PairCorrelationFunction}
\index{pair correlation function}
Another useful characteristic for measuring clustering effects in
stationary point processes is the {\em pair correlation function}
$g:\R^d\times \R^d \rightarrow \R_+$.
It is related to the probability of finding a point at a given distance from  another point and can be defined as
$$g(x,y)=\frac{\rho^{(2)}(x,y)}{\lambda^2}\,,$$
where $\lambda=\E{\Phi\left([0,1]^d\right)}$ is the intensity of the point process and
$\rho^{(2)}$ is its {\em joint second-order intensity}; i.e.
the density (if it exists, with respect to the Lebesgue measure) 
of the second-order factorial moment measure $\alpha^{(2)}(\md (x,y))$ 
(cf. Sect.~\ref{blaszcyszyn_ss.Moments}).
%

\begin{svgraybox} 
Similarly as for Ripley's K-function, we can say that
larger values $g(x,y)$  of the pair correlation function indicate more
clustering ``around'' the vector \hbox{$x-y$}.
\end{svgraybox}

For stationary point processes the following relation holds between functions~$\rho$ and~$K$
$$K(r)=\int_{B_0(r)}g(0,y)\,\md y\,,$$
which simplifies to 
$$K(r)=d\nu_d\int_0^rs^{d-1}g(0,s)\,\md s\,,$$
in the case of isotropic processes; cf~\cite[Eq. (4.97),~(4.98)]{stoyetal95}.

For a Poisson point process $\Pi_\lambda$, we have that 
$g(x,y)\equiv 1$.
Again, it is not immediate to find examples of point processes whose pair
correlation functions are ordered for all values of $x,y$. Examples of
such point processes will be provided in the following sections.

{
\begin{exercise}
Show that ordering of pair correlation functions implies ordering of Ripley's K-functions, i.e., for two stationary point processes $\Phi_1,\Phi_2$ with $\rho_1^{(2)}(x,y) \leq \rho_2^{(2)}(x,y)$ for almost all $(x,y)\in \R^{2d}$, it holds that $K_1(r) \leq K_2(r)$ for all $r\geq 0$. 
\end{exercise}
}

{Though Ripley's K-function and the pair-correlation function are very simple to compute, they define only a pre-odering of point processes,  because their equality does not imply equality of the underlying point processes. We shall now present some possible definitions of partial ordering of point processes that capture clustering phenomena.}

\subsection{Moment Measures}
\label{blaszcyszyn_ss.Moments}
Recall that the measure  $\alpha^{k}: \mathcal{B}^{kd} \rightarrow [0,\infty]$ defined
by 
$$\alpha^{k}(B_1\times\cdots\times B_k)
=\EXP \prod_{i=1}^k \Phi(B_i) $$ 
for all (not
necessarily disjoint) bounded Borel sets $B_i$  ($i=1,\ldots, k$)
is called the {\em $k\,$-th order moment
measure} of $\Phi$. \index{moment measure}
 For simple point processes,
the truncation of the measure $\alpha^k$ to the subset
$\{(x_1,\ldots,x_k)\in (\R^{d})^k: x_i\not=x_j,\; \text{for\;} i\not=j\}$
is equal to the {\em $k\,$-th order factorial moment
measure} $\alpha^{(k)}$. \index{moment measure!factorial}
Note that  $\alpha^{(k)}(B\times\cdots\times B)$ expresses the
expected number of $k$-tuples of points of the point process in a
given set $B$. 

\begin{svgraybox} 
In the
class of point processes with some given intensity measure $\alpha=\alpha^1$,
larger values $\alpha^k(B)$ and $\alpha^{(k)}(B)$ of the
(factorial) moment  measures $\alpha^{k}$ and
$\alpha^{(k)}$, respectively, indicate point processes clustering more in $B\subset\R^d$.
\end{svgraybox}
A first argument we can give to support the above statement is
  considered in Exercise~\ref{Blasz.ex.n} below.

%
{
\begin{exercise}\label{Blasz.ex.n}
Show that comparability of $\alpha^{(2)}(B)$ 
for all bounded Borel sets $B$  implies a corresponding inequality for the  pair correlation functions
and hence Ripley's K-functions.
\end{exercise}
}

\begin{remark}
\label{blaszcyszyn_r.moments}
{For a stronger justification of the relationship between moment measures and clustering,} we can use concentration inequalities,
\index{point process!concentration inequality}
 which give upper bounds on the 
probability that the random counting measure $\Phi$ deviates from its
intensity measure $\alpha$.

Smaller deviations
can be interpreted as supportive for spatial homogeneity.
To be more specific,  using Chebyshev's inequality we have 
$$\P \left( |\Phi(B)-\alpha(B)|\ge a \right) \le
(\alpha^2(B)-(\alpha(B))^2)/a^2$$ for all bounded Borel sets $B$, and
$a> 0$.  
Thus, for point processes of the same mean measure,
the second moments or the Ripley's
functions (via  Proposition \ref{blaszcyszyn_prop:stoyan1983}) allow to compare
their clustering.
Similarly, using  Chernoff's bound, we get that
\begin{equation}
\label{blaszcyszyn_e.LT+exp}
\P \left( \Phi(B)-\alpha(B) \ge a \right) \le 
e^{-t(\alpha(B)+a)}\EXP{e^{t\Phi(B)}}
=e^{-t(\alpha(B)+a)}\sum_{k=0}^\infty \frac{t^k}{k!}\alpha^k(B)
\end{equation}
for any $t,a>0$.
Both concentration inequalities 
give smaller upper bounds  for the probability of the 
deviation from the mean (the
upper deviation in the case of Chernoff's bound)
for point processes clustering less in the sense of higher-order
moment measures. We will come back to this idea in 
Propositions~\ref{blaszcyszyn_p.moments_Laplace} and~\ref{blaszcyszyn_prop:conc_ineq} below.
\end{remark}

In Sect.~\ref{blaszcyszyn_sec:results} we will present results, in particular regarding percolation properties of point processes, for which it its enough to be able to compare factorial moment measures of point processes. We shall note casually that restricted to a "nice" class of point processes, the factorial moment measures uniquely determine the point process and hence the ordering defined via comparison of factorial moment measures is actually a partial order on this nice class of point processes.

We now concentrate on comparison to the Poisson point process.
Recall that for a general Poisson point process $\Pi_\Lambda$ we have
$\alpha^{(k)}(\md (x_1,\ldots,x_k))=\Lambda(\md x_1)\ldots\Lambda(\md x_k)$
for all $k\ge1$, where  $\Lambda=\alpha$ is the intensity measure $\Pi_\Lambda$.
In this regard, we define the following  class of point processes clustering
less (or more) than the Poisson point process with the same intensity measure. 

\begin{definition}[$\alpha$-weakly sub-Poisson point process]
\index{point process!sub-Poisson!alpha-weakly@$\alpha$-weakly}
\index{point process!super-Poisson!alpha-weakly@$\alpha$-weakly}
A point process $\Phi$ is said to be {\em weakly sub-Poisson} in the sense of moment measures
({$\alpha$-weakly sub-Poisson} for short) if
\begin{equation}\label{blaszcyszyn_e.alpha-weakly-sub-Poisson}
\EXP \prod_{i=1}^k\Phi(B_i) \le \prod_{i=1}^k \E \Phi(B_i),
\end{equation} for all $k\geq 1$ and all mutually disjoint bounded Borel sets $B_1,\ldots,B_k\subset\R^d$.  
When the reversed inequality in~(\ref{blaszcyszyn_e.alpha-weakly-sub-Poisson}) holds, we
say that $\Phi$ is {\em weakly super-Poisson} in the sense of
moment measures ($\alpha$-weakly super-Poisson for short).
\end{definition}

In other words, $\alpha$-weakly sub-Poisson point processes have
factorial moment measures $\alpha^{(k)}$ 
smaller than those of
the Poisson point process with the same intensity measure.
Similarly, 
 $\alpha$-weakly super-Poisson point processes have
factorial moment measures larger than those of
the Poisson point process with the same intensity measure.
We also remark that the notion of  sub- and super-Poisson distributions 
is used e.g. in quantum physics and denotes distributions 
for which the variance is smaller (respectively larger) than the mean.
Our notion of $\alpha$-weak sub- and super-Poissonianity is consistent with
(and stronger than) this definition. 
In quantum optics, e.g.   sub-Poisson patterns of photons appear
in resonance fluorescence, where laser light gives Poisson statistics of
photons,  while the thermal light gives super-Poisson  patterns of
photons; cf.~\cite{Photon_antibunchin_wikipedia}.

\remove{
\begin{remark}
Recall that moment measures $\alpha^{k}:\mathcal{B}^{kd} \rightarrow [0,\infty]$ of a general
point process can be expressed as non-negative 
combinations of {products of its (lower-dimensional)}  
factorial moment measures  
(cf. \cite[Ex.~5.4.5, p.~143]{DVJI2003}). Consequently,
we get that $\alpha$-weakly sub- (super-)Poisson point processes 
have also  moment measures $\alpha^{k}(\cdot)$ 
smaller (larger) than those of the corresponding Poisson point process.
\end{remark}
}

{
\begin{exercise}
 Show that $\alpha$-weakly sub- (super-) Poisson point processes 
have moment measures $\alpha^{k}$ 
smaller (larger) than those of the corresponding Poisson point process.
\textit{Hint.} Recall that the moment measures $\alpha^{k}:\mathcal{B}^{kd} \rightarrow [0,\infty]$ of a general
point process can be expressed as non-negative 
combinations of products of its (lower-dimensional)
factorial moment measures  
(cf. \cite{DVJI2003} Exercise~5.4.5, p.~143).
\end{exercise}
}

Here is an easy, but important consequence of the latter
observation regarding  Laplace transforms ``in the negative domain'',
i.e. functionals {$\cL_\Phi(-f)$, where
$$\cL_\Phi(f)=\allowbreak
\EXP{\exp\left(-\int_{\R^d} f(x)\,\allowbreak\Phi(\md x)\right)}\,,$$} for
non-negative functions $f$ on $\R^d$, which 
include as a special case the  functional $\EXP e^{t\Phi(B)}$
appearing in the ``upper'' concentration inequality~(\ref{blaszcyszyn_e.LT+exp}).
By Taylor expansion of the 
exponential function at $0$ and the well-known expression of the Laplace
functional of the Poisson point process with intensity measure $\alpha$ which
can be recognised in the right-hand side of~(\ref{blaszcyszyn_e.moments-Laplace}),
the following result is obtained. 
\begin{proposition}
\label{blaszcyszyn_p.moments_Laplace}
Assume that $\Phi$ is a simple point process with locally bounded intensity measure $\alpha$
and consider $f\ge0$. 
If $\Phi$ is $\alpha$-weakly sub-Poisson, then 
\begin{equation}
\label{blaszcyszyn_e.moments-Laplace}
\EXP \exp\left( \int_{\R^d} f(x)\,\Phi(\md x) \right)\le 
\exp \left( \int_{\R^d}(e^{f(x)}-1)\,\alpha(\md x) \right) \,.
\end{equation}
If $\Phi$ is $\alpha$-weakly super-Poisson, then
the reversed inequality is true.   
\end{proposition}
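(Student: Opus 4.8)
The idea is to expand the exponential as a power series, recognise the resulting terms as integrals of tensor powers of $f$ against the moment measures of $\Phi$, and compare these term by term with the analogous expansion for the Poisson point process $\Pi_\alpha$ with intensity measure $\alpha$.

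First I would observe that, since $f\ge 0$, monotone convergence together with the very definition of the $k$-th order moment measure gives
\begin{align*}
\E\Bigl[\exp\Bigl(\int_{\R^d} f(x)\,\Phi(\md x)\Bigr)\Bigr]
&=\sum_{k=0}^{\infty}\frac{1}{k!}\,\E\Bigl[\Bigl(\int_{\R^d} f(x)\,\Phi(\md x)\Bigr)^{k}\Bigr]\\
&=\sum_{k=0}^{\infty}\frac{1}{k!}\int_{(\R^d)^{k}} f^{\otimes k}\,\md\alpha^{k},
\end{align*}
where $f^{\otimes k}(x_1,\dots,x_k)=f(x_1)\cdots f(x_k)$ and $\alpha^{k}$ is the $k$-th order moment measure of $\Phi$ (with $\alpha^{0}:=1$); since every term is nonnegative, no integrability assumption is needed to justify these interchanges. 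Running exactly the same computation for $\Pi_\alpha$ and using the classical expression for its Laplace functional, which in the present notation reads $\cL_{\Pi_\alpha}(-f)=\exp\bigl(\int_{\R^d}(e^{f(x)}-1)\,\alpha(\md x)\bigr)$, identifies the right-hand side of~(\ref{blaszcyszyn_e.moments-Laplace}) with $\sum_{k\ge 0}\frac{1}{k!}\int_{(\R^d)^{k}} f^{\otimes k}\,\md\alpha^{k}_{\Pi_\alpha}$, where $\alpha^{k}_{\Pi_\alpha}$ denotes the $k$-th order moment measure of $\Pi_\alpha$.

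It then remains to compare the two series term by term. If $\Phi$ is $\alpha$-weakly sub-Poisson, then, as noted just after the definition of $\alpha$-weak sub-Poissonianity (write each $\alpha^{k}$ as a nonnegative combination of products of lower-order factorial moment measures $\alpha^{(j)}$, cf.~\cite{DVJI2003}, and apply~(\ref{blaszcyszyn_e.alpha-weakly-sub-Poisson})), one has $\alpha^{k}\le\alpha^{k}_{\Pi_\alpha}$ as measures on $(\R^d)^{k}$, for every $k$. Because $f^{\otimes k}\ge 0$, this yields $\int f^{\otimes k}\,\md\alpha^{k}\le\int f^{\otimes k}\,\md\alpha^{k}_{\Pi_\alpha}$ for each $k$, and summing against the weights $1/k!$ gives~(\ref{blaszcyszyn_e.moments-Laplace}). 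If $\Phi$ is instead $\alpha$-weakly super-Poisson all these inequalities reverse, giving the reversed inequality; in this case both sides are allowed to equal $+\infty$, and the inequality holds trivially when the right-hand side is infinite.

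The step requiring the most care is not any of the displays above but the passage from the defining inequality~(\ref{blaszcyszyn_e.alpha-weakly-sub-Poisson}), which only concerns \emph{disjoint} bounded Borel sets, to the full ordering $\alpha^{k}\le\alpha^{k}_{\Pi_\alpha}$ of the (non-factorial) moment measures. This is where simplicity of $\Phi$ is used --- so that each factorial moment measure $\alpha^{(j)}$ is carried off the diagonals and hence is determined by its values on products of disjoint sets --- together with the combinatorial expansion of moment measures in terms of factorial ones; it is precisely the content of the exercise on moment measures stated above, which I would take as given. Everything else, namely the power-series expansion, the monotone-convergence interchanges, and recognising the Poisson Laplace functional on the right-hand side of~(\ref{blaszcyszyn_e.moments-Laplace}), is routine bookkeeping.
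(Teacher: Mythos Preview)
Your proposal is correct and follows essentially the same approach as the paper: the paper states that the result is obtained ``by Taylor expansion of the exponential function at $0$ and the well-known expression of the Laplace functional of the Poisson point process with intensity measure $\alpha$ which can be recognised in the right-hand side of~(\ref{blaszcyszyn_e.moments-Laplace}),'' and your argument is a careful unfolding of precisely this sketch, including the appeal to the exercise on ordering of (non-factorial) moment measures that the paper records just before the proposition.
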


The notion of weak sub(super)-Poissonianity is closely related to 
negative and positive association of point processes, as we shall see in
Sect.~\ref{blaszcyszyn_ss.association} below.

\subsection{Void Probabilities}
\label{blaszcyszyn_ss.Voids}
The celebrated R\'enyi theorem says that  the void
probabilities $v(B)=\P(\Phi(B)=0)$ of point processes,
\index{void probability}
evaluated for all bounded Borel Sets $B$ characterise the distribution of a
simple~point process. They also allow an easy comparison of
clustering properties of point processes by  the following interpretation:
a point process having smaller void probabilities 
has less chance to create a particular hole
(absence of points in a given region).  
\begin{svgraybox}
Larger void probabilities indicate point processes with stronger
clustering. Using void probabilities in the study of clustering  is complementary  to the comparison of moments.
\end{svgraybox}
\begin{remark}
\label{blaszcyszyn_r.moments-}
An easy 
way to see the complementarity of  voids and moment measures 
consists in using again  Chernoff's bound 
to obtain the following ``lower'' concentration inequality 
\index{point process!concentration inequality}
(cf. Remark~\ref{blaszcyszyn_r.moments})
\begin{equation}
\label{blaszcyszyn_e.LT-exp}
\P \left( \alpha(B)-\Phi(B)\ge a \right) \le
e^{t(\alpha(B)-a)}\EXP{e^{-t\Phi(B)}},
\end{equation}
which holds for any $t,a>0$, 
and  noting that 
$$\EXP e^{-t\Phi(B)} = \sum_{k=0}^\infty
e^{-tk}\P \left(\Phi(B)=k \right) =\P \left( \Phi'(B)=0 \right)=v'(B)$$
is the void probability of the point process $\Phi'$ obtained from
$\Phi$ by independent thinning with retention probability
$1-e^{-t}$. It is not difficult to show that ordering of void
probabilities of simple point processes is preserved by independent thinning
(cf.~\cite{dcx-perc})
and thus the bound in~(\ref{blaszcyszyn_e.LT-exp}) is smaller for point processes 
less clustering in the latter sense.
We will come back to this idea  in Propositions~\ref{blaszcyszyn_p.voids_Laplace}
and~\ref{blaszcyszyn_prop:conc_ineq}.
Finally, note that $\lim_{t\to\infty}\EXP e^{-t\Phi(B)}=v(B)$ and
thus, in conjunction with what was said above, 
comparison of void probabilities is equivalent to the
comparison of one-dimensional 
Laplace transforms of point processes for non-negative arguments.
\end{remark}

In Sect.~\ref{blaszcyszyn_sec:results}, we will present results, in particular regarding percolation properties, for which it is enough to be able to compare void probabilities of point processes. {Again, because of R\'{e}nyi's theorem, we have that ordering defined by void probabilities is a partial order on the space of simple point processes.}

\subsubsection{$v$-Weakly Sub(Super)-Poisson Point Processes}
\index{point process!sub-Poisson!v-weakly@$v$-weakly}
\index{point process!super-Poisson!v-weakly@$v$-weakly}
Recall that a Poisson point process $\Pi_\Lambda$ can be characterised as having void probabilities of the form
$v(B)=\exp(-\Lambda(B))$, with $\Lambda$ being the intensity
measure of~$\Phi$. 
In this regard, we define the following  classes of point processes clustering
less (or more)
than the Poisson point process with the same intensity measure. 
\begin{definition}[$v$-weakly sub(super)-Poisson point process]
A point process $\Phi$ is said to be {\em weakly sub-Poisson} in the sense of void
  probabilities ({$v$-weakly sub-Poisson} for short) if
\begin{equation}\label{blaszcyszyn_e.nu-weakly-sub-Poisson} \P\left( \Phi(B)=0 \right) \le
e^{- \E \Phi(B)}
\end{equation}
for all Borel sets $B \subset \R^d$.  When the reversed inequality
in~(\ref{blaszcyszyn_e.nu-weakly-sub-Poisson}) holds, we say that $\Phi$
is {\em weakly super-Poisson} in the sense of void probabilities
({$v$-weakly super-Poisson} for short).
\end{definition}

In other words, $v$-weakly sub-Poisson point processes have
void probabilities smaller than those of
the Poisson point process with the same intensity measure. Similarly, 
 $v$-weakly super-Poisson point processes have
void probabilities larger than those of
the Poisson point process with the same intensity measure. 

\begin{example}\label{exe.Cox}
\index{Cox process!v-weakly super-Poisson@$v$-weakly super-Poisson}
It is easy to see by Jensen's inequality that all Cox point processes are
  $v$-weakly super-Poisson.
\end{example}

By using a coupling argument as in Remark \ref{blaszcyszyn_r.moments-}, we can derive an analogous result as in Proposition~\ref{blaszcyszyn_p.moments_Laplace} for $v$-weakly sub-Poisson point processes.
\begin{proposition}[{~\upshape{\bf{\cite{dcx-perc}}}}]
\label{blaszcyszyn_p.voids_Laplace}
Assume that $\Phi$ is a simple point process with locally bounded intensity measure $\alpha$. 
Then  $\Phi$ is $v$-weakly sub-Poisson if and only if 
{ {\upshape{(\ref{blaszcyszyn_e.moments-Laplace}}}) holds
for all functions $f\ge 0$}.
\end{proposition}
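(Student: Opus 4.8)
The plan is to read the claim off the coupling, already exploited in Remark~\ref{blaszcyszyn_r.moments-}, between the Laplace functional $\cL_\Phi$ and the void probabilities of independent thinnings; this converts the comparison of void probabilities that defines $v$-weak sub-Poissonianity into the asserted comparison $\cL_\Phi(f)\le\cL_{\Pi_\alpha}(f)$ ($f\ge0$) of Laplace functionals, in the spirit of Proposition~\ref{blaszcyszyn_p.moments_Laplace}. The only non-routine ingredient is the fact, recalled in Remark~\ref{blaszcyszyn_r.moments-} and attributed to~\cite{dcx-perc}, that the pointwise ordering of void probabilities of simple point processes is preserved by independent thinning, including under inhomogeneous retention functions $p(\cdot)$; I expect that to be the crux, even though it enters here as a black box.

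First I would record the thinning identity. For $f\ge0$ set $p_f(x):=1-e^{-f(x)}\in[0,1]$ and let $\Phi_f$ denote the independent thinning of $\Phi$ with retention function $p_f$. Conditionally on $\Phi$, the product $\prod_{x\in\Phi}e^{-f(x)}$ is exactly the probability that every point of $\Phi$ is suppressed, whence
$$\cL_\Phi(f)=\EXP{\exp\Bigl(-\int_{\R^d}f(x)\,\Phi(\md x)\Bigr)}=\P\bigl(\Phi_f=\emptyset\bigr)=v_{\Phi_f}(\R^d),$$
and $\Phi_f$ has intensity measure $p_f(x)\,\alpha(\md x)$, of total mass $\int_{\R^d}(1-e^{-f(x)})\,\alpha(\md x)$. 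Applying this to $\Pi_\alpha$, whose independent $p_f$-thinning is again Poisson (with the thinned intensity), yields
$$\cL_{\Pi_\alpha}(f)=v_{(\Pi_\alpha)_f}(\R^d)=\exp\Bigl(-\int_{\R^d}(1-e^{-f(x)})\,\alpha(\md x)\Bigr),$$
which is the right-hand side of the inequality to be established.

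For the ``only if'' direction, assume $\Phi$ is $v$-weakly sub-Poisson, i.e. $v_\Phi(B)\le e^{-\alpha(B)}=v_{\Pi_\alpha}(B)$ for all Borel $B$. Fix $f\ge0$, and thin $\Phi$ and $\Pi_\alpha$ \emph{with the same} retention function $p_f$ on a common probability space (same independent marks). By stability of the void-probability order under independent thinning (Remark~\ref{blaszcyszyn_r.moments-},~\cite{dcx-perc}) one gets $v_{\Phi_f}(B)\le v_{(\Pi_\alpha)_f}(B)$ for all Borel $B$; taking $B=\R^d$ and invoking the two identities of the previous step gives $\cL_\Phi(f)\le\cL_{\Pi_\alpha}(f)$. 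Since $f\ge0$ was arbitrary, this is the desired inequality.

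For the converse, suppose the inequality holds for every $f\ge0$, and fix a bounded Borel set $B$. With $f=t\mathbf{1}_B$, $t>0$, it reads $\EXP{e^{-t\Phi(B)}}\le\exp\bigl(-(1-e^{-t})\alpha(B)\bigr)$; letting $t\to\infty$, the left-hand side decreases to $\P(\Phi(B)=0)$ (monotone convergence, since $e^{-t\Phi(B)}\downarrow\mathbf{1}\{\Phi(B)=0\}$) and the right-hand side tends to $e^{-\alpha(B)}$, so $\P(\Phi(B)=0)\le e^{-\alpha(B)}$. For an arbitrary Borel set $B$ I would take bounded $B_n\uparrow B$, note $\{\Phi(B_n)=0\}\downarrow\{\Phi(B)=0\}$ and $\alpha(B_n)\uparrow\alpha(B)$, and pass to the limit; this yields $v$-weak sub-Poissonianity and closes the equivalence. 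Incidentally, only test functions of the form $t\mathbf{1}_B$ enter this direction, so the characterisation could equivalently be phrased with such functions — precisely the one-dimensional Laplace-transform comparison foreshadowed in Remark~\ref{blaszcyszyn_r.moments-}.
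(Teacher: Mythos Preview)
Your argument is correct and is exactly the approach the paper indicates: the sentence preceding the proposition points to the thinning coupling of Remark~\ref{blaszcyszyn_r.moments-}, and the proposition itself is simply cited from~\cite{dcx-perc} without a self-contained proof. Your write-up fleshes out that hint faithfully---the identity $\cL_\Phi(f)=v_{\Phi_f}(\R^d)$ for the $p_f$-thinned process, together with preservation of the void-probability order under independent thinning, gives the forward direction, and the $f=t\mathbf{1}_B$, $t\to\infty$ limit gives the converse.

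One remark worth making explicit (you hint at it by calling the stability result the ``crux''): the black box you invoke---that $v_{\Phi_1}\le v_{\Phi_2}$ is preserved by independent thinning with an \emph{arbitrary} retention function $p(\cdot)$---is, via the substitution $p=1-e^{-f}$, essentially the statement that void-probability ordering implies $\cL_{\Phi_1}(f)\le\cL_{\Phi_2}(f)$ for all $f\ge0$. So the proposition is the special case $\Phi_2=\Pi_\alpha$ of that black box, and your derivation is a clean reduction rather than an independent proof. This is not a flaw---the paper does the same, deferring the substance to~\cite{dcx-perc}---but it is worth being aware that the constant-retention case in Remark~\ref{blaszcyszyn_r.moments-} alone (i.e.\ the inequality for $f=t\mathbf{1}_B$) does \emph{not} suffice to reach general $f$ by naive approximation, since for simple $f=\sum_i t_i\mathbf{1}_{B_i}$ one cannot pull the product $\prod_i e^{-t_i\Phi(B_i)}$ apart inside the expectation. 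The inhomogeneous-thinning version is genuinely needed.
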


\subsubsection{Combining Void Probabilities and Moment Measures}
We have already explained why the comparison of void probabilities and moment measures are
in some sense complementary.
Thus, it is {natural} to combine them, whence the following definition is obtained.
\begin{definition}[Weakly sub- and super-Poisson point process]
\index{point process!sub-Poisson!weakly}
\index{point process!super-Poisson!weakly}
We say that $\Phi$ is {\em weakly sub-Poisson}
if $\Phi$ is $\alpha$-weakly sub-Poisson and $v$-weakly sub-Poisson.
Weakly super-Poisson point processes are defined in the same way.
\end{definition}

The following remark is immediately obtained from 
Propositions~\ref{blaszcyszyn_p.moments_Laplace} and~\ref{blaszcyszyn_p.voids_Laplace}.
\begin{remark}
\label{blaszcyszyn_p.associated_Laplace_remark}
Assume that $\Phi$ is a simple point process with locally bounded intensity measure~$\alpha$.
If $\Phi$ is weakly sub-Poisson then~(\ref{blaszcyszyn_e.moments-Laplace})
holds for any $f$ of constant sign ($f\ge0$ or $f\le0$).
If $\Phi$ is weakly super-Poisson, 
then in (\ref{blaszcyszyn_e.moments-Laplace}) the reversed inequality holds for such $f$.
\end{remark}

\begin{example}
\label{blaszcyszyn_ex.Det-Perm-weakly}
\index{determinantal process!weakly sub-Poisson}
\index{permanental process!weakly super-Poisson}
It has been shown in~\cite{dcx-clust} that determinantal and permanental 
point process {(with trace-class integral
  kernels)} are weakly sub-Poisson and weakly super-Poisson,
respectively.  
\end{example}

Other examples (admitting even stronger comparison properties) will be given 
in Sect.~\ref{blaszcyszyn_ss.association} and~\ref{blaszcyszyn_ss.dcx}.

As mentioned earlier, using the ordering of Laplace functionals of weakly sub-Poisson point processes, we can extend the concentration inequality for Poisson 
point processes to this class of point processes.  In the discrete setting, a similar result is proved for negatively associated random variables in
\cite{Dubhashi96}. A more general concentration inequality for Lipschitz functions is known in the case of determinantal point processes (\cite{Pemantle11}).   
\index{point process!sub-Poisson!concentration inequality}
\index{point process!concentration inequality}


\begin{proposition}
\label{blaszcyszyn_prop:conc_ineq}
Let $\Phi$ be a simple stationary point process with unit intensity which is weakly
sub-Poisson, and let $B_n\subseteq\R^d$ be a Borel set of Lebesgue measure
$n$. Then, for any $1/2<a<1$ there exists an integer $n(a)\geq 1$ such that for $n\ge n(a)$
$$\P(|\Phi(B_n) - n| \geq n^{a} ) \leq 2\exp \left(-n^{2a-1}/9\right).$$
\end{proposition}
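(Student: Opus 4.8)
The plan is to derive the two-sided deviation bound from the Laplace-transform inequality for weakly sub-Poisson processes (Remark~\ref{blaszcyszyn_p.associated_Laplace_remark}), applied to constant test functions, via the standard Chernoff argument already sketched in Remarks~\ref{blaszcyszyn_r.moments} and~\ref{blaszcyszyn_r.moments-}. Write $N = \Phi(B_n)$, so $\E N = n$ by stationarity and unit intensity. For the upper tail, fix $t>0$, take $f \equiv t$ in~(\ref{blaszcyszyn_e.LT+exp}): since $\Phi$ is $\alpha$-weakly sub-Poisson, Proposition~\ref{blaszcyszyn_p.moments_Laplace} gives $\E e^{tN} \le \exp\big((e^t-1)n\big)$, hence
\begin{equation*}
\P(N - n \ge n^a) \le \exp\big(-t(n+n^a) + (e^t-1)n\big).
\end{equation*}
For the lower tail, use $f \equiv -t$: the $v$-weak sub-Poisson property together with Proposition~\ref{blaszcyszyn_p.voids_Laplace} (or directly Remark~\ref{blaszcyszyn_p.associated_Laplace_remark}) yields $\E e^{-tN} \le \exp\big((e^{-t}-1)n\big)$, and~(\ref{blaszcyszyn_e.LT-exp}) gives $\P(n - N \ge n^a) \le \exp\big(t(n-n^a) + (e^{-t}-1)n\big)$.

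Next I would optimise over $t$. Both exponents have the form $n\,\psi(\pm t) \mp t n^a$ where $\psi(s)=e^s-1$; the natural near-optimal choice is $t = n^{a-1}$, which is small for $a<1$ and large $n$. Substituting and Taylor-expanding $e^{\pm t}-1 = \pm t + t^2/2 + O(t^3)$, the linear terms in the exponent cancel the $\mp t n$ contribution, leaving
\begin{equation*}
n\,\Big(\tfrac{t^2}{2} + O(t^3)\Big) - t\,n^a = \tfrac12 n^{2a-1} + O(n^{3a-2}) - n^{2a-1} = -\tfrac12 n^{2a-1} + O(n^{3a-2}),
\end{equation*}
and similarly for the lower tail. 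Since $a<1$ we have $3a-2 < 2a-1$, so the error term is of smaller order; choosing $n(a)$ large enough that the $O(n^{3a-2})$ correction is dominated — concretely, so that the whole exponent is at most $-n^{2a-1}/9$ — handles both tails. Adding the two bounds and using $2\exp(-n^{2a-1}/9)$ as the common envelope gives the claim. The condition $a>1/2$ is exactly what makes $n^{2a-1}\to\infty$, so the bound is non-trivial (and it is what lets one later invoke Borel--Cantelli along a subsequence, though that is not needed here).

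The only mild subtlety — and the one place to be careful rather than the ``hard part'' — is the bookkeeping of constants: the crude constant $1/9$ in the exponent must absorb both the factor $1/2$ from the quadratic term and the lower-order Taylor remainder uniformly for all $n \ge n(a)$, and one should check that the same threshold $n(a)$ works for both tails (take the larger of the two). One could instead avoid Taylor expansion entirely by using the exact Chernoff optimiser for a Poisson-type bound, $t = \log(1 + n^{a-1})$ for the upper tail, giving the exponent $-n\,h(n^{a-1})$ with $h(x) = (1+x)\log(1+x) - x \sim x^2/2$; this is cleaner but amounts to the same estimate. Either way, no genuine obstacle arises: the content is entirely in the Laplace-transform comparison established earlier, and the proposition is essentially a packaging of Bernstein's inequality for the Poisson distribution, transported to weakly sub-Poisson processes.
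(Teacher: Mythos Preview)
Your proposal is correct and follows essentially the same route as the paper: Chernoff's bound combined with the Laplace-transform comparison of Propositions~\ref{blaszcyszyn_p.moments_Laplace} and~\ref{blaszcyszyn_p.voids_Laplace}, then optimisation in the exponential parameter. The paper writes things multiplicatively with $z=e^t$ and takes the exact optimiser $z=(n\pm n^a)/n$ (your alternative $t=\log(1+n^{a-1})$), deferring the final bound $\exp(-n^{2a-1}/9)$ to \cite[Lemmas~1.2 and~1.4]{Penrose03}, whereas you carry out the Taylor expansion by hand; the content is the same.
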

\remove{
\begin{proof}
By using Markov's inequality along with Propositions~\ref{blaszcyszyn_p.moments_Laplace} and ~\ref{blaszcyszyn_p.voids_Laplace}, we have the following two inequalities. 
It holds that 
\begin{eqnarray*}
\P(\Phi(B_n) \geq n + n^a) & \leq & z^{-n - n^a}\E(z^{\Phi(B_n)}) \leq  z^{-n-n^a}e^{n(z-1)}, \, \, \, z \geq 1. \\
\P(\Phi(B_n) \geq n - n^a) & \leq & z^{-n + n^a}\E(z^{\Phi(B_n)}) \leq  z^{-n+n^a}e^{n(z-1)}, \, \, \, z \leq 1.
\end{eqnarray*}
Putting $z= (n+n^a)/n$ and  $z= (n-n^a)/n$, respectively, the right-hand sides of
the above inequalities  can be upper-bound by $\exp\left[-n^{2a-1}/9\right]$ for sufficiently large $n$
(cf. \cite[Lemmas 1.2 and 1.4]{Penrose03}).
This completes the proof.
\qed
\end{proof}
}

{
\begin{exercise}
Prove Proposition~\ref{blaszcyszyn_prop:conc_ineq}. \textit{Hint.} Use Markov's inequality, Propositions~\ref{blaszcyszyn_p.moments_Laplace} and ~\ref{blaszcyszyn_p.voids_Laplace} along with the bounds for the Poisson case known from \cite[Lemmas 1.2 and 1.4]{Penrose03}. 
\end{exercise}
}

Note that the bounds we have suggested to use are the ones corresponding to the Poisson point process. For specific weakly sub-Poisson point processes, one expects an improvement on these bounds.


\subsection{Positive and Negative Association}
\label{blaszcyszyn_ss.association}
Denote covariance of (real-valued) 
random variables $X,Y$ by $\COV{X,Y}=\EXP{XY}-\E X\E Y$.
\begin{definition}[(Positive) association of point processes]
\index{point process!associated|seealso{association}}
\index{point process!associated}
\label{blaszcyszyn_d.p-association}
A point process  $\Phi$ 
is called  {\em associated} if 
\begin{equation}\label{blaszcyszyn_eqn:association}
\COV{f(\Phi(B_1),\ldots,\Phi(B_k)),g(\Phi(B_1),\ldots,\Phi(B_k))}\allowbreak
\geq 0
\end{equation}
for any finite collection of bounded Borel sets $B_1,\ldots,B_k\subset\R^d$ and
$f,g : \R^k \to [0,1]$ (componentwise) increasing functions; cf.~\cite{BurtonWaymire1985}. 
\end{definition}

The property considered in~\eqref{blaszcyszyn_eqn:association} is also called {\em
  positive association}, or the {\em FKG property}.
The theory for the opposite property is more tricky, cf.~\cite{pemantle00},
but one can define it as follows.
\begin{definition}[Negative association]
\index{point process!negatively associated|seealso{negative association}}
\index{point process!negatively associated}
\index{negative association}
\label{blaszcyszyn_d.n-association}
A point process $\Phi$ is called {\em negatively associated} if
$$\COV{f(\Phi(B_1),\ldots,\Phi(B_k)),\allowbreak g(\Phi(B_{k+1}),\ldots,\Phi(B_l))}\allowbreak \le 0$$
for any finite collection of bounded Borel sets $B_1,\ldots,B_l\subset\R^d$ 
such that $(B_1\cup\dots\cup  B_k)\cap(B_{k+1}\cup\dots \cup
B_{l})=\emptyset$ and $f,g$ increasing functions.
\end{definition}

Both definitions can be straightforwardly extended to
arbitrary random measures, where one additionally assumes that $f,g$
are continuous and increasing functions. Note that the notion of
association or negative association of point processes does not induce
any ordering on the space of point processes. Though, association or negative association have not been studied from the point of view of stochastic ordering, it has been widely used to prove central limit theorems for random fields (see \cite{Bulinski_Spodarev2012central}).

\begin{svgraybox}
Positive and negative association can be seen as clustering comparison
to Poisson point process.
\end{svgraybox}

The following result, supporting the above statement,
has been proved in~\cite{dcx-clust}. It will be strengthened in the next section (see Proposition~\ref{blaszcyszyn_newProp})
\begin{proposition}
\label{blaszcyszyn_p.negassoc-weak}
\index{negative association}
\index{point process!sub-Poisson!weakly}
\index{association}
\index{point process!super-Poisson!weakly}
A negatively associated, simple point process with locally bounded intensity measure 
is weakly sub-Poisson.
A (positively) associated point process with a diffuse locally bounded intensity measure 
is weakly super-Poisson.
\end{proposition}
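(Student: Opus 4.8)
The plan is to verify the two defining inequalities of weak sub-Poissonianity separately, using negative association for each, and then dualise to get the positively associated case. Recall that "$\Phi$ weakly sub-Poisson" means both $\alpha$-weakly sub-Poisson (inequality~(\ref{blaszcyszyn_e.alpha-weakly-sub-Poisson})) and $v$-weakly sub-Poisson (inequality~(\ref{blaszcyszyn_e.nu-weakly-sub-Poisson})). So there are really two things to prove for a negatively associated $\Phi$.

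First I would do the moment-measure inequality. Fix mutually disjoint bounded Borel sets $B_1,\dots,B_k$. Apply the definition of negative association with the split $\{B_1\}$ versus $\{B_2,\dots,B_k\}$ and the increasing functions $f(x)=x$ and $g(x_2,\dots,x_k)=\prod_{i=2}^k x_i$ (these are indeed componentwise increasing on the nonnegative integers); this gives $\EXP{\Phi(B_1)\prod_{i=2}^k\Phi(B_i)}\le \E\Phi(B_1)\,\EXP{\prod_{i=2}^k\Phi(B_i)}$. Iterating this peeling-off argument $k-1$ times yields $\EXP{\prod_{i=1}^k\Phi(B_i)}\le\prod_{i=1}^k\E\Phi(B_i)$, which is exactly~(\ref{blaszcyszyn_e.alpha-weakly-sub-Poisson}). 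One minor point to address: the functions $x\mapsto x$ and products thereof are unbounded, whereas Definition~\ref{blaszcyszyn_d.n-association} is stated for increasing (continuous, in the random-measure formulation) functions; this is handled by a routine truncation-and-monotone-convergence argument, approximating $\Phi(B_i)$ by $\Phi(B_i)\wedge N$ and letting $N\to\infty$, the local boundedness of $\alpha$ guaranteeing finiteness of the limits.

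Second, the void-probability inequality. Fix a bounded Borel set $B$ and partition it into $m$ disjoint pieces $B=\bigsqcup_{j=1}^m B_{j}$. Write $\indicate{\Phi(B)=0}=\prod_{j=1}^m\indicate{\Phi(B_j)=0}$. Now each $\indicate{\Phi(B_j)=0}$ is a \emph{decreasing} function of $\Phi(B_j)$, so I instead apply negative association to the decreasing functions (equivalently, use that negative association of $(\Phi(B_j))_j$ passes to decreasing functions, or apply the definition to $f,g$ of the form $c-(\text{increasing})$); peeling off one factor at a time as above gives $\P(\Phi(B)=0)=\EXP{\prod_{j=1}^m\indicate{\Phi(B_j)=0}}\le\prod_{j=1}^m\P(\Phi(B_j)=0)$. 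It remains to let the mesh of the partition go to zero and show $\prod_{j=1}^m\P(\Phi(B_j)=0)\to e^{-\alpha(B)}$. For this I would use that for a simple point process with locally bounded (hence, on refinement, "infinitesimally small per cell") intensity measure, $\P(\Phi(B_j)=0)=1-\alpha(B_j)+o(\alpha(B_j))$ along a refining sequence of partitions, so $\log\prod_j\P(\Phi(B_j)=0)=\sum_j\log(1-\alpha(B_j)+o(\alpha(B_j)))\to-\alpha(B)$; the standard way to make "$o$" uniform is to invoke simplicity via the fact that $\sum_j\P(\Phi(B_j)\ge1)\to\alpha(B)$ while $\sum_j\P(\Phi(B_j)\ge2)\to0$ on a refining null-array, which is where I expect the only real work to be. This gives~(\ref{blaszcyszyn_e.nu-weakly-sub-Poisson}).

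For the positively associated case, the argument is formally dual: association gives $\EXP{\prod_{i=1}^k\Phi(B_i)}\ge\prod_{i=1}^k\E\Phi(B_i)$ by the same peeling argument (now the covariance inequalities point the other way, and association does not require the disjoint-block restriction), yielding $\alpha$-weak super-Poissonianity; and for void probabilities, $\P(\Phi(B)=0)=\EXP{\prod_j\indicate{\Phi(B_j)=0}}\ge\prod_j\P(\Phi(B_j)=0)\to e^{-\alpha(B)}$ using that indicators of decreasing events are still positively correlated under association. This is where the hypothesis enters that the intensity measure be \emph{diffuse}: the limit identification $\prod_j\P(\Phi(B_j)=0)\to e^{-\alpha(B)}$ along a refining null-array of partitions requires $\alpha$ to have no atoms (an atom would force some $\P(\Phi(B_j)=0)$ to stay bounded away from $1$), and one also needs the process to be such that the null-array estimates hold — diffuseness is precisely what makes the cell-wise intensities vanish in the limit. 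I expect this limiting/null-array step to be the main obstacle; everything else is the bookkeeping of iterated covariance inequalities plus a truncation argument.
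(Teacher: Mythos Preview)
Your proposal is correct and follows the standard route; the paper itself does not prove this proposition in-text but defers to~\cite{dcx-clust}, leaving the $\alpha$-weak half as an exercise immediately afterwards. Your peeling-of-covariances argument for the moment-measure inequality and your partition-and-refine argument for the void-probability inequality (with the null-array fact $\sum_j\P(\Phi(B_j)\ge1)\to\alpha(B)$ for simple $\Phi$, respectively the Markov bound $\P(\Phi(B_j)=0)\ge 1-\alpha(B_j)$ together with diffuseness of $\alpha$ in the associated case) are exactly what is expected.

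One small sharpening: in the positively associated case you do \emph{not} need any null-array property of the process itself, only the measure-theoretic fact that for diffuse~$\alpha$ one can refine so that $\max_j\alpha(B_j)\to0$, whence $\prod_j(1-\alpha(B_j))\to e^{-\alpha(B)}$; this is why simplicity of~$\Phi$ is not assumed there. Conversely, in the negatively associated case simplicity is what drives $\sum_j\E[(\Phi(B_j)-1)^+]\to0$ via dominated convergence (each point eventually isolated in its own cell), and this works even if $\alpha$ has atoms, since $\log(1-p_j)\le -p_j$ always.
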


{
\begin{exercise}
Prove that a (positively) associated point process with a diffuse locally bounded intensity measure is $\alpha$-weakly super-Poisson. Show a similar statement for negatively associated point processes as well.
\end{exercise}
}

\begin{example}
\index{Poisson cluster process!association}
From~\cite[Th.~5.2]{BurtonWaymire1985}, we know that any {\em
  Poisson cluster point process} is associated.
 This is a generalisation of the 
perturbation approach of a Poisson point process $\Phi$ considered in~(\ref{blaszcyszyn_defn:perturbed_pp})
having the form $\Phi^{\text{cluster}}=\sum_{X\in\Phi}(X+\Phi_X)$ with $\Phi_X$ being
arbitrary i.i.d. (cluster) point processes. 
In particular, the 
\index{Neyman-Scott process!association}
Neyman-Scott point process (cf. Example~\ref{blaszcyszyn_ex.NS-PP})
is associated. Other examples of
associated point processes given in~\cite{BurtonWaymire1985} are Cox point processes with random intensity measures being associated.
\end{example}

\remove{
\begin{example}
\index{association}

The binomial point process (cf. Example~\ref{blaszcyszyn_ex.Binomial}) is negatively
associated.
\end{example}
}
{

\begin{example}
\label{DPP_assoc}
Determinantal point processes are negatively associated (see \cite[cf. Corollary 6.3]{Ghosh12a}).
\end{example}

We also remark that there are negatively associated point
processes, which are not weakly sub-Poisson. 
A counterexample
given in~\cite{dcx-clust} (which is not a simple point process,
showing that  this
latter assumption cannot be relaxed
in~Proposition~\ref{blaszcyszyn_p.negassoc-weak}) exploits  
\cite[Theorem~2]{JoagDev1983}, which says that 
a random vector having a {\em  permutation distribution} 
(taking as values all $k!$ permutations
of a given deterministic vector with equal probabilities)
is negatively associated.

%

{
\begin{exercise}
\index{binomial process!association}
Show that the binomial point process (cf. Example~\ref{blaszcyszyn_ex.Binomial}) and the simple perturbed lattice 
(cf. Example~\ref{blaszcyszyn_ex:Voronoi_pert_lattice}) are negatively associated.
\end{exercise}
}

\subsection{Directionally Convex Ordering}
\label{blaszcyszyn_ss.dcx}

\subsubsection{Definitions and Basic Results}
In this section, we present some basic results on directionally convex ordering of
point processes that will allow us to see this order also as a tool to
compare clustering of point processes. 

A Borel-measurable function $f:\R^k \rar \R$ is said to be {\em
  directionally convex}~(dcx) if for any $x \in \R^k,
\epsilon,\delta > 0, i,j \in \{1,\ldots,k\}$, we have that
$\Delta_{\epsilon}^i \Delta_{\delta}^jf(x) \geq 0$, where
$\Delta_{\epsilon}^if(x) = f(x + \epsilon e_i) - f(x)$
is the discrete differential
operator, with  $\{e_i\}_{1 \leq i \leq k}$
denoting  the canonical basis vectors of $\R^k$. 
In the following, we abbreviate {\em  increasing} and dcx by idcx and {\em
decreasing} and dcx by ddcx (see \cite[Chap. 3]{Muller02}). 
For random vectors $X$ and $Y$ of the
same dimension, {\em $X$ is said to be smaller than $Y$ in
dcx order} (denoted  $X \leq_{\text{dcx}} Y$)
if $\E f(X) \leq \E f(Y)$ for all $f$ being dcx
such that both expectations in the latter inequality are finite. 
Real-valued random fields are said to be dcx ordered if all
finite-dimensional marginals are dcx ordered.
\begin{definition}[dcx-order of point processes]
\index{point process!dcx-order@dcx order|seealso{dcx order}}
\index{point process!dcx-order@dcx order}
\index{dcx order@dcx order}
Two point processes $\Phi_1$ and $\Phi_2$ are said to be dcx-ordered, i.e. $\Phi_1 \leq_{\text{dcx}} \Phi_2$, if for any $k\geq 1$ and bounded Borel sets $B_1, \ldots, B_k$ in $\R^d$,
it holds that $(\Phi_1(B_1),\ldots,\Phi_1(B_k)) \leq_{\text{dcx}}
(\Phi_2(B_1),\ldots,\Phi_2(B_k))$.
\end{definition}

The definition of comparability of point processes is similar for other orders, i.e. those defined by
$\text{idcx},\allowbreak \text{ddcx}$ functions. It is enough to verify the above
conditions for $B_1,\ldots,B_k$ mutually disjoint, cf.~\cite{snorder}. 
In order to avoid technical difficulties, we will
consider only point processes whose intensity measures
are locally finite. For such point processes, the dcx-order is a
{partial} order. 

\begin{remark}
It is easy to see that 
$\Phi_1 \leq_{\text{dcx}} \Phi_2$ {\em implies the  equality
of their  intensity measures}, i.e: $\E\Phi_1(B) =\E\Phi_2(B)=\alpha(B)$
for any bounded Borel set $B\subset \R^d$ as both $x$ and $-x$ are dcx functions. 
\end{remark}

We argue that, dcx-ordering is also useful in 
clustering comparison of point processes.
\begin{svgraybox}
Point processes larger in dcx-order cluster
more, whereas point processes larger in idcx-order cluster
more while having on average more points, and point processes 
larger in ddcx-order cluster more while having on average less points.
\end{svgraybox}

The two statements of the following result were proved 
in~\cite{snorder} and~\cite{dcx-clust}, respectively. They show 
that dcx-ordering is stronger than
comparison of moments measures and void probabilities
considered in the two previous sections.
\begin{proposition}
\label{blaszcyszyn_Prop:dcx-alpha-nu}
\index{void probability}
\index{moment measure}
\index{dcx order@dcx order}
Let $\Phi_1$ and $\Phi_2$ be two point process on $\R^d$. Denote their 
moment measures by $\alpha_j^{k}$ ($k\ge1$)
 and their void probabilities by $v_j$, $j=1,2$, respectively.
\begin{enumerate}
\item If $\Phi_1 \leq_{\rm{idcx}} \Phi_2$ then
 $\alpha_1^{k}(B)\le \alpha_2^{k}(B)$ for all bounded Borel sets $B\subset
 (\R^{d})^k$,  provided that $\alpha_j^{k}$ is $\sigma$-finite for 
$k\ge1$, $j=1,2$. 
\item If  $\Phi_1 \leq_{\text{\rm{ddcx}}} \Phi_2$ then
$v_1(B) \le v_2(B)$
for all bounded Borel sets $B\subset\R^d$.
\end{enumerate}
\end{proposition}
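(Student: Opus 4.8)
The plan is to realise both families of quantities as expectations of suitable monotone test functions of finitely many counts $\Phi(C_1),\ldots,\Phi(C_m)$ on \emph{disjoint} bounded Borel sets, so that the defining inequality of the relevant order can be applied directly; recall from the remark following the definition of dcx-order that it is enough to test the order on disjoint sets.

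Part~(2) I expect to be immediate. The one-variable function $f(x)=(1-x)^+=\max(1-x,0)$ is decreasing and convex on $\R$, hence ddcx, and it coincides with $\ind\{n=0\}$ on $\Z_+$. Therefore, for any bounded Borel $B\subset\R^d$,
$$v_j(B)=\P\bigl(\Phi_j(B)=0\bigr)=\E\bigl[f(\Phi_j(B))\bigr]\in[0,1],$$
and applying $\Phi_1\leq_{\text{ddcx}}\Phi_2$ with $k=1$ and this test function yields $v_1(B)\leq v_2(B)$.

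For part~(1) I would first reduce to product sets. If $\alpha_2^k(B)=\infty$ there is nothing to prove, so fix a bounded Borel $B\subset(\R^d)^k$ with $\alpha_2^k(B)<\infty$; using $\sigma$-finiteness of $\alpha_2^k$ and outer regularity, choose an open $U\supseteq B$ with $\alpha_2^k(U)<\alpha_2^k(B)+\varepsilon$ and decompose $U$ into a countable disjoint union of dyadic product boxes $Q_n=Q_n^1\times\cdots\times Q_n^k$. Granting $\alpha_1^k(Q)\leq\alpha_2^k(Q)$ for every product box $Q$, one gets $\alpha_1^k(B)\leq\alpha_1^k(U)=\sum_n\alpha_1^k(Q_n)\leq\sum_n\alpha_2^k(Q_n)=\alpha_2^k(U)<\alpha_2^k(B)+\varepsilon$, and letting $\varepsilon\downarrow0$ finishes. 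So it remains to prove $\alpha_1^k(B_1\times\cdots\times B_k)\leq\alpha_2^k(B_1\times\cdots\times B_k)$ for \emph{arbitrary} bounded Borel sets $B_1,\ldots,B_k$ — in particular the $B_i$ may coincide, as happens on the diagonal. Pick a finite partition of $\bigcup_i B_i$ into disjoint bounded Borel sets $C_1,\ldots,C_m$ with each $B_i=\bigcup_{j\in S_i}C_j$, so that
$$\prod_{i=1}^k\Phi(B_i)=\prod_{i=1}^k\Bigl(\sum_{j\in S_i}\Phi(C_j)\Bigr)=g\bigl(\Phi(C_1),\ldots,\Phi(C_m)\bigr),\qquad g(y)=\prod_{i=1}^k\Bigl(\sum_{j\in S_i}y_j\Bigr).$$
A direct computation of the mixed second differences shows that $g$, a product of linear forms with nonnegative coefficients, is increasing and dcx on $\R_+^m$; composing each coordinate with the convex increasing map $y\mapsto y^+$ extends it to a function that is idcx on all of $\R^m$ and unchanged on $\Z_+^m$. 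Applying $\Phi_1\leq_{\text{idcx}}\Phi_2$ to the disjoint sets $C_1,\ldots,C_m$ with this test function gives
$$\alpha_1^k(B_1\times\cdots\times B_k)=\E\,g\bigl(\Phi_1(C_1),\ldots\bigr)\leq\E\,g\bigl(\Phi_2(C_1),\ldots\bigr)=\alpha_2^k(B_1\times\cdots\times B_k),$$
the inequality holding in $[0,\infty]$ since $g\geq0$, so that the $\sigma$-finiteness hypothesis is really needed only for the measure-theoretic globalisation above.

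I expect the only real obstacle to lie in part~(1): one must pass from the hypothesis, which controls test sets only when they are mutually disjoint, to moment measures of \emph{overlapping} product sets (the partition-and-expand step, together with checking that the resulting polynomial test function is genuinely idcx and extends to one on $\R^m$), and then from product boxes to general Borel sets, where $\sigma$-finiteness enters through outer regularity. Part~(2) requires nothing beyond spotting the test function $(1-x)^+$.
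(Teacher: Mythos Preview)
Your proof is correct, and part~(2) is exactly the paper's approach: the hint given there is precisely to use that $(1-x)\vee 0$ is decreasing convex.

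For part~(1), however, you take an unnecessary detour. You seem to read the remark after the definition of dcx-order as saying that the order only \emph{applies} on disjoint test sets, so you feel obliged to partition $\bigcup_i B_i$ into disjoint pieces $C_1,\ldots,C_m$ and then verify that the resulting polynomial $g(y)=\prod_i\bigl(\sum_{j\in S_i}y_j\bigr)$ is idcx. That remark actually says the opposite: to \emph{establish} $\Phi_1\leq_{\text{idcx}}\Phi_2$ it suffices to check disjoint sets, but once the order holds (as in the hypothesis here) it holds for \emph{all} bounded Borel $B_1,\ldots,B_k$, overlapping or not. The paper's route is therefore shorter: apply the idcx inequality directly to the test function $h(x_1,\ldots,x_k)=\prod_{i=1}^k(x_i\vee 0)$ evaluated at $(\Phi(B_1),\ldots,\Phi(B_k))$ for the given, possibly overlapping, $B_i$. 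This immediately yields $\alpha_1^k(B_1\times\cdots\times B_k)\le\alpha_2^k(B_1\times\cdots\times B_k)$, and your $\sigma$-finiteness\,/\,outer-regularity argument to pass from product boxes to general bounded Borel $B\subset(\R^d)^k$ is then the only remaining step. Your partition argument is valid --- the function $g$ is indeed idcx on $\R_+^m$, being a sum of monomials with nonnegative coefficients, and the extension via $y\mapsto y^+$ works --- but it reproduces by hand what the definition already gives you for free.
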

{
\begin{exercise}
Show that $\prod_i(x_i \vee 0)$ is a dcx-function and $(1-x) \vee 0$ is a convex function. Using these facts to prove the above proposition. 
\end{exercise}
}
Note that the $\sigma$-finiteness condition considered in the first statement of Proposition~\ref{blaszcyszyn_Prop:dcx-alpha-nu} is missing
  in~\cite{snorder}; see~\cite[Proposition~4.2.4]{Yogesh_thesis} for the
  correction.
An important observation is that the  operation of clustering
perturbation introduced in Sect.~\ref{blaszcyszyn_ss.perturbation}
 is dcx monotone with respect to the replication kernel in the
 following sense; cf.~\cite{dcx-clust}.
\begin{proposition}\label{blaszcyszyn_p.pert-lattice}
\index{perturbation kernel!convex ordering}
\index{dcx order@dcx order!of perturbed processes}
Consider a point process $\Phi$ with locally finite intensity measure $\al$
and its two perturbations
$\Phi_j^{\rm{pert}}$ ($j=1,2$) satisfying
condition~({\upshape{\ref{blaszcyszyn_e:perturbation-Radon}}}), and having the same
displacement kernel $\cX$ and possibly different replication kernels
$\cN_j$, $j=1,2$, respectively.
If $\cN_1(x,\cdot)\leq_{\rm{cx}}\cN_2(x,\cdot)$ (which means convex
ordering of the conditional distributions of the number of replicas)
for $\al$-almost all $x\in\R^d,$ then
$\Phi^{\rm{pert}}_1\leq _{\rm{dcx}} \Phi^{\rm{pert}}_2$.
\end{proposition}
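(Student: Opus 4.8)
The plan is to compare the two perturbed processes cluster by cluster, conditionally on the parent process $\Phi$, and then to aggregate over the points of $\Phi$. Fix bounded Borel sets $B_1,\dots,B_k\subset\R^{d}$. I would realise $\Phi_1^{\mathrm{pert}}$ and $\Phi_2^{\mathrm{pert}}$ on one probability space driven by the \emph{same} parent process $\Phi$ and the \emph{same} families of i.i.d.\ displacement variables $Y_{iX}\sim\cX(X,\cdot)$, the two realisations differing only through the replica counts $N_X^{(1)}\sim\cN_1(X,\cdot)$, $N_X^{(2)}\sim\cN_2(X,\cdot)$; this is legitimate since dcx comparison depends only on the laws. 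Put $\xi_{iX}=\bigl(\ind\{Y_{iX}\in B_1\},\dots,\ind\{Y_{iX}\in B_k\}\bigr)\in\{0,1\}^{k}$ and $V_X^{(j)}=\sum_{i=1}^{N_X^{(j)}}\xi_{iX}$, so that, given $\Phi$,
\[
\bigl(\Phi_j^{\mathrm{pert}}(B_1),\dots,\Phi_j^{\mathrm{pert}}(B_k)\bigr)=\sum_{X\in\Phi}V_X^{(j)},\qquad j=1,2,
\]
with the $(V_X^{(j)})_{X\in\Phi}$ independent across $X$ conditionally on $\Phi$. Because dcx comparison is stable under mixing over a common randomisation, it suffices to prove the conditional dcx inequality between these two sums given $\Phi$; and since $\cN_1(x,\cdot)\le_{\mathrm{cx}}\cN_2(x,\cdot)$ for $\alpha$-a.e.\ $x$ with $\alpha=\E\Phi$, Campbell's formula shows that a.s.\ this convex ordering holds simultaneously at every point $X$ of $\Phi$.

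First I would settle the single-cluster comparison $V_X^{(1)}\le_{\mathrm{dcx}}V_X^{(2)}$. Fix a point $X$ and a dcx function $\psi:\R^{k}\to\R$, and set $g(n)=\E\,\psi\bigl(\sum_{i=1}^{n}\xi_{iX}\bigr)$, $n\in\Z_+$, which is finite since $\sum_{i=1}^{n}\xi_{iX}$ takes values in the finite set $\{0,\dots,n\}^{k}$. The point is that $g$ is convex on $\Z_+$: writing $S_n=\sum_{i=1}^{n}\xi_{iX}$ and letting $\xi,\xi'$ be independent copies of $\xi_{1X}$, independent of $S_n$, exchangeability gives
\[
g(n+2)-2g(n+1)+g(n)=\E\Bigl[\psi(S_n+\xi+\xi')-\psi(S_n+\xi)-\psi(S_n+\xi')+\psi(S_n)\Bigr],
\]
and the integrand is non-negative: a dcx function satisfies the box inequality $\psi(s+u+v)-\psi(s+u)-\psi(s+v)+\psi(s)\ge0$ for all $s\in\R^{k}$ and $u,v\in\R_+^{k}$ (iterate the defining relations $\Delta_{\epsilon}^{i}\Delta_{\delta}^{j}\psi\ge0$ over the coordinates of $u$ and $v$; equivalently, $(\sigma,\tau)\mapsto\psi(s+\sigma u+\tau v)$ is supermodular on $[0,1]^{2}$), and here $\xi,\xi'\ge0$. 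With $g$ convex, the hypothesis $\cN_1(X,\cdot)\le_{\mathrm{cx}}\cN_2(X,\cdot)$ yields $\E\,\psi(V_X^{(1)})=\E\,g(N_X^{(1)})\le\E\,g(N_X^{(2)})=\E\,\psi(V_X^{(2)})$, i.e.\ $V_X^{(1)}\le_{\mathrm{dcx}}V_X^{(2)}$.

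To aggregate, enumerate the points of $\Phi$ measurably as $X_1,X_2,\dots$. Stability of dcx comparison under convolution of independent summands gives $\sum_{i=1}^{n}V_{X_i}^{(1)}\le_{\mathrm{dcx}}\sum_{i=1}^{n}V_{X_i}^{(2)}$ for every finite $n$. As $n\to\infty$ the partial sums increase componentwise to $\sum_{X\in\Phi}V_X^{(j)}$, which by the Radon condition~\eqref{blaszcyszyn_e:perturbation-Radon} is a.s.\ finite in every coordinate; since dcx functions are continuous, one passes to the limit by a routine argument (split $\psi$ into an affine minorant plus a non-negative remainder, use that $\cN_1(x,\cdot)\le_{\mathrm{cx}}\cN_2(x,\cdot)$ forces equality of the two mean measures so the affine parts converge, and apply Fatou's lemma to the remainders), obtaining $\sum_{X\in\Phi}V_X^{(1)}\le_{\mathrm{dcx}}\sum_{X\in\Phi}V_X^{(2)}$ given $\Phi$. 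Removing the conditioning on $\Phi$ and recalling the reduction of the first paragraph gives $\Phi_1^{\mathrm{pert}}\le_{\mathrm{dcx}}\Phi_2^{\mathrm{pert}}$. The main obstacle is precisely this last limiting step: the single-cluster convexity computation is short and the convolution/mixing stability of dcx order is standard, but controlling expectations of possibly unbounded dcx functions along the passage from finite to infinite (random) sums of clusters is where the work lies, and it is exactly here that the integrability afforded by~\eqref{blaszcyszyn_e:perturbation-Radon} is used.
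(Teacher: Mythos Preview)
The paper does not actually prove this proposition; it simply cites \cite{dcx-clust} for the argument. Your proposal is correct and is the natural route: reduce to a conditional comparison given $\Phi$, show that each cluster vector $V_X^{(j)}=\sum_{i=1}^{N_X^{(j)}}\xi_{iX}$ inherits dcx ordering from the convex ordering of $N_X^{(j)}$ via the convexity of $n\mapsto\E\,\psi(\sum_{i=1}^n\xi_{iX})$, and then aggregate using closure of dcx under independent sums and mixtures. The box-inequality computation establishing convexity of $g$ is exactly right, and your identification of the limiting step (finite to countable sums of clusters) as the place where~\eqref{blaszcyszyn_e:perturbation-Radon} is needed is accurate.

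Two small remarks on the passage to the limit. First, since the $B_i$ may be taken disjoint (the paper notes this suffices for verifying dcx order), the $\xi_{iX}$ have at most one nonzero coordinate, which can simplify integrability bookkeeping. Second, your affine-minorant-plus-Fatou scheme works on $\Z_+^k$: a dcx function restricted to $\Z_+^k$ does admit an affine minorant there (take the supporting affine function at the origin determined by the first forward differences; directional convexity forces the remainder to be non-negative along every coordinate ray, and supermodularity propagates this to the full orthant), and equality of the mean measures---forced by $\cN_1(x,\cdot)\le_{\rm cx}\cN_2(x,\cdot)$---handles the affine part. Alternatively one can first treat bounded dcx functions and then extend by monotone approximation. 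Either way your outline is sound; this is indeed the standard argument, and essentially what the cited reference does.
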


Thus clustering perturbations of a given point process provide
many examples of point process comparable in dcx-order. Examples 
of convexly ordered replication kernels have been given in  
Example~\ref{blaszcyszyn_ex:Voronoi_pert_lattice}.

Another  observation, proved in~\cite{snorder},
says that the  operations transforming some  random 
measure $\rndLambda$ into a Cox point process  $\Cox_{\rndLambda}$ 
(cf. Definition~\ref{blaszcyszyn_d.Cox}) preserves the dcx-order.
\begin{proposition}
\index{Cox process!dcx-ordering@dcx ordering}
\index{dcx order@dcx order!of Cox processes}
\label{blaszcyszyn_prop:int_fld_meas}
Consider two random measures  $\rndLambda_1$ and $\rndLambda_2$ 
on $\R^d$.
If $\rndLambda_1 \leq_{\rm{dcx\resp{\rm{idcx}}}}
\rndLambda_2$ then $\rm{\Cox}_{\rndLambda_1} \leq_{\rm{dcx\resp{\rm{idcx}}}} \rm{\Cox}_{\rndLambda_2}$.
\end{proposition}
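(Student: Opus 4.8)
The plan is to condition on the driving random measure, which turns the statement into the assertion that ``Poissonisation'' preserves directional convexity. Fix $k\ge1$; by the reduction to pairwise disjoint test sets recalled right after the definition of dcx order of point processes, it suffices to compare the vectors $(\Cox_{\rndLambda_j}(B_1),\ldots,\Cox_{\rndLambda_j}(B_k))$ for \emph{disjoint} bounded Borel sets $B_1,\ldots,B_k$. By the defining property of a Cox process, conditionally on $\rndLambda=\Lambda$ this vector has independent coordinates, the $i$-th being $\Pois(\Lambda(B_i))$-distributed. Hence, for any measurable $f\colon\R^k\to\R$ making the relevant quantities finite, the tower property gives
\[
\EXP{f\bigl(\Cox_{\rndLambda}(B_1),\ldots,\Cox_{\rndLambda}(B_k)\bigr)}=\EXP{g\bigl(\rndLambda(B_1),\ldots,\rndLambda(B_k)\bigr)},
\]
where $g(t_1,\ldots,t_k):=\EXP{f(P_1,\ldots,P_k)}$ with $P_1,\ldots,P_k$ independent, $P_i\sim\Pois(t_i)$. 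Therefore, \emph{provided $g$ is again dcx} (resp. idcx) on $\R_+^k$, one applies the hypothesis $\rndLambda_1\leq_{\mathrm{dcx}}\rndLambda_2$ (resp. idcx) to the single test function $g$ and the marginals $(\rndLambda_j(B_1),\ldots,\rndLambda_j(B_k))$, obtaining $\EXP{f(\Cox_{\rndLambda_1}(B_i))_i}\le\EXP{f(\Cox_{\rndLambda_2}(B_i))_i}$; letting $f$, $k$ and the $B_i$ vary yields the claim.

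\noindent The heart of the matter is thus the lemma: if $f$ is dcx then so is $g$, and if $f$ is idcx then so is $g$. I would deduce this from the elementary Poisson differentiation identity
\[
\frac{\partial}{\partial t}\,\EXP{\psi(\Pois(t))}=\EXP{\psi(\Pois(t)+1)-\psi(\Pois(t))},
\]
valid for $\psi$ of at most polynomial growth. Applied coordinatewise (using independence of the $P_i$), it gives $\partial_i g(t)=\EXP{(\Delta^i f)(P_1,\ldots,P_k)}$ and, iterating, $\partial_i\partial_j g(t)=\EXP{(\Delta^j\Delta^i f)(P_1,\ldots,P_k)}$, where $\Delta^i$ is the unit forward difference in the $i$-th variable. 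Since $f$ is dcx, $\Delta^j\Delta^i f\ge0$ pointwise (take $\epsilon=\delta=1$ in the definition), so every mixed second partial of $g$ is non-negative on $\R_+^k$; as $g$ is smooth there (an absolutely convergent power series in $t$), this is equivalent to $g$ being dcx. If moreover $f$ is increasing, then $\Delta^i f\ge0$, whence $\partial_i g\ge0$ and $g$ is increasing, so $g$ is idcx.

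\noindent Two points require care and are where the genuine work sits. First, integrability: I would first prove the inequality for \emph{bounded} dcx (resp. idcx) $f$, for which the conditioning and the interchange of differentiation and expectation above are immediate, and only then remove boundedness by a truncation/monotone-limit argument, consistently with the finiteness convention built into the definition of $\leq_{\mathrm{dcx}}$. Second, $g$ is a priori defined and dcx only on $\R_+^k$, whereas the definition of dcx order tests functions on all of $\R^k$; since the vectors $(\rndLambda_j(B_i))_i$ are $\R_+^k$-valued this is harmless, but to be fully rigorous one must extend $g$ to a dcx (resp. idcx) function on $\R^k$ — which can be done by affine extrapolation off the positive orthant, one coordinate at a time, using the one-sided boundary derivatives of $g$ (which are themselves dcx in the remaining variables by the displayed computation). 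I expect this extension step, together with the careful justification of the differentiation identity and the interchanges it entails, to be the main technical obstacle; the rest is bookkeeping.
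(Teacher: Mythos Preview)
The paper does not give its own proof of this proposition; it simply records the statement and cites \cite{snorder} for the argument. So there is no in-paper proof to compare against, only the referenced source.

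Your approach is correct and is essentially the standard one: condition on the driving measure to reduce to the claim that the map $(t_1,\ldots,t_k)\mapsto \EXP{f(P_1,\ldots,P_k)}$ with independent $P_i\sim\Pois(t_i)$ preserves (i)dcx, and verify the latter via the Poisson forward-difference identity $\partial_t\,\EXP{\psi(\Pois(t))}=\EXP{\Delta\psi(\Pois(t))}$. This is precisely the route taken in \cite{snorder}: one shows that the Cox ``Poissonisation'' kernel sends (i)dcx test functions to (i)dcx functions of the intensity vector, and then invokes the assumed order on $\rndLambda_1,\rndLambda_2$. Your reduction to disjoint $B_i$ is exactly the simplification noted after the definition of dcx order in the paper.

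Two remarks on your caveats. First, the domain issue is a non-issue in practice: since $(\rndLambda_j(B_i))_i$ is $\R_+^k$-valued, it suffices to have $g$ dcx on $\R_+^k$; the order for nonnegative random vectors is determined by dcx test functions on the support, so no extension is actually needed (your affine-extrapolation fix works too, but is unnecessary). Second, the integrability step is the only place requiring genuine care, and your plan---prove it first for bounded $f$, then pass to the limit---is the right one; in \cite{snorder} this is handled by invoking known approximation results for the dcx class (one may restrict to twice continuously differentiable dcx functions with bounded derivatives as a generating class, cf.\ \cite{Muller02}), which sidesteps having to build ad hoc dcx truncations of a given $f$. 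If you want a self-contained write-up, citing such a generator result is cleaner than a bespoke truncation.
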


The above result, combined with further results on comparison of
shot-noise fields presented in Sect.~\ref{blaszcyszyn_sss.SN}
will allow us to compare many Cox point processes (cf. Example~\ref{blaszcyszyn_ex.dcx-Cox}).

\subsubsection{Comparison of Shot-Noise Fields}
\label{blaszcyszyn_sss.SN}
Many interesting quantities in stochastic geometry can be expressed
by additive or extremal shot-noise fields. They are also used
to construct more sophisticated point process models.
For this reason, we state some results on dcx-ordering of shot-noise fields that are widely used in applications. 
\begin{definition}[Shot-noise fields]
\label{blaszcyszyn_defn:ISN}
\index{shot-noise field}
Let $S$ be any (non-empty) index set.
Given a point process
$\Phi$ on $\R^d$ and a {\em response function} $h(x,y) :\R^d \times S \rightarrow (-\infty, \infty]$ which is  measurable in the first variable, then
the {\em (integral) shot-noise field} $\{ V_{\Phi}(s), s\in S\}$ is
defined as
\begin{equation}
\label{blaszcyszyn_eqn:sn_rm}
 V_{\Phi}(y) = \int_{\R^d}h(x,y)\Phi(\md x) = \sum_{X \in \Phi}h(X,y),
\end{equation}
and the {\em extremal shot-noise} field $\{U_{\Phi}(s), s\in S\}$ is defined as
\index{shot-noise field!extremal}
\begin{equation}
\label{blaszcyszyn_eqn:esn}
U_{\Phi}(y) = \sup_{X \in \Phi}\{h(X,y)\}.
\end{equation}
\end{definition}

As we shall see in Sect.~\ref{blaszcyszyn_sec:u_stat}
(and also in the proof of Proposition~\ref{blaszcyszyn_prop:ord_cap_fnl})
 it is not merely a formal generalisation to take $S$ being an arbitrary set. Since the  composition of a dcx-function with an  increasing linear function is still dcx, 
linear combinations of $\Phi(B_1),\ldots, \Phi(B_n)$ for finitely many bounded Borel sets $B_1,\ldots, B_n\subseteq\R^d$ (i.e. $\sum_{i=1}^mc_i\Phi(B_i)$ for $c_i \geq 0$) preserve the dcx-order. An integral shot-noise field can be approximated by finite linear combinations of $\Phi(B)$'s and hence justifying continuity, one expects that integral shot-noise fields preserve dcx-order as well. This type of important results on dcx-ordering of point processes is stated below.
\begin{proposition}({~\upshape{\bf{\cite[Theorem 2.1]{snorder}}}})
\label{blaszcyszyn_thm:isn_rm}
\index{dcx order@dcx order!of shot-noise fields}
\index{shot-noise field!dcx order@dcx order of }
Let $\Phi_1$ and $\Phi_2$ be arbitrary point processes on $\R^d$. 
Then, the following statements are true.
\begin{enumerate}
\item If $\Phi_1  \leq_{\rm{idcx}} \Phi_2$,
then $\{V_{\Phi_1}(s), s\in S\} \leq_{\rm{idcx}} \{V_{\Phi_2}(s), s\in S\}$.

\item If $\Phi_1  \leq_{\rm{dcx}} \Phi_2$,
then $\{V_{\Phi_1}(s), s\in S\} \leq_{\rm{dcx}} \{V_{\Phi_2}(s), s\in S\}$, provided that $\E V_{\Phi_i}(s) < \infty$,  for all $s \in S$,
  $i=1,2$. 
\end{enumerate}
\end{proposition}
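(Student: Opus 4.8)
\textbf{Proof plan for Proposition~\ref{blaszcyszyn_thm:isn_rm}.}

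The plan is to reduce the comparison of the infinite-dimensional shot-noise fields to the comparison of finitely many linear combinations of point process increments, and then to pass to the limit. First I would fix an arbitrary finite collection of indices $s_1,\ldots,s_m\in S$ and show that $(V_{\Phi_1}(s_1),\ldots,V_{\Phi_1}(s_m))$ is dcx-smaller (resp. idcx-smaller) than the corresponding vector for $\Phi_2$; this suffices since dcx/idcx ordering of random fields is defined through finite-dimensional marginals. For a single step, observe that for a simple function $h_j(\cdot,s)=\sum_{\ell} c_{j\ell}\,\ind_{B_\ell}(\cdot)$ with $c_{j\ell}\ge 0$ and $B_\ell$ bounded Borel, the vector $(V_{\Phi}(s_1),\ldots,V_{\Phi}(s_m))$ is the image of $(\Phi(B_1),\ldots,\Phi(B_n))$ under a linear map with nonnegative coefficients. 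Since the composition of a dcx (resp. idcx) function with a coordinatewise-increasing linear map is again dcx (resp. idcx), the hypothesis $\Phi_1\le_{\rm dcx}\Phi_2$ (resp. $\le_{\rm idcx}$) applied to this fixed finite family of sets immediately yields the ordering of the simple-function shot-noise vectors.

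Next I would remove the restriction to simple response functions by monotone/dominated approximation. For nonnegative measurable $h(\cdot,s_j)$, choose an increasing sequence of nonnegative simple functions $h^{(k)}(\cdot,s_j)\uparrow h(\cdot,s_j)$; then $V^{(k)}_{\Phi_i}(s_j)\uparrow V_{\Phi_i}(s_j)$ pointwise (on the event that the integrals are finite, which we must assume, hence the moment condition $\E V_{\Phi_i}(s)<\infty$ in the dcx part). To transfer the ordering through the limit I would use the standard fact (see \cite[Chap.~3]{Muller02}) that dcx and idcx orders are closed under convergence in distribution provided the relevant expectations stay finite; uniform integrability is guaranteed here by the assumed finiteness of $\E V_{\Phi_i}(s)$, which bounds the $L^1$-norms uniformly in $k$. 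For general (signed) $h$ one splits $h=h^+-h^-$; for the idcx statement no moment condition is needed because idcx test functions applied to nonnegative increments need not be integrable from below, and monotone convergence alone suffices. This is essentially the ``justifying continuity'' remark made just before the proposition, made precise.

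The main obstacle I expect is the limiting argument: one must be careful that the approximating simple functions can be chosen to depend measurably and consistently on the index $s$, and that the limit is taken in a mode (convergence in distribution of all finite-dimensional marginals) under which dcx/idcx ordering is preserved. The dcx case genuinely requires the integrability hypothesis $\E V_{\Phi_i}(s)<\infty$: without it the shot-noise values may be infinite, dcx expectations are not well-defined, and the closure-under-limits property fails. For the idcx case, since increasing functions of nonnegative quantities behave well under monotone limits (monotone convergence theorem applied to $\E f(V^{(k)})\uparrow \E f(V)$ for $f$ idcx and nonnegative, and a truncation argument for general idcx $f$), no moment assumption is needed, which is exactly the asymmetry in the statement. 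I would also note, as a remark, that the same argument delivers joint dcx/idcx comparison of the shot-noise field together with the underlying point process evaluated on any fixed bounded Borel sets, which is what makes the result usable in the applications to Cox processes.
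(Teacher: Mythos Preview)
Your approach is essentially the one the paper sketches immediately before the proposition and the one carried out in~\cite{snorder}: reduce to nonnegative simple response functions, use that the composition of a dcx (resp.\ idcx) test function with a linear map having nonnegative coefficients is again dcx (resp.\ idcx), and then pass to the limit by monotone approximation, with the finiteness of $\E V_{\Phi_i}(s)$ supplying the integrability needed in the dcx case.

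One small correction: your remark about ``general (signed) $h$'' is off. The argument genuinely requires $h\ge 0$, since the map $(\Phi(B_1),\ldots,\Phi(B_n))\mapsto\bigl(\sum_\ell c_{1\ell}\Phi(B_\ell),\ldots,\sum_\ell c_{m\ell}\Phi(B_\ell)\bigr)$ must be componentwise \emph{increasing} for the composition with a dcx or idcx function to stay in the same class; negative coefficients break this. Splitting $h=h^+-h^-$ does not help, because subtracting the $h^-$ part is a decreasing operation. The paper's Definition~\ref{blaszcyszyn_defn:ISN} (in its working form, and in~\cite{snorder}) takes $h$ nonnegative, so there is nothing to prove for signed response functions.
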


The results of Proposition~\ref{blaszcyszyn_thm:isn_rm}, combined with 
those of Proposition~\ref{blaszcyszyn_prop:int_fld_meas}
allow the comparison of many Cox processes.
\begin{example}[Comparable Cox point processes]
\index{Cox process!dcx-ordering@dcx ordering}
\index{dcx order@dcx order!of Cox processes}
\index{dcx order@dcx order!of Levy-based Cox processes@of L\'evy
  based Cox processes}
\label{blaszcyszyn_ex.dcx-Cox}
Let $Z_1$ and $Z_2$ be two L\'{e}vy-bases with mean measures
$\alpha_1$ and $\alpha_2$, respectively. Note that $\alpha_i \leq_{\text{dcx}} Z_i$ ($i=1,2$). This can be easily proved using complete independence of L\'evy bases and Jensen's inequality.
In a sense, the mean measure $\alpha_i$ ``spreads'' (in the sense of dcx) the 
mass better than the corresponding completely independent random measure $Z_i$. 
Furthermore, consider the random fields $\rndlambda_1$ and $\rndlambda_2$ on $\R^d$ given by $\rndlambda_i(y) =\int_{\R^d} k(x,y)Z_i(\md x)$, $i=1,2$
for some non-negative kernel $k$, and assume that these fields
are a.s. locally Riemann integrable.
Denote by $\Cox_{\rndlambda_i}$  and $\Cox_{\exp(\rndlambda_i)}$ the 
corresponding L\'{e}vy-based  and log-L\'{e}vy-based  Cox point process.
The following inequalities hold.
\begin{enumerate}
\item If $Z_1 \leq_{\text{dcx\resp{\text{idcx}}}} Z_2$, then $\Cox_{\rndlambda_1}
  \leq_{\text{dcx\resp{idcx}}} \Cox_{\rndlambda_2}$
 provided that, in case of dcx,  $\EXP{\int_B\rndlambda_i(y)\,\md
   y}<\infty$ for all bounded Borel sets $B\subset \R^d$. 
\item If $Z_1 \leq_{\text{idcx}} Z_2$, then  $\Cox_{\exp(\rndlambda_1)}
  \leq_{\text{idcx}} \Cox_{\exp(\rndlambda_2)}$.
\end{enumerate}

Suppose that $\{X_i(y)\}, i=1,2$ are two Gaussian random fields on $\R^d$
and denote by   $\Cox_{\exp(X_i)}$, ($i=1,2$) the corresponding
log-L\'{e}vy-based  Cox point processes. Then the following is true.
\begin{enumerate}
\addtocounter{enumi}{2}
\item If 
$\{X_1(y)\} \leq_{\rm{idcx}} \{X_2(y)\}$   (as random fields), then $\Cox_{\exp(X_1)}
  \leq_{\rm{idcx}} \Cox_{\exp(X_2)}$.
\end{enumerate}

Note that the condition in the third statement is equivalent to  $\E X_1(y) \leq
\E X_2(y)$ for all $y \in \R^d$ and $\cov(X_1(y_1),X_1(y_2))
\leq \cov(X_2(y_1),X_2(y_2))$ for all $y_1,y_2 \in \R^d$. An example of a 
parametric dcx-ordered family of Gaussian random fields is given
in~\cite{miyoshi04}.

Let $\Phi_1$, $\Phi_2$ be two point processes on $\R^d$ and denote by $\Cox_1$, $\Cox_2$
the generalised shot-noise Cox point processes (cf. Example~\ref{blaszcyszyn_ex.GSNCox}) being clustering
perturbations of  $\Phi_1,$ $\Phi_2$, respectively, with the same
 (Poisson) replication kernel $\cN$ and with displacement distributions  
 $\cX(x,\cdot)$ having density $\cX'(x,y)\,\md y$ for all $x\in\R^d$. 
Then, the following result is true.
\begin{enumerate}
\addtocounter{enumi}{3}
\item  If
$\Phi_1\leq_{\text{dcx\resp{idcx}}}\Phi_2$, then $\Cox_1\le_{\text{dcx\resp{idcx}}}\Cox_2$
 provided that, in case of dcx,  $\int_{\R^d}\cX'(x,y)\,\alpha(\md x)<\infty$
for all $y\in\R^d$, where $\alpha$ is the (common) intensity measure of $\Phi_1$ and $\Phi_2$.
\end{enumerate}
\end{example}

Proposition~\ref{blaszcyszyn_thm:isn_rm} allows us to compare
extremal shot-noise fields using the following well-known representation %
$\P(U(y_i) \leq a_i , 1 \leq i \leq m) = 
\E  e^{- \sum_i \hat{U}_i}$
%
where $\hat{U}_i = \sum_n-\log{\ind(h(X_n,y_i) \leq a_i)}$ is an additive shot-noise field with response function taking values in $[0,\infty].$ Noting that $e^{-\sum_i x_i}$ is a dcx-function, we get the following result.
\begin{proposition}[{~\upshape{\bf{\cite[Proposition 4.1]{snorder}}}}]
\index{shot-noise field!extremal!ordering}
\label{blaszcyszyn_prop:lo_msn}
 Let $\Phi_1 \leq_{\rm{dcx}} \Phi_2$. Then for any $n \geq 1$ and for all $t_i \in \R, y_i \in S, 1 \leq i \leq n$, it holds that 
$$\P(U_{\Phi_1}(y_i) \leq t_i , 1 \leq i \leq n) \le \P(U_{\Phi_2}(y_i) \leq t_i, 1 \leq i \leq n).$$
\end{proposition}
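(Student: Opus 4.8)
The plan is to make rigorous the sketch given just before the statement. Fix $n$, response points $y_1,\ldots,y_n\in S$ and thresholds $t_1,\ldots,t_n\in\R$. The event $\{U_\Phi(y_i)\le t_i,\ 1\le i\le n\}$ is ``decreasing'' in $\Phi$ (deleting points only helps it), so it cannot be reached through the idcx part of Proposition~\ref{blaszcyszyn_thm:isn_rm}; and the natural additive shot-noise representation uses the response $-\log\ind\{h(x,y_i)\le t_i\}\in[0,\infty]$, which is not integrable, so the dcx part of that proposition does not apply directly either. The remedy is a double truncation --- in height and in space --- followed by a monotone limit.

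\textbf{Step 1 (truncated representation).} Let $B_k$ be the centred ball of radius $k$ in $\R^d$, and put
$$
G_k(x)=k\,\ind\{x\in B_k\}\sum_{i=1}^n\ind\{h(x,y_i)>t_i\},
\qquad
W_k(\Phi)=\sum_{X\in\Phi}G_k(X).
$$
Each $G_k$ is a bounded, measurable (in its first argument) response function, so $W_k(\Phi)$ is a scalar integral shot-noise field of $\Phi$ in the sense of Definition~\ref{blaszcyszyn_defn:ISN}, finite a.s.\ since $W_k(\Phi)\le kn\,\Phi(B_k)<\infty$. Since $G_k\le G_{k+1}$ pointwise, $W_k(\Phi)$ is non-decreasing in $k$; moreover, as $k\to\infty$, $W_k(\Phi)=0$ for all $k$ if $\Phi$ has no point $X$ with $h(X,y_i)>t_i$ for some $i$, while $W_k(\Phi)\ge k\to\infty$ otherwise (any such ``bad'' point eventually enters $B_k$). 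Hence $e^{-W_k(\Phi)}\downarrow\ind\{U_\Phi(y_i)\le t_i,\ 1\le i\le n\}$, and by monotone convergence
$$
\P\big(U_\Phi(y_i)\le t_i,\ 1\le i\le n\big)=\lim_{k\to\infty}\E\, e^{-W_k(\Phi)}
$$
for every point process $\Phi$. (Equivalently, one may keep the $n$ fields $\hat U^{(i)}_k(\Phi)=\sum_{X\in\Phi}k\,\ind\{X\in B_k\}\ind\{h(X,y_i)>t_i\}$ separate, noting that $W_k=\sum_i\hat U^{(i)}_k$ and that $(x_1,\ldots,x_n)\mapsto e^{-\sum_i x_i}$ is a dcx function, exactly as in the sketch.)

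\textbf{Step 2 (comparison for fixed $k$, then the limit).} By the remark that $\Phi_1\leq_{\text{dcx}}\Phi_2$ forces equality of intensity measures, $\E\Phi_1=\E\Phi_2=:\alpha$, and $\alpha$ is locally finite; hence $\E\, W_k(\Phi_j)=\int G_k\,\md\alpha\le kn\,\alpha(B_k)<\infty$ for $j=1,2$. Thus the integrability hypothesis of Proposition~\ref{blaszcyszyn_thm:isn_rm}(2) holds (with index set a singleton), giving $W_k(\Phi_1)\leq_{\text{dcx}}W_k(\Phi_2)$; in one dimension this is the convex order, so $\E\,\psi(W_k(\Phi_1))\le\E\,\psi(W_k(\Phi_2))$ for every convex $\psi$ with finite expectations. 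Applying this to the convex, bounded function $\psi(u)=e^{-u}$ gives $\E\,e^{-W_k(\Phi_1)}\le\E\,e^{-W_k(\Phi_2)}$. Combining with Step~1 applied to $\Phi_1$ and to $\Phi_2$, and letting $k\to\infty$, yields $\P(U_{\Phi_1}(y_i)\le t_i,\ 1\le i\le n)\le\P(U_{\Phi_2}(y_i)\le t_i,\ 1\le i\le n)$, as claimed.

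The main obstacle --- the only point needing care beyond the sketch --- is precisely this integrability issue: the ``clean'' response $-\log\ind\{h(x,y_i)\le t_i\}$ is $\infty$-valued, so one genuinely needs the height-and-space truncation, together with the monotonicity in $k$ that legitimises passing to the limit under the expectation. Once that is in place, no integrability hypothesis on $\Phi_1$ or $\Phi_2$ is required, matching the hypothesis-free statement.
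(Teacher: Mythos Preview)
Your proof is correct and follows the same route as the paper's sketch: represent the joint distribution function of the extremal shot-noise as $\E\,e^{-\sum_i\hat U_i}$ with $\hat U_i$ additive shot-noise fields, then exploit that $e^{-\sum_i x_i}$ is dcx together with Proposition~\ref{blaszcyszyn_thm:isn_rm}. Your contribution over the sketch is precisely the point you flag yourself---the height-and-space truncation $G_k$ that turns the $[0,\infty]$-valued response $-\log\ind\{h(\cdot,y_i)\le t_i\}$ into a bounded, compactly supported one so that part~(2) of Proposition~\ref{blaszcyszyn_thm:isn_rm} applies, followed by a monotone limit; this is the standard way to make the sketch rigorous and is implicit in the original reference.
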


An example of application of the above result is the comparison of
{\em capacity functionals} of {\em Boolean models} whose definition 
we recall first.
%

\begin{definition}[Boolean model]
\index{Boolean model}
\index{Boolean model!coverage}
\label{blaszcyszyn_d.Boolean_model}
Given  (the distribution of) a random closed set $Y$
 and a point process  $\Phi$, a {\em
  Boolean model} with the point process of {\em germs} $\Phi$  and the
{\em typical grain} $Y$, is given by the random set $C(\Phi,Y) =\bigcup_{X_i\in\Phi} \{X_i+Y_i\}$, where
$x+A=\{x+a: a\in A\}$, $a\in\R^d$,  $A\subset\R^d$ and $\{Y_i\}$ is a sequence of i.i.d.  random closed sets distributed as $Y$.
We call $Y$ a {\em fixed grain} if there exists a (deterministic) closed set $B\subseteq\R^d$ such
that $Y = B$ a.s. In the case of spherical grains, i.e. $B = B_o(r)$, where
$o$ is the origin of $\R^d$ and $r \ge 0$ a constant, we denote
the corresponding Boolean model by $C(\Phi,r)$.
\end{definition}

A commonly made technical assumption about the distributions of $\Phi$
and $Y$ is that for any compact set $K\subset\R^d$, the expected number of germs $X_i\in\Phi$ such that $(X_i+Y_i)\cap K\not=\emptyset$ is finite.
This assumption, called ``local finiteness of the Boolean
model'' guarantees in particular that $C(\Phi,Y)$ is a random closed set in $\R^d$.
The Boolean models considered throughout this chapter will be assumed to have the local finiteness property.

\begin{proposition}[{\upshape{\bf{\cite[Propostion 3.4]{perc-dcx}}}}]
\label{blaszcyszyn_prop:ord_cap_fnl}
Let  $C(\Phi_j,Y)$, $j=1,2$ be two Boolean models with point processes of germs
$\Phi_j$, $j=1,2$, respectively, and common distribution of the typical
grain $Y$. Assume that $\Phi_1$ and $\Phi_2$ are simple and have locally finite moment
measures. If $\Phi_1\le_{\rm{dcx}}\Phi_2$, then

$$\P\left(C(\Phi_1,Y) \cap B=\emptyset\right)\le\P\left(C(\Phi_2,Y) \cap  B=\emptyset\right)$$

for all bounded Borel sets $B\subset\R^d$.
Moreover, if $Y$ is a fixed compact grain, then the same result holds,
provided $v_1(B)\le v_2(B)$  for all bounded Borel sets $B\subset\R^d$, where $v_i(B)$ denotes 
the void probabilities of $\Phi_i$.
\end{proposition}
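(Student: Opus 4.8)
The plan is to express the capacity/void functional of the Boolean model as a Laplace functional of the point process of germs evaluated at a nonnegative argument, and then to observe that $\mathrm{dcx}$-ordering of point processes forces ordering of such Laplace functionals. First I would fix a bounded Borel set $B\subset\R^d$ and condition on the germ configuration $\Phi$. Since the grains $\{Y_i\}$ are i.i.d.\ copies of $Y$ and independent of $\Phi$, and since $(X_i+Y_i)\cap B=\emptyset$ is equivalent to $Y_i\cap(B-X_i)=\emptyset$ with $B-x=\{b-x:b\in B\}$, one obtains
$$\P\bigl(C(\Phi,Y)\cap B=\emptyset\mid\Phi\bigr)=\prod_{X_i\in\Phi}\bigl(1-T_Y(B-X_i)\bigr),\qquad T_Y(A):=\P(Y\cap A\neq\emptyset).$$
Taking expectations and writing the product as $\exp\bigl(-\sum_{X_i\in\Phi}f(X_i)\bigr)$ with $f(x):=-\log\bigl(1-T_Y(B-x)\bigr)\in[0,\infty]$ (the convention $e^{-\infty}=0$ doing no harm), this reads $\P\bigl(C(\Phi,Y)\cap B=\emptyset\bigr)=\cL_\Phi(f)$ for a nonnegative measurable $f$.

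The core step is then to prove $\cL_{\Phi_1}(f)\le\cL_{\Phi_2}(f)$ for all $f\ge0$ whenever $\Phi_1\le_{\mathrm{dcx}}\Phi_2$. For $f$ of simple form $\sum_{i=1}^{k}c_i\mathbf{1}_{B_i}$ with $c_i\ge0$ and bounded Borel $B_i$, this is immediate from the definition of $\mathrm{dcx}$-order of point processes applied to the map $g\colon(t_1,\dots,t_k)\mapsto\exp\bigl(-\sum_i c_i t_i\bigr)$: one checks $\Delta_\epsilon^i\Delta_\delta^j g(t)=e^{-\sum_l c_l t_l}\bigl(e^{-c_i\epsilon}-1\bigr)\bigl(e^{-c_j\delta}-1\bigr)\ge0$, so $g$ is $\mathrm{dcx}$, and since $0\le g\le1$ both expectations are finite, giving $\cL_{\Phi_1}(f)=\E\exp\bigl(-\sum_i c_i\Phi_1(B_i)\bigr)\le\E\exp\bigl(-\sum_i c_i\Phi_2(B_i)\bigr)=\cL_{\Phi_2}(f)$. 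For general $f\ge0$ I would take simple functions $f_m\uparrow f$; then $\exp\bigl(-\sum_{X}f_m(X)\bigr)\downarrow\exp\bigl(-\sum_{X}f(X)\bigr)$ pointwise and is bounded by $1$, so dominated convergence yields $\cL_\Phi(f_m)\to\cL_\Phi(f)$, and passing to the limit in the inequality proves the claim; combined with the first paragraph this gives the first assertion. One could instead route this through Proposition~\ref{blaszcyszyn_thm:isn_rm} applied to the additive shot-noise field with response $f$, post-composed with the convex function $e^{-\cdot}$, but working directly with the \emph{bounded} $\mathrm{dcx}$ function $g$ has the advantage of sidestepping the finite-mean hypothesis in that proposition. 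I expect this monotone passage from simple to general $f$ (and the measurability of $f$, which follows from measurability of the random closed set $Y$) to be the only delicate point.

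For the fixed compact grain case $Y=B'$ a.s., the event collapses: $(X_i+B')\cap B=\emptyset$ for every $i$ iff $\Phi$ places no point in the set $\{x:(x+B')\cap B\neq\emptyset\}$, which is bounded and Borel because $B$ is bounded and $B'$ is compact. Hence $\P\bigl(C(\Phi_j,B')\cap B=\emptyset\bigr)=v_j\bigl(\{x:(x+B')\cap B\neq\emptyset\}\bigr)$ for $j=1,2$, and the conclusion follows at once from $v_1\le v_2$ on bounded Borel sets, which is the only hypothesis used in this case.
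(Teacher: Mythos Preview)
Your proof is correct and rests on the same core observation as the paper's: the void probability of the Boolean model on $B$ is $\E\exp\bigl(-V_\Phi\bigr)$ for an additive shot-noise $V_\Phi$ with nonnegative response, and $(t_1,\dots,t_k)\mapsto e^{-\sum_i c_i t_i}$ is dcx, so the inequality follows from $\Phi_1\le_{\rm dcx}\Phi_2$. The paper packages this via the extremal shot-noise ordering (Proposition~\ref{blaszcyszyn_prop:lo_msn}), using the representation $\P(U\le a)=\E e^{-\hat U}$ and the freedom to take a general index set $S$ to absorb the random grain; you instead condition on $\Phi$, integrate out the i.i.d.\ grains to get the deterministic response $f(x)=-\log(1-T_Y(B-x))$, and work directly with the Laplace functional. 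These are two wrappings of the same computation; your route has the mild advantage that boundedness of $e^{-\sum c_i t_i}$ lets you bypass the finite-mean hypothesis in Proposition~\ref{blaszcyszyn_thm:isn_rm}. The fixed-grain reduction to a single void probability is exactly as intended. One small caveat: for general bounded Borel $B$ and compact $B'$, the set $\{x:(x+B')\cap B\neq\emptyset\}=B\oplus(-B')$ is analytic but need not be Borel, so strictly speaking you need either to restrict to $B$ for which the Minkowski sum is Borel (e.g.\ $B$ closed or open) or to note that void probabilities extend to universally measurable sets; this is a measure-theoretic technicality rather than a gap in the idea.
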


\subsubsection{Sub- and Super-Poisson Point Processes}
\label{blaszcyszyn_sss.sub-suuper-Poisson}
We now concentrate on dcx-comparison to the Poisson point process.
To this end, we define the following classes of point processes.
\begin{definition}[Sub- and super- Poisson point process]
\index{point process!sub-Poisson!dcx@dcx}
\index{point process!super-Poisson!dcx@dcx}
\index{dcx order@dcx order!sub-Poisson process}
\index{dcx order@dcx order!super-Poisson process}
We call a point process
{\em dcx sub-Poisson} (respectively {\em
 dcx  super-Poisson}) if it is smaller (larger) in dcx-order than the
Poisson point process (necessarily of the same mean measure).
For simplicity, we will just refer to them as sub-Poisson or super-Poisson point process
omitting the phrase dcx. 
\end{definition}

\begin{proposition}\label{blaszcyszyn_newProp}
 A negatively associated point processes $\Phi$ with convexly sub-Poisson
 one-dimensional marginal distributions,
 $\Phi(B)\le_{cx}\Pois(\EXP{\Phi(B)})$ for all 
 bounded Borel sets $B$, is sub-Poisson.  An associated point
 processes with convexly super-Poisson one-dimensional marginal
 distributions is 
 super-Poisson. 
\end{proposition}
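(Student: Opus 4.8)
First I would fix notation. Since $\Phi_1\leq_{\rm{dcx}}\Phi_2$ forces equal intensity measures, the only Poisson process against which $\Phi$ can be compared is $\Pi_\alpha$ with $\alpha=\E\Phi$ (assumed locally finite, as in Proposition~\ref{blaszcyszyn_p.negassoc-weak}), and ``$\Phi$ is sub-Poisson'' means $\Phi\leq_{\rm{dcx}}\Pi_\alpha$. By the reduction recalled after the definition of dcx-ordering of point processes (cf.~\cite{snorder}), it suffices to prove $(\Phi(B_1),\ldots,\Phi(B_k))\leq_{\rm{dcx}}(\Pi_\alpha(B_1),\ldots,\Pi_\alpha(B_k))$ for every finite family of \emph{pairwise disjoint} bounded Borel sets $B_1,\ldots,B_k$. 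Disjointness is exploited twice: the coordinates $\Pi_\alpha(B_1),\ldots,\Pi_\alpha(B_k)$ are then \emph{independent} with $\Pi_\alpha(B_i)\sim\Pois(\alpha(B_i))$, and Definition~\ref{blaszcyszyn_d.n-association} (applied to the sub-families indexed by the two parts of an arbitrary splitting of $\{1,\dots,k\}$) says exactly that the vector $(\Phi(B_1),\ldots,\Phi(B_k))$ is negatively associated in the usual (Joag-Dev--Proschan) sense.

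The plan is to factor the comparison through the ``independent version'' of $\Phi$. Let $\Phi^{\perp}=(\Phi^{\perp}_1,\ldots,\Phi^{\perp}_k)$ be a random vector with \emph{independent} coordinates and $\Phi^{\perp}_i\eqd\Phi(B_i)$. I would establish the two inequalities
$$(\Phi(B_1),\ldots,\Phi(B_k))\ \leq_{\rm{dcx}}\ \Phi^{\perp}\ \leq_{\rm{dcx}}\ (\Pi_\alpha(B_1),\ldots,\Pi_\alpha(B_k))$$
and chain them. For the right-hand inequality both vectors have independent coordinates and, by hypothesis, $\Phi(B_i)\leq_{\rm{cx}}\Pois(\alpha(B_i))=\Pi_\alpha(B_i)$ coordinatewise; the dcx-order is stable under assembling a vector with independent coordinates from one-dimensional cx-ordered marginals, which one checks by replacing the coordinates one at a time, using that a dcx function is convex in each argument separately while dcx-ness is preserved when an independent coordinate is integrated out (this is the same mechanism that underlies Proposition~\ref{blaszcyszyn_thm:isn_rm}). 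The left-hand inequality is where negative association is genuinely used: I would invoke the classical connection between negative association and the supermodular order, namely that a negatively associated random vector lies below its independent counterpart in the supermodular order; since every dcx function is supermodular (the defining inequalities $\Delta_\epsilon^i\Delta_\delta^j f\ge 0$ include the case $i\neq j$), this supermodular domination yields a fortiori the required dcx domination.

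The positively associated case is handled symmetrically, reversing every inequality: association gives $\Phi^{\perp}\leq_{\rm{dcx}}(\Phi(B_1),\ldots,\Phi(B_k))$ (again via the supermodular order and the inclusion of dcx functions among supermodular ones), while the convexly super-Poisson marginals give $(\Pi_\alpha(B_1),\ldots,\Pi_\alpha(B_k))\leq_{\rm{dcx}}\Phi^{\perp}$, so that $\Pi_\alpha\leq_{\rm{dcx}}\Phi$. I expect the single nontrivial step to be precisely the passage from (negative) association to a supermodular comparison with the independent version — all the remaining ingredients are routine closure properties of $\leq_{\rm{dcx}}$, and the marginal hypothesis $\Phi(B)\leq_{\rm{cx}}\Pois(\E\Phi(B))$ is in fact necessary (it is the $k=1$ case of the conclusion), so the proposition is sharp in that respect. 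This step is exactly what upgrades the weak-sub-Poisson conclusion of Proposition~\ref{blaszcyszyn_p.negassoc-weak} to the full dcx comparison, and it is also where the simpleness — and, in the positive case, diffuseness — assumptions of that proposition should be retained, since they are what make the one-dimensional marginals (hence the whole comparison) well behaved.
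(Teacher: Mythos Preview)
Your proposal is correct and follows essentially the same route as the paper's proof sketch: factor the comparison through the independent version $\Phi^{\perp}$, use the Christofides--Vaggelatou result that (negative) association gives supermodular comparison with $\Phi^{\perp}$ (hence dcx comparison, since dcx functions are supermodular), and then pass from $\Phi^{\perp}$ to the Poisson vector by replacing independent coordinates one at a time using the marginal cx-hypothesis. The only difference is that the paper names the key external input explicitly as~\cite[Theorem~1]{christofides2004connection}, whereas you describe it as ``the classical connection between negative association and the supermodular order''; your identification of this as the single nontrivial step is exactly right.
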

\begin{proof}[sketch]
This is a consequence of~\cite[Theorem~1]{christofides2004connection},
which says  that  
a negatively associated random vector is  supermodularly smaller than
the random vector with the same marginal distributions and
independent components. Similarly, an associated random vector is
supermodularly larger than the random vector with the same marginal distributions and
independent components.
Since supermodualr  order is stronger than $dcx$ order, this implies
dcx ordering as well. Finally, a vector with independent coordinates
and  convexly sub-Poisson (super-Poisson) 
marginal distributions is $dxc$ smaller (larger) than 
the vector of independent Poisson variables.
\end{proof}

\begin{example}[Super-Poisson Cox point process]
\index{Cox process!dcx super-Poisson@dcx super-Poisson}
\label{blaszcyszyn_ex.super-Poisson-Cox}
Using Proposition~\ref{blaszcyszyn_prop:int_fld_meas} one can prove (cf.~\cite{snorder})
that Poisson-Poisson cluster point processes and, more generally,
L\'{e}vy-based Cox point processes are super-Poisson.

Also, since any mixture of Poisson distributions 
is cx larger than the Poisson distribution (with the same mean), we
can prove that any mixed Poisson point process is super-Poisson.
\end{example}

\begin{example}[Super-Poisson Neyman-Scott point process]
\index{Neyman-Scott process!dcx super-Poisson@dcx super-Poisson}
By~Proposition~\ref{blaszcyszyn_p.pert-lattice},
any  Ney\-man-Scott point process (cf.
Example~\ref{blaszcyszyn_ex.NS-PP}) with  mean cluster size $n(x)=1$ for all $x\in\R^d$ 
is super-Poisson. Indeed, for any $x\in\R^d$ and 
any replication kernel  $\cN$ satisfying
$\sum_{k=1}^\infty k\cN(x,\{k\})=1$, we have  by Jensen's
inequality that $\delta_1(\cdot)\le_{\text{cx}}\cN(x,\cdot)$, i.e. it is convexly larger
than the Dirac measure on $\Z_+$ concentrated at~1. By the well-known
displacement theorem for Poisson point processes, the clustering 
perturbation of the Poisson (parent) point process with this Dirac replication
kernel is a Poisson point process. 
Using kernels of the form mentioned in~(\ref{blaszcyszyn_eqn:super_Poisson_rvs}) we can
construct dcx-increasing super-Poisson point processes. 
\end{example}

\begin{example}[Sub- and super-Poisson perturbed lattices]
\index{lattice process!super-Poisson perturbed}
\index{lattice process!sub-Poisson perturbed}
\index{dcx order@dcx order!super-Poisson perturbed lattice}
\index{dcx order@dcx order!sub-Poisson perturbed lattice}
\label{blaszcyszyn_ex.pert_lattice_dcx}
Lattice clustering perturbations provide examples of both sub- and super-Poisson
point process, cf. Example~\ref{blaszcyszyn_ex:Voronoi_pert_lattice}. Moreover, the initial
lattice can be 
replaced by any fixed pattern of points, and the displacement kernel
needs not to be supported by the Voronoi cell of the given
point. Assuming Poisson replication kernels we still obtain (not
necessarily homogeneous) Poisson point processes.
Note, for example, that by~(\ref{blaszcyszyn_eqn:sub_Poisson_rvs})
considering binomial replication kernels $\Binom(r,\lambda/r)$
for $r\in\Z^+$, $r\ge\lambda$
 one can construct  dcx-increasing families of
sub-Poisson perturbed lattices converging to the Poisson point process $\Pi_\lambda$.
Similarly,  considering  negative binomial 
replication kernels $\NBinom(r,\lambda/(r+\lambda)$ with $r\in\Z_+$, $r\ge1$ one can construct
dcx-decreasing families of super-Poisson perturbed lattices converging to  $\Pi_\lambda$. The 
simple perturbed lattice (with $\Binom(1,1)$, and necessarily $\lambda=1$)
is the smallest point process in dcx-order within the aforementioned  sub-Poisson family. 
\end{example}

\begin{example}[determinantal and permanental processes]
\index{determinantal process! dcx}
We already mentioned in Example~\ref{blaszcyszyn_ex.Det-Perm-weakly}
that determinantal and permanental point processes are weakly sub-
and super-Poisson point processes, respectively.
Since determinantal point processes are negatively associated (Example
\ref{DPP_assoc}) and also they have convexly sub-Poisson
one-dimensional marginal distributions, cf~\cite[proof of
  Prop.~5.2]{dcx-clust}, Proposition \ref{blaszcyszyn_newProp} gives
us that determinantal point processes are $dcx$ sub-Poisson.
The statement for permanental processes 
can be strengthened to dcx-comparison to the Poisson point process
with the same mean on {\em mutually disjoint,
simultaneously observable compact subsets} of $\R^d$;
see \cite{dcx-clust} for further details on the result and
its proof.
\end{example}

\begin{figure*}[!h]
\begin{center}
\tikzstyle{method} = [draw, rectangle,fill=gray!30, node distance=13ex,
    minimum height=2em, text width=0.3\linewidth, text badly centered]
\tikzstyle{assoc} = [draw, rectangle,rounded corners, dashed, fill=gray!30, node distance=13ex, minimum height=2em, text width=0.3\linewidth, text badly centered]
\tikzstyle{line} = [draw, thick, -latex']
\begin{tikzpicture}
   \node [method,text width=0.4\linewidth ] (dcx)
          {{\bf dcx ordering}\\[1ex] \hrule\vspace{1ex} 
          $dcx$-functions of  $(\Phi(B_1),\ldots,\Phi(B_k))$\\
          and shot-noise fields, in
          particular\\          $\cL_{\Phi}(f)$ for   $f\le 0$ or $f\ge 0$};
\node [assoc,text width=0.4\linewidth, below=8ex of dcx] (association) 
          {{\bf negative \& positive association }\\
comparison with respect to the Poisson point
            process\\[1ex]
            \hrule\vspace{1ex}  $\cL_{\Phi}(f)$ for $f\le 0$ or $f\ge 0$};
   \node [method, below left=5ex and -4em of association] (voids)
          {{\bf comparison of void probabilities}\\[0.5ex] \hrule\vspace{1ex} 
          $\cL_{\Phi}(f)$ for  $f\ge 0$}; 
   \node [method, below right=5ex and -4em of  association] (moments)
          {{\bf comparison of moment measures}\\[1ex] \hrule\vspace{1ex} 
          $\cL_{\Phi}(f)$ for  $f\le 0$};
   \node [method, below of=moments] (stat) 
          {{\bf statistical comparison}\\[1ex] \hrule\vspace{1ex} 
          pair correlation function,\\
          Ripley's $K$-function};

  \path [line] (dcx) -| (moments);
  \path [line] (dcx) -| (voids);
  \path [line] (moments) -- (stat);
  \path [line, dashed] (association) -- node {with marginals  $cx$
    ordered to Poisson} (dcx) ;
  \path [line, dashed] (association) |- (voids);
  \path [line, dashed] (association) |- (moments);
\end{tikzpicture}
\caption{\label{blaszcyszyn_f.flowchart1} 
Relationships between clustering comparison methods, 
and some characteristics that allow to compare them. 
Smaller in any type of comparison means that the point process
clusters less.
Recall,
$\cL_\Phi(f):=\EXP{\exp\left[-\int_{\R^d} f(x)\,\allowbreak\Phi(\md x)\right]}$.} 
\end{center}
\end{figure*}
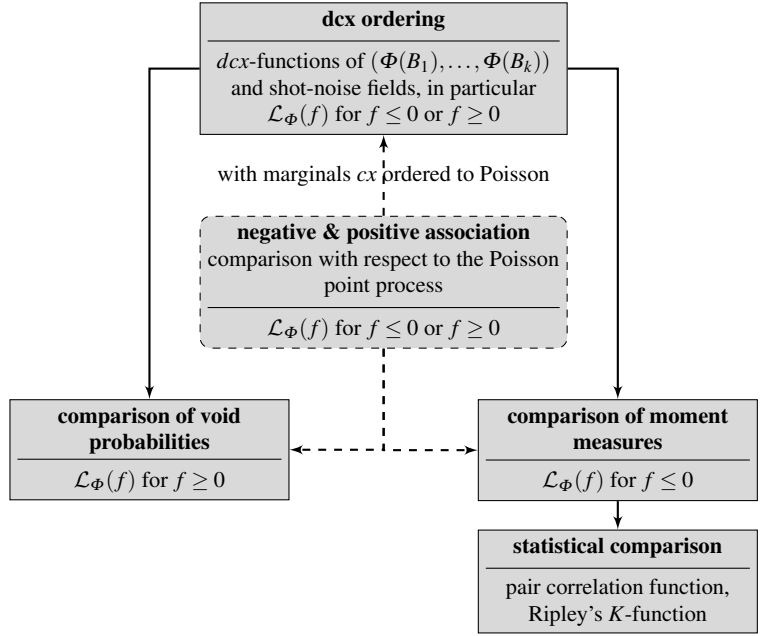

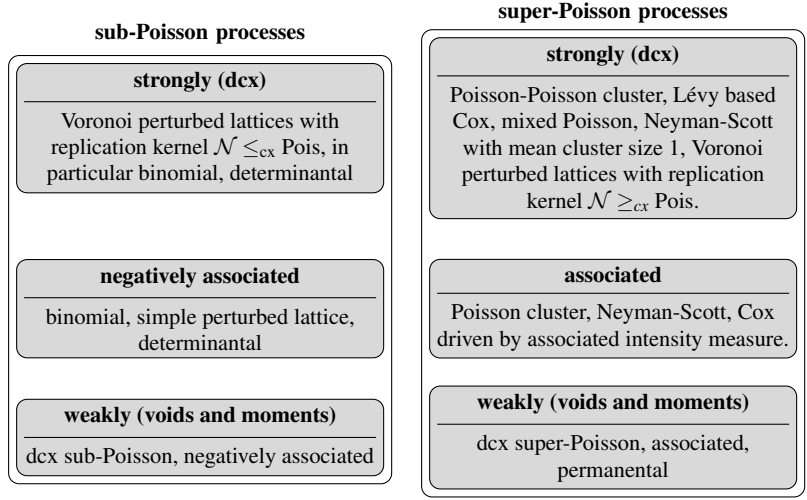
\begin{figure*}[!t]
\begin{center}
\begin{tikzpicture}
\tikzstyle{class} = [draw, rectangle, rounded corners, fill=gray!30,
opacity=1.0, minimum height=2em, text width=0.4\linewidth, text badly centered]
\tikzstyle{bigbox} = [draw,  rectangle,rounded corners]
\node [class] (sub)
{{\bf strongly (dcx)}\\[1ex] \hrule\vspace{1ex}
Voronoi perturbed lattices with replication kernel $\cN\le_{\rm{cx}}\Pois$, 
in particular binomial, determinantal
}; 
\node [class, below=6.5ex of sub] (nassoc)
          {{\bf negatively associated}\\[1ex] \hrule\vspace{1ex}
binomial, simple perturbed lattice, determinantal
}; 
\node[class, below=4ex of nassoc](weaksub){
{\bf weakly (voids and moments)}\\[1ex] \hrule\vspace{1ex}
dcx sub-Poisson, negatively associated
};
\node[bigbox, fit={(sub) (nassoc) (weaksub)}](subPoisson){{\bf
    sub-Poisson processes}\\[43ex]\ };
\node [class, right=2em of sub] (super)
{{\bf strongly (dcx) }\\[1ex] \hrule\vspace{1ex}
        Poisson-Poisson cluster,
        L\'evy based Cox,
        mixed Poisson, 
        Neyman-Scott with mean cluster size~1,
        Voronoi perturbed lattices with replication kernel $\cN\ge_{cx}\Pois$.
}; 
\node [class, right=2em of nassoc] (assoc)
          {{\bf associated }\\[1ex] \hrule\vspace{1ex}
            Poisson cluster, Neyman-Scott,   Cox driven by associated intensity measure.
}; 
\node[class, right=2em of weaksub](weaksuper){
{\bf weakly (voids and moments)}\\[1ex] \hrule\vspace{1ex}
dcx super-Poisson, associated, permanental 
};
\node[bigbox, fit={(super) (assoc) (weaksuper)}](superPoisson)
{{\bf    super-Poisson processes}\\[46ex]\ };
\end{tikzpicture}
\caption{\label{blaszcyszyn_f.flowchart2} 
Some point processes comparable to the Poisson point process using different comparison methods.}  
\end{center}
\end{figure*}

{
\begin{exercise}
Prove the statement of Example~\ref{exe.Cox}.
\end{exercise}
} 

{
\begin{exercise}
Using Hadamard's inequality
prove that determinantal point processes are $\alpha$-weakly 
sub-Poisson.
\end{exercise}
}

\section{Some Applications}
\label{blaszcyszyn_sec:results}

So far we introduced basic notions and results regarding ordering of point processes and
we provided examples of point processes that admit these comparability properties. However,
it remains to demonstrate the applicability of these methods to
random geometric models which will be the goal of this
section. Heuristically speaking, it is possible to easily conjecture the impact
of clustering on various random geometric models, however there is
hardly any rigorous treatment of these issues in the literature. The present 
section shall endeavour to fill this gap by using
the tools of stochastic ordering. We show that one can get 
useful bounds for some quantities of interest on weakly sub-Poisson,
sub-Poisson or negatively associated point processes and that in quite
a few cases these bounds are as good as those for the Poisson point
process. Various quantities of interest are often expressed in terms
of moment measures and void probabilities. This explains the
applicability of our notions of stochastic ordering of point processes
in many contexts. As it might be expected, in this survey of applications,
we shall emphasize breadth more than depth to indicate that many
random geometric models fall within the purview of our
methods. However, despite our best efforts, we would not be able to sketch all possible 
applications. Therefore, we briefly mention a couple of omissions. The
notion of sub-Poissonianity has found usage in at least a couple of
other models than those described below. In \cite{Hirsch2011}, 
connectivity of some approximations of minimal spanning
forests is shown for weakly sub-Poisson point processes. Sans our jargon,
in \cite{Daley05}, the existence of the Lilypond growth model and its
non-percolation under the additional assumption of absolutely
continuous joint intensities is shown for {weakly} sub-Poisson point
processes.

\subsection{Non-trivial Phase Transition in Percolation Models}
\label{blaszcyszyn_ssec:percolation_models}

Consider a stationary  point process $\Phi$ in $\R^d$.  
For a given ``radius'' $r \ge 0$, let us connect 
by an edge any two points of $\Phi$ which are at most at a distance of $2r$
from each other. Existence of an infinite component in the resulting graph is
called {\em percolation} of the graph model based on
$\Phi$. As we have already mentioned in the previous section, 
clustering of $\Phi$ roughly means that the points of $\Phi$
are located in groups being well spaced out. When
trying to find the minimal $r$ for which the graph model based on~$\Phi$
percolates, we observe that points belonging to the same cluster of $\Phi$ will be
connected by edges for some smaller $r$ but points in different
clusters need a relatively high $r$ for having edges between
them. Moreover, percolation cannot be achieved without edges between
some points of different clusters. 
It seems to be 
evident that spreading points
from clusters of $\Phi$ ``more homogeneously'' in the space would
result in a decrease of the radius $r$ for which the percolation takes
place. In other words, \label{blaszcyszyn_heuristic-percol}clustering in a point process $\Phi$ should increase the {\em critical radius} $r_c=r_c(\Phi)$ for the percolation
of the graph model on~$\Phi$, also called  
Gilbert's disk graph or the Boolean model with fixed spherical
grains.

\index{percolation!of Boolean model|seealso{Boolean model}}
\index{percolation!of Boolean model}
\index{Boolean model!percolation}

We have shown in Sect.~\ref{blaszcyszyn_ss.dcx}
that dcx-ordering of point processes  can be used to compare
their clustering properties. Hence, 
the above discussion tempts one to  conjecture that $r_c$ is increasing with respect to dcx-ordering of the underlying point processes; i.e.  
$\Phi_1 \leq_{\text{dcx}} \Phi_2$ implies $r_c(\Phi_1) \leq r_c(\Phi_2)$.
Some numerical evidences gathered in~\cite{dcx-perc}
(where we took Fig.~\ref{blaszcyszyn_f.TwoComponents.subsuperP} from) 
for a dcx-monotone family of
perturbed lattice point processes, were  supportive for this
conjecture.
\begin{figure}[!t]
\begin{center}
\begin{minipage}[b]{1\linewidth}
\vspace{-10ex}
\includegraphics[width=1.\linewidth]{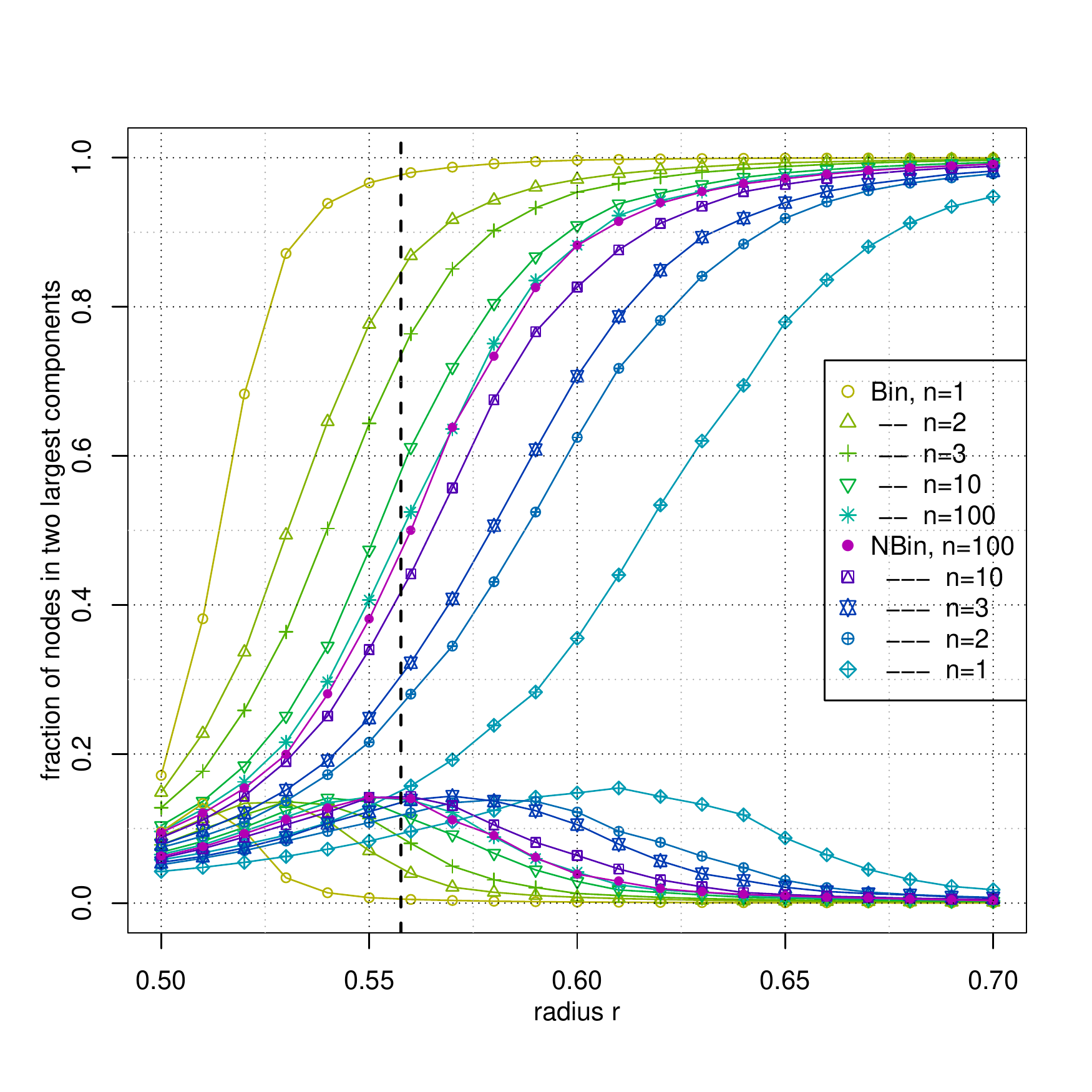}
\end{minipage}
\end{center}
\vspace{-6ex}
\caption{\label{blaszcyszyn_f.TwoComponents.subsuperP}
Mean fractions of nodes in the two largest components
of the sub- and super-Poisson Boolean models
$C(\Phi^{pert}_{\Binom(n,1/n)},r)$ 
and  $C(\Phi^{pert}_{\NBinom(n,1/(1+n))},r)$ ,
respectively, as  functions of $r$.
These families of underlying point processes converge 
to the Poisson point process $\Phi_\lambda$
with intensity $\lambda=2/(\sqrt3)=1.154701$ as $n$ tends to $\infty$;
 cf. Example~\ref{blaszcyszyn_ex.pert_lattice_dcx}.
The dashed vertical line corresponds
to the radius $r=0.5576495$ which is believed to be close 
to the critical radius $r_c(\Phi_\lambda)$.}
\end{figure}

But it turns out that the conjecture is not true in full generality and
a counterexample was  also presented in~\cite{dcx-perc}.
It is a Poisson-Poisson cluster point process, which is known to be
super-Poisson (cf. Example~\ref{blaszcyszyn_ex.super-Poisson-Cox})
whose critical radius is $r_c=0$,  
hence smaller than that of the corresponding Poisson point process, for which $r_c$ is
known to be positive. 
In this Poisson-Poisson cluster  point process, points concentrate  on
some carefully chosen 
larger-scale structure, which itself has good percolation properties. 
In this case, the points concentrate in clusters, however we cannot say that  clusters are well spaced out. Hence,  this example does not 
contradict our initial heuristic explanation of  why an increase of clustering in a
point process should increase the critical radius for the percolation.
It reveals rather that dcx-ordering, while being able to compare
the clustering tendency of point processes, is not capable of 
comparing macroscopic structures of clusters.
Nevertheless, dcx-ordering, and some weaker tools introduced in
Sect.~\ref{blaszcyszyn_sec:methods}, can be used to prove nontrivial phase
transitions for point processes which cluster less than the Poisson process.
In what follows, we will first present some intuitions leading to the
above results and motivating our special focus on moment
measures and void probabilities in the previous sections.

\subsubsection{Intuitions --- Some Non-Standard Critical Radii}
Consider the radii $\underline r_c, \overline r_c$,
which act as lower and upper bounds for the usual
critical radius: i.e. $\underline r_c\le r_c\le \overline r_c$.
We show that clustering acts differently on these bounding radii: 
It turns out that
\index{Boolean model!percolation!nonstandard critical radii}
\index{percolation!nonstandard critical radii}
$$\underline r_c(\Phi_2)\le \underline r_c(\Phi_1)\le r_c(\Phi_1)
\le \overline r_c(\Phi_1)\le \overline r_c(\Phi_2)$$
for $\Phi_1$ having smaller voids and moment measures than $\Phi_2$.
This sandwich inequality tells us that $\Phi_1$ exhibits the usual 
phase transition $0<r_c(\Phi_1)<\infty$, provided $\Phi_2$
satisfies the stronger  conditions $0<\underline r_c(\Phi_2)$ and
$\overline r_c(\Phi_2)<\infty$. Conjecturing that this holds if $\Phi_2$ is a Poisson point process, 
one obtains the  result  on
(uniformly) non-trivial phase transition  for all  weakly sub-Poisson processes~$\Phi_1$,
which has been proved in~\cite{dcx-perc} and will be presented in the subsequent
sections in a slightly different way.

Let $\Phi$ be an arbitrary point process in $\R^d$. 
 Let $W_m = [-m,m]^d$ and define $h_{m,k} : (\R^d)^k \to \{0,1\}$ 
to be the indicator of the event that ${x_1,\ldots,x_k} \in (\Phi \cap
W_m)^k, |x_1| \leq r, \inf_{x \in \partial W_m}|x-x_k| \leq r,
\max_{i\in\{1,\ldots,k-1\}}|x_{i+1} - x_i| \leq r$, where
$\partial W_n$ denotes the boundary of set $W_n$.  
Let $N_{m,k}(\Phi,r) = \sum^{\neq}_{X_1,\allowbreak\ldots,X_k \in \R^d}h_{m,k}(X_1,\ldots,X_k)$ denote the number of distinct self-avoiding paths of length $k$ from 
the origin $o\in\R^2$ to the boundary of the box $W_m$ in the Boolean model and $N_m(\Phi,r) = \sum_{k \geq 1} N_{m,k}(\Phi,r)$ to be the total number of distinct self-avoiding paths 
to the boundary of the box. We define the following ``lower'' critical radius
$$\ur_c(\Phi)  =  \inf \{r : \liminf_m \E N_m(\Phi,r) > 0 \}\,.$$
Note that $r_c(\Phi)=\inf \{r : \lim_{m\to \infty} \P\left(N_m(\Phi,r) \geq 1\right) > 0 \}$, 
with the limit existing because the events $\{N_m(\Phi,r) \geq 1 \}$
form a decreasing sequence in $m$, and by Markov's inequality, we have that 
indeed $\ur_c(\Phi) \leq r_c(\Phi)$ for a stationary point process $\Phi$. 

It is easy to see that $\E N_m(\Phi,r)$  can
be expressed in terms of moment measures, i.e.
 $\E N_m(\Phi,r) =\sum_{k \geq 1} \E N_{m,k}(\Phi,r)$ and 
$$\E N_{m,k}(\Phi,r)=\int_{(\R^d)^k}h_{m,k}(x_1,\ldots,x_k) \alpha_j^{(k)}(\md x_1,\ldots,\md x_k).$$ 
The following result {is obtained  in~\cite{dcx-clust}}.
\begin{proposition}
\label{blaszcyszyn_prop:lower_crit_rad}
Let  $C_j=C(\Phi_j,r)$, $j=1,2$ be two Boolean models with simple point processes of germs
$\Phi_j$, $j=1,2,$ and $\sigma$-finite
$k$-th moment measures $\alpha_j^{k}$ for all $k\ge1$ respectively.
If $\alpha_1^{(k)}(\cdot) \leq \alpha_2^{(k)}(\cdot)$ 
for all $k\ge 1$, then $\ur_c(\Phi_1)\ge \ur_c(\Phi_2).$
In particular, for a stationary, $\alpha$-weakly sub-Poisson point process $\Phi_1$ with unit
intensity, it holds that
$\kappa_d \ur_c(\Phi_1)^d \geq 1$ where $\kappa_d$ is the volume of the unit ball in $\R^d$.
\end{proposition}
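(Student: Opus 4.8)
The plan is to deduce the ``in particular'' claim from the first assertion of the proposition together with an explicit computation for the homogeneous Poisson point process, so I will treat both. For the first assertion, observe that each $h_{m,k}$ is a nonnegative measurable function on $(\R^d)^k$, so the hypothesis $\alpha_1^{(k)}\le\alpha_2^{(k)}$ (as measures, for every $k\ge1$) gives $\E N_{m,k}(\Phi_1,r)\le\E N_{m,k}(\Phi_2,r)$ for all $m,k,r$ by monotonicity of the integral with respect to the integrating measure; summing over $k$ gives $\E N_m(\Phi_1,r)\le\E N_m(\Phi_2,r)$, hence $\liminf_m\E N_m(\Phi_1,r)\le\liminf_m\E N_m(\Phi_2,r)$. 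Therefore $\{r:\liminf_m\E N_m(\Phi_1,r)>0\}\subseteq\{r:\liminf_m\E N_m(\Phi_2,r)>0\}$, and since $\ur_c(\Phi_j)$ is by definition the infimum of the $j$-th of these two sets, the inclusion forces $\ur_c(\Phi_1)\ge\ur_c(\Phi_2)$.

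Next, if $\Phi_1$ is stationary with unit intensity and $\alpha$-weakly sub-Poisson, then by definition $\alpha_1^{(k)}\le\alpha_{\Pi_1}^{(k)}$ for every $k$, where $\Pi_1$ denotes the homogeneous Poisson point process of intensity one; its $k$-th factorial moment measure is $kd$-dimensional Lebesgue measure, which is $\sigma$-finite, so the first assertion applies with $\Phi_2=\Pi_1$ and yields $\ur_c(\Phi_1)\ge\ur_c(\Pi_1)$. It thus suffices to prove $\kappa_d\,\ur_c(\Pi_1)^d\ge1$, i.e. that $\liminf_m\E N_m(\Pi_1,r)=0$ whenever $\kappa_dr^d<1$. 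For the Poisson process, $\E N_{m,k}(\Pi_1,r)=\int_{(\R^d)^k}h_{m,k}(x_1,\dots,x_k)\,\md x_1\cdots\md x_k$, which I would bound by discarding the ``$x_i\in W_m$'' constraints for $i<k$ and integrating from the inside out: $\int\ind(|x_k-x_{k-1}|\le r)\,\md x_k=\kappa_dr^d$, the same for $x_{k-1},\dots,x_2$, and finally $\int\ind(|x_1|\le r)\,\md x_1=\kappa_dr^d$, so $\E N_{m,k}(\Pi_1,r)\le(\kappa_dr^d)^k$. The decisive extra input is a lower bound on the length of an admissible path: if $h_{m,k}(x_1,\dots,x_k)=1$ then $x_k\in W_m$ and $\dist(x_k,\partial W_m)\le r$, so that $\|x_k\|_\infty\ge m-r$, whereas the chain condition gives $|x_k|\le|x_1|+\sum_{i=1}^{k-1}|x_{i+1}-x_i|\le kr$; combining these with $\|x_k\|_\infty\le|x_k|$ forces $k\ge m/r-1$. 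Hence
\[
\E N_m(\Pi_1,r)=\sum_{k\ge1}\E N_{m,k}(\Pi_1,r)\le\sum_{k\ge\lceil m/r-1\rceil}(\kappa_dr^d)^k=\frac{(\kappa_dr^d)^{\lceil m/r-1\rceil}}{1-\kappa_dr^d}
\]
as soon as $\kappa_dr^d<1$, and the right-hand side tends to $0$ as $m\to\infty$. Consequently $\ur_c(\Pi_1)\ge\kappa_d^{-1/d}$, and combining this with $\ur_c(\Phi_1)\ge\ur_c(\Pi_1)$ completes the argument.

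The only nonroutine point is the geometric path-length estimate $k\ge m/r-1$ for a self-avoiding path of step size $r$ that reaches from the centre of the box $W_m$ to its boundary; this is exactly what makes the series over $k$ start at an index growing with $m$ and therefore decay geometrically precisely in the regime $\kappa_dr^d<1$, while everything else is bookkeeping with the factorial moment measure formula already recorded in the excerpt. The minor technicalities I would check carefully are that the Euclidean distance $\dist(x_k,\partial W_m)$ equals $m-\|x_k\|_\infty$ for $x_k$ inside the cube $W_m$ (so the boundary constraint really does force $\|x_k\|_\infty\ge m-r$), and that the monotonicity step needs nothing beyond the identity $\E N_{m,k}=\int h_{m,k}\,\alpha^{(k)}$ with $h_{m,k}\ge0$, so no assumption beyond the stated $\sigma$-finiteness of the moment measures enters.
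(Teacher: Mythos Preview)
Your argument is correct and follows exactly the approach the paper sketches: it records the identity $\E N_{m,k}(\Phi,r)=\int h_{m,k}\,\md\alpha^{(k)}$ and then defers to~\cite{dcx-clust} for the details, which are precisely the monotonicity-of-integral step for the first assertion and the Poisson branching bound $(\kappa_d r^d)^k$ together with the minimal path-length estimate for the second. Your write-up fills in those omitted computations cleanly and with the same ideas.
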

%


In order to see void probabilities in action
it is customary to use some discrete  approximations of the continuum
percolation model. For $r > 0, x \in \R^d$, define the following subsets of $\R^d$.
Let
$Q^r = (-\frac{1}{2r},\frac{1}{2r}]^d$ and  $Q^r(x) = x + Q^r.$
We consider the following  discrete graph parametrised by  $n \in \N$. Let
$\mL^{*d}_n=(\Z^d_n, \mE^{*d}_n)$ be the usual close-packed lattice graph scaled down by
the factor $1/n$. It holds that $\Z^d_n  =\frac{1}{n}\Z^d$ 
for the set of vertices  and $\mE^{*d}_n = \{ \langle z_i,z_j \rangle \in
(\Z^d_n)^2 :  Q^{\frac{n}{2}}(z_i) \cap Q^{\frac{n}{2}}(z_j) \neq \emptyset\}$ for the set of edges, where $\Z$ denotes 
the set of integers.

A {\em contour} in $\mL^{*d}_n$ is a minimal collection of vertices such
that any infinite path in $\mL^{*d}_n$
from the origin has to contain one of these
vertices (the minimality condition implies that the removal of any vertex
from the collection will lead to the existence of an infinite path from
the origin without any intersection with the remaining vertices in the
collection). Let $\Gamma_n$ be the set of all contours around the origin in
$\mL^{*d}_n$. For any subset of points $\gamma\subset\R^d$, in particular for paths
$\gamma\in \Gamma_n$, we define  $Q_{\gamma} = \bigcup_{z \in \gamma} Q^n(z)$.

With this notation, we can define the ``upper'' critical radius $\ovr_c(\Phi)$ by
\begin{equation}
\label{blaszcyszyn_e:urc}
\ovr_c(\Phi)= \inf \Bigl\{r>0 : \text{for all}\;  n\ge1, \sum_{\gamma \in
\Gamma_n}\P\left(C(\Phi,r) \cap Q_{\gamma} = \emptyset\right) <  \infty\Bigr\}\,.
\end{equation}
It might be seen as the critical radius corresponding to the phase transition when the discrete model $\mL^{*d}_n=(\Z^d_n, \mE^{*d}_n)$,  approximating
$C(\Phi,r)$ with an arbitrary precision, starts percolating through the Peierls argument. As a consequence, $\ovr_c(\Phi) \geq r_c(\Phi)$ (see \cite[Lemma 4.1]{perc-dcx}).
The following ordering result follows immediately from the definitions.
\begin{corollary}
\label{blaszcyszyn_cor:ord_crit_rad_racs}
Let  $C_j=C(\Phi_j,r)$, $j=1,2$ be two Boolean models with simple point processes of germs
$\Phi_j$, $j=1,2$. 
If $\Phi_1$ has smaller voids probabilities then $\Phi_2$, 
then $\ovr_c(\Phi_1)\le\ovr_c(\Phi_2)$.
\end{corollary}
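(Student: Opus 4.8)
The plan is to show that the condition defining $\ovr_c$ in~(\ref{blaszcyszyn_e:urc}) is, for each fixed $r$ and $n$, just a comparison of \emph{void probabilities of the underlying point process} evaluated on fixed bounded Borel sets; once that is in place, the hypothesis $v_1\le v_2$ closes the argument by monotonicity of infima with respect to inclusion.

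\textbf{Reduction to point-process voids.} First I would fix $r>0$, $n\ge1$ and a contour $\gamma\in\Gamma_n$. By the minimality requirement in the definition of a contour, $\gamma$ is a finite set of vertices of $\mL^{*d}_n$ enclosing the origin, so $Q_\gamma=\bigcup_{z\in\gamma}Q^n(z)$ is a bounded Borel set. Since $C(\Phi,r)=\bigcup_{X\in\Phi}(X+B_o(r))$, a grain $X+B_o(r)$ meets $Q_\gamma$ exactly when $X$ lies in the $r$-neighbourhood $D_{\gamma,r}:=\{x\in\R^d:\dist(x,Q_\gamma)\le r\}$ of $Q_\gamma$; hence $\{C(\Phi,r)\cap Q_\gamma=\emptyset\}=\{\Phi(D_{\gamma,r})=0\}$. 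The set $D_{\gamma,r}$ is again bounded and Borel and, crucially, does not depend on the point process, so for $j=1,2$ one has $\P(C(\Phi_j,r)\cap Q_\gamma=\emptyset)=v_j(D_{\gamma,r})$, where $v_j$ denotes the void probability of $\Phi_j$.

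\textbf{Termwise domination and infima.} By hypothesis $v_1(B)\le v_2(B)$ for every bounded Borel set $B$, so in particular $v_1(D_{\gamma,r})\le v_2(D_{\gamma,r})$ for all $r,n,\gamma$. Summing over $\gamma\in\Gamma_n$ gives, for every $r>0$ and $n\ge1$,
$$\sum_{\gamma\in\Gamma_n}\P\bigl(C(\Phi_1,r)\cap Q_\gamma=\emptyset\bigr)\ \le\ \sum_{\gamma\in\Gamma_n}\P\bigl(C(\Phi_2,r)\cap Q_\gamma=\emptyset\bigr).$$
Consequently, any $r>0$ for which the right-hand side is finite for all $n$ also makes the left-hand side finite for all $n$; that is, the set of admissible radii for $\Phi_2$ in~(\ref{blaszcyszyn_e:urc}) is contained in the one for $\Phi_1$. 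Taking infima reverses the inclusion and yields $\ovr_c(\Phi_1)\le\ovr_c(\Phi_2)$.

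\textbf{Where the difficulty lies.} There is essentially no analytic obstacle here — as the statement indicates, it follows immediately from the definition~(\ref{blaszcyszyn_e:urc}). The only step deserving a line of care is the identification of $\{C(\Phi,r)\cap Q_\gamma=\emptyset\}$ with the void event $\{\Phi(D_{\gamma,r})=0\}$ on a \emph{fixed bounded} set, which is precisely what makes the hypothesis on void probabilities directly applicable; this uses the finiteness of contours (so that $D_{\gamma,r}$ is bounded) together with the fixed-spherical-grain form of the Boolean model.
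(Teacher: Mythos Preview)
Your argument is correct and is exactly the spelled-out version of what the paper means by ``follows immediately from the definitions'': the key observation is that for fixed spherical grains, $\{C(\Phi,r)\cap Q_\gamma=\emptyset\}$ is the void event $\{\Phi(D_{\gamma,r})=0\}$ on a fixed bounded Borel set, after which termwise comparison and passage to the infimum are automatic. There is nothing to add.
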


\begin{remark}
The finiteness of $\ovr_c$ 
is not clear even for Poisson point process  and hence
Corollary~\ref{blaszcyszyn_cor:ord_crit_rad_racs}
cannot be directly used to prove the
finiteness of the critical radii of $v$-weakly sub-Poisson point processes. However, 
the approach based on void probabilities
can be refined, as we shall see in what follows, to conclude the
aforementioned  property. 
\end{remark}

\subsubsection{Percolation of Level-Sets of Shot-Noise Fields}
\label{blaszcyszyn_sec:general_results}
Various percolation problems, including the classical continuum
percolation model considered in the previous section,
can be posed as percolation of some level
sets of shot-noise fields. We say that a \emph{level set percolates}
if it has an unbounded connected subset. We now 
present a useful lemma that can be used in conjunction with other
methods, in particular the 
famous {\em Peierls argument} (cf.~\cite[pp.~17--18]{Grimmett99}), 
to exhibit percolation of the level sets of shot-noise fields
for an appropriate choice of parameters. 
We sketch these arguments in the simple case of
the Boolean model and the application to the SINR model 
in the subsequent sections.
The proofs rely on coupling of a discrete model with the continuum model and
showing percolation or non-percolation in the discrete model using the
above bounds.

In what follows we will be interested in level-sets of shot-noise fields, i.e. sets of the form
$\{y\in S: V_{\Phi}(y) \geq {a}\}$ or $\{y\in S:V_{\Phi}(y) \leq {a}\}$ for some ${a} \in
\R$, where $\{V_{\Phi}(y), y\in S\}$  is a shot-noise  field
generated by some point process $\Phi$  with a non-negative response function $h$ as introduced in Definition~\ref{blaszcyszyn_defn:ISN}. For proving results on percolation of
level-sets, we rely heavily on the bounds from the following
lemma.

\index{shot-noise field!level-sets}
\begin{lemma}[{\upshape{\bf{\cite[Lemma 3.2]{dcx-perc}}}}]
\label{blaszcyszyn_lem:bounds_shot-noise}
Let $\Phi$ be a stationary point process with positive and finite intensity $\lambda$.
Then the following statements are true.
\begin{enumerate}
\item If $\Phi$ is $\alpha$-weakly sub-Poisson,
then 
\begin{equation}
\label{blaszcyszyn_eqn:upper_bd_shot-noise}
 \P\left(\min_{i\in\{1,\ldots,m\}}V_{\Phi}(y_i) \geq {a} \right) \leq e^{-sm{a}} \exp
 \lambda \int_{\R^d}(e^{s\sum_{i=1}^m {h}(x,y_i)} -1) \md x
\end{equation}
for any $y_1,\ldots,y_m \in S $ and $s > 0$. 
\item{If $\Phi$ is $v$-weakly sub-Poisson then, 
\begin{equation}
\label{blaszcyszyn_eqn:lower_bd_shot-noise} 
\P\left(\max_{i\in\{1,\ldots,m\}}V_{\Phi}(y_i) \leq {a}\right) \leq e^{-sm {a}} \exp
 \left(\lambda \int_{\R^d}(e^{s\sum_{i=1}^m {h}(x,y_i)} -1) \md x\right)
\end{equation}
for any $y_1,\ldots,y_m \in S$ and $s > 0$.
}
\end{enumerate}
\end{lemma}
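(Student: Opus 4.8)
\begin{proofsketch}
The plan is to obtain both bounds from the exponential Markov (Chernoff) inequality together with the Laplace-functional comparisons already available in Propositions~\ref{blaszcyszyn_p.moments_Laplace} and~\ref{blaszcyszyn_p.voids_Laplace}. The key device is to collapse the $m$ values $V_\Phi(y_1),\ldots,V_\Phi(y_m)$ into a single auxiliary shot-noise value by superposing the response functions: I would set $\tilde h(x):=\sum_{i=1}^m h(x,y_i)\ge0$, so that $W:=\sum_{i=1}^m V_\Phi(y_i)=\int_{\R^d}\tilde h(x)\,\Phi(\md x)$ is again an integral shot-noise value, now driven by the single non-negative response function $\tilde h$. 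It is precisely this superposition that makes the exponent $e^{s\sum_{i=1}^m h(x,y_i)}$ appear in the displayed bounds, rather than a product of $m$ separate factors. Stationarity of $\Phi$ is used only at the end, to replace $\alpha(\md x)$ by $\lambda\,\md x$.

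For part~(1), I would first note that on $\{\min_{i}V_\Phi(y_i)\ge a\}$ every summand of $W$ is at least $a$, so this event is contained in $\{W\ge ma\}$. Markov's inequality applied to $e^{sW}$ with $s>0$ gives $\P\left(W\ge ma\right)\le e^{-sma}\,\E e^{sW}$. Since $s\tilde h\ge0$ and $\Phi$ is $\alpha$-weakly sub-Poisson, Proposition~\ref{blaszcyszyn_p.moments_Laplace} applied with $f=s\tilde h$ gives $\E e^{sW}\le\exp\left(\int_{\R^d}(e^{s\tilde h(x)}-1)\,\alpha(\md x)\right)$. Substituting back $\tilde h=\sum_i h(\cdot,y_i)$ and $\alpha(\md x)=\lambda\,\md x$ is then precisely~(\ref{blaszcyszyn_eqn:upper_bd_shot-noise}).

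For part~(2), on $\{\max_i V_\Phi(y_i)\le a\}$ every summand of $W$ is at most $a$, so this event is contained in $\{W\le ma\}$, and I would use the Chernoff bound for a lower deviation: $\P\left(W\le ma\right)=\P\left(e^{-sW}\ge e^{-sma}\right)\le e^{sma}\,\E e^{-sW}=e^{sma}\,\cL_\Phi(s\tilde h)$ for $s>0$. It then remains to control $\cL_\Phi(s\tilde h)$ from above, which is exactly what $v$-weak sub-Poissonianity supplies: by Proposition~\ref{blaszcyszyn_p.voids_Laplace} (equivalently, the thinning/void coupling sketched in Remark~\ref{blaszcyszyn_r.moments-}) one has $\cL_\Phi(f)\le\exp\left(\int_{\R^d}(e^{-f(x)}-1)\,\alpha(\md x)\right)$ for every $f\ge0$. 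Taking $f=s\tilde h$ and using stationarity gives the second inequality~(\ref{blaszcyszyn_eqn:lower_bd_shot-noise}) (after relabelling the parameter~$s$).

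I do not expect any real difficulty: once the reduction to the superposed response function $\tilde h$ is in place, the argument is essentially bookkeeping. The points to handle with care are (a) that one genuinely must pass through the sum $W=\sum_i V_\Phi(y_i)$ rather than bounding the $V_\Phi(y_i)$ individually, since only the former produces the asserted exponents; (b) that the Laplace-functional inequalities of Propositions~\ref{blaszcyszyn_p.moments_Laplace} and~\ref{blaszcyszyn_p.voids_Laplace} apply to arguments of one sign, which is exactly our situation because $h\ge0$ and $s>0$, but one should check the standing integrability hypothesis on the response function so that $\tilde h$ is $\alpha$-integrable where it is positive and all expectations and integrals involved are finite (finiteness of the bounds for small $s$ forces in particular $\int_{\R^d}\tilde h(x)\,\md x<\infty$); and (c) that in part~(2) the statement actually invoked is the Laplace-transform form of $v$-weak sub-Poissonianity recorded in Proposition~\ref{blaszcyszyn_p.voids_Laplace}, not merely the defining void inequality.
\end{proofsketch}
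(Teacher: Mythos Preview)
Your approach is essentially identical to the paper's: both reduce the $m$-point statement to a single shot-noise value via the superposed response function $\tilde h=\sum_i h(\cdot,y_i)$, apply Chernoff's bound, and then invoke Propositions~\ref{blaszcyszyn_p.moments_Laplace} and~\ref{blaszcyszyn_p.voids_Laplace} respectively. Your write-up is more explicit about the event inclusions $\{\min_i V_\Phi(y_i)\ge a\}\subset\{W\ge ma\}$ and $\{\max_i V_\Phi(y_i)\le a\}\subset\{W\le ma\}$, but the underlying argument is the same; note only that your ``relabelling the parameter~$s$'' in part~(2) is really accounting for a sign typo in the displayed bound (the natural Chernoff/void-Laplace argument yields $e^{sma}\exp\bigl(\lambda\int(e^{-s\tilde h}-1)\,\md x\bigr)$, as in the earlier version of the lemma), not a genuine substitution.
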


\begin{proof}
Observe that $\sum_{i=1}^mV_{\Phi}(y_i) = \sum_{X \in
\Phi}\sum_{i=1}^m {h}(X,y_i)$ is itself a shot-noise field driven by
the response function $\sum_{i=1}^m {h}(.,y_i).$ Thus if
(\ref{blaszcyszyn_eqn:upper_bd_shot-noise}) and (\ref{blaszcyszyn_eqn:lower_bd_shot-noise})
are true for $m =1$, we can derive the general case as well. 
Using Chernoff's bound, we have that
$\P\left(V_{\Phi}(y) \geq {a}\right) \leq e^{-s {a}}\EXP{e^{sV_{\Phi}(y)}}$
and  $\P \left( V_{\Phi}(y) \leq {a} \right) \leq e^{s {a}}\EXP{e^{-sV_{\Phi}(y)}}$.
The two statements now follow from
Propositions~\ref{blaszcyszyn_p.moments_Laplace} and 
~\ref{blaszcyszyn_p.voids_Laplace} respectively.
\qed
\end{proof}

\subsubsection{$k$-Percolation in the Boolean Model}
\label{blaszcyszyn_sec:k_perc}
\index{Boolean model!k-percolation@$k$-percolation}
By $k$-percolation in a Boolean model, we understand percolation of
the subset of the space covered by at least $k$ grains of the Boolean
model; cf. Definition~\ref{blaszcyszyn_d.Boolean_model}. Our aim is to show that
for sub-Poisson point processes (i.e. point processes that are dcx-smaller 
than the Poisson point process) or negatively associated point processes, the critical connection radius $r$ for $k$-percolation of the Boolean model is non-degenerate,
i.e., the model does not percolate for $r$ too small and percolates
for $r$ sufficiently large. As will be seen in the proof given below, the finiteness of the critical radius $r$ for $k=1$ (i.e. the usual percolation) holds under a weaker assumption of ordering of void probabilities.  

Given a  point processes of germs $\Phi$ on $\R^d$, we define the coverage field
$\{V_{\Phi,r}(x),\allowbreak x\in \R^d\}$ by $V_{\Phi,r}(x) =  \sum_{X \in \Phi} \ind(x \in B_r(X))$, where
$B_r(x)$ denotes the Euclidean ball of radius $r$ centred at $x$. The
$k$-covered set is defined as the following level set.
Let $C_k(\Phi,r) =
\{x\in \R^d : V_{\Phi,r}(x) \geq k \}$. Note that $C_1(\Phi,r)=C(\Phi,r)$ is
the usual Boolean model. For $k\ge 1$, define the {\em critical radius
for $k$-percolation} as $$ r^k_c(\Phi) = \inf \{r :
\P(\mbox{$C_k(\Phi,r)$ percolates}) > 0 \}\,,$$ where, as before,
percolation means existence of an unbounded connected subset. Clearly,
$r_c^1(\Phi) = r_c(\Phi) \leq r_c^k(\Phi)$. As in various percolation
models, the first question is whether $0 < r^k_c(\Phi) < \infty$ ? This 
is known for the Poisson point process (\cite{Gilbert61}) and not for
many other point processes apart from that. The following  result is a
first step in that direction answering the question in affirmative for many point processes.  
\begin{proposition}[{~\upshape{\bf{\cite[Proposition. 3.4]{dcx-perc}}}}]
\label{blaszcyszyn_prop:k_perc_sub-Poisson_point processes}
Let $\Phi$ be a stationary point processes with intensity $\lambda$. For $k \geq 1, \lambda > 0$, there exist constants $c(\lambda)$ and
$C(\lambda,k)$ (not depending on the distribution of $\Phi$) such that
$0 < c(\lambda) \leq r_c^1(\Phi)$ provided  $\Phi$ is $\alpha$-weakly sub-Poisson
and $r_c^k(\Phi) \leq C(\lambda,k) < \infty$ provided
$\Phi$ is $v$-weakly sub-Poisson. Consequently, for $\Phi$ being weakly sub-Poisson,
combining both the above statements, it turns out that 
$$ 0 < c(\lambda) \leq r_c^1(\Phi) \leq r_c^k(\Phi) \leq C(\lambda,k) < \infty. $$
\end{proposition}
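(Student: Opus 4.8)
The plan is to establish the two halves of the sandwich separately, since the lower bound $c(\lambda)\le r_c^1(\Phi)$ uses only $\alpha$-weak sub-Poissonianity (moment measures) and the upper bound $r_c^k(\Phi)\le C(\lambda,k)$ uses only $v$-weak sub-Poissonianity (void probabilities); the final chain then follows by concatenating them with the trivial monotonicity $r_c^1(\Phi)\le r_c^k(\Phi)$, which holds because $C_1(\Phi,r)\supseteq C_k(\Phi,r)$ pointwise in $r$ (fewer grains needed to cover, larger covered set, easier to percolate), hence $\{C_k \text{ percolates}\}\subseteq\{C_1\text{ percolates}\}$ for each fixed $r$.

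For the lower bound, I would argue that for $r$ small the expected number of self-avoiding paths from the origin to the boundary of $W_m$ in the Gilbert graph tends to $0$, so no percolation. Concretely, using the path-counting functional $N_m(\Phi,r)=\sum_{k\ge1}N_{m,k}(\Phi,r)$ introduced before Proposition~\ref{blaszcyszyn_prop:lower_crit_rad}, one has $\E N_{m,k}(\Phi,r)=\int h_{m,k}\,\alpha^{(k)}(\md x_1,\ldots,\md x_k)$, and since $\Phi$ is $\alpha$-weakly sub-Poisson this is bounded above by the same integral against $\lambda^k\,\md x_1\cdots\md x_k$, i.e.\ by the Poisson value with intensity $\lambda$. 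A standard Poisson computation (each step of the path contributes a factor $\lambda\kappa_d r^d$, the number of $k$-step self-avoiding configurations reaching distance $\sim m$ is controlled) shows $\liminf_m \E N_m(\Phi,r)=0$ whenever $\lambda\kappa_d r^d<1$, say; then Markov's inequality gives $\P(N_m\ge1)\to0$, so $r\le r_c^1(\Phi)$. This yields $c(\lambda)=(\lambda\kappa_d)^{-1/d}$ up to the precise constant, depending only on $\lambda$ and $d$, not on the law of $\Phi$. Equivalently one can simply invoke Proposition~\ref{blaszcyszyn_prop:lower_crit_rad}, which already gives $\kappa_d\ur_c(\Phi)^d\ge\lambda^{-1}\cdot\text{(const)}$ for stationary $\alpha$-weakly sub-Poisson $\Phi$, together with $\ur_c(\Phi)\le r_c(\Phi)$.

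For the upper bound, I would run a Peierls (contour) argument on the discrete approximation $\mL_n^{*d}$. Fix a box-side parameter and tile $\R^d$ by small cubes of side comparable to $r$; declare a cube ``good'' if it contains at least $k$ points of $\Phi$ well inside it, so that a good cube together with radius-$r$ balls forces local $k$-coverage and good neighbouring cubes are $k$-connected in $C_k(\Phi,r)$. A contour around the origin of combinatorial length $\ell$ must consist of ``bad'' cubes, and by Lemma~\ref{blaszcyszyn_lem:bounds_shot-noise}(2) applied to the coverage field $V_{\Phi,r}$ (or directly to the void-type event $\{\Phi(\text{cube})<k\}$, controlled via the $v$-weak sub-Poisson bound on Laplace transforms in the negative domain, Proposition~\ref{blaszcyszyn_p.voids_Laplace}), the probability that a given cube is bad is at most the corresponding Poisson quantity $\P(\Pois(\mu)<k)$ where $\mu=\lambda\cdot(\text{cube volume})$ can be made large by taking $r$ large. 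Choosing $r$ large enough that this per-cube bad-probability $p(r,\lambda,k)$ is below the connective-constant threshold of the contour lattice, the expected number of contours of all lengths around the origin is summable; standard Borel--Cantelli / Peierls reasoning then gives a positive probability of an infinite good cluster, hence of percolation of $C_k(\Phi,r)$. The resulting threshold $C(\lambda,k)$ depends only on $\lambda,k,d$.

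The main obstacle I expect is the Peierls step in the upper bound: one must carefully set up the discretisation so that (i) a good cube genuinely produces $k$-fold coverage that connects across faces of adjacent good cubes, (ii) badness of distinct cubes in a contour can be decoupled enough to multiply the bounds — here the $v$-weak sub-Poisson comparison is used not on a single event but on a product over the disjoint cubes of a contour, which is exactly what Proposition~\ref{blaszcyszyn_p.voids_Laplace} / Remark~\ref{blaszcyszyn_p.associated_Laplace_remark} delivers via the Laplace-transform domination $\cL_\Phi(f)\le\cL_{\Pi_\lambda}(f)$ for $f\le0$ supported on the union of the contour cubes — and (iii) the combinatorial count of contours of length $\ell$ grows only exponentially, so that summability holds once the per-cube probability is small. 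None of these is conceptually new for Poisson; the content is that every inequality used is an inequality in the right direction under the sub-Poisson hypotheses, so the Poisson constants transfer verbatim.
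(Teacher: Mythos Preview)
Your proposal is correct and, for the upper bound, matches the paper's argument: discretise onto the close-packed lattice with cubes of side $r/\sqrt d$, declare a site open when its cube contains at least $k$ points of $\Phi$, bound the probability that all sites along a contour of length $n$ are closed via Lemma~\ref{blaszcyszyn_lem:bounds_shot-noise}(2), and finish with Peierls. One slip: in your last paragraph the Laplace-transform domination needed on the $v$-weakly sub-Poisson side is $\cL_\Phi(f)\le\cL_{\Pi_\lambda}(f)$ for $f\ge 0$, not $f\le 0$ (cf.\ Fig.~\ref{blaszcyszyn_f.flowchart1}); this does not affect the argument.

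For the lower bound you take a somewhat different route from the paper's proof sketch. The paper discretises first onto $\Tb^{*d}(3r)$ and then applies the exponential estimate of Lemma~\ref{blaszcyszyn_lem:bounds_shot-noise}(1) (Chernoff plus Proposition~\ref{blaszcyszyn_p.moments_Laplace}) to bound the probability that a discrete path of length $n$ is open by $((3^d-1)\exp\{-(s+(1-e^s)\lambda(3r)^d)\})^n$. You instead stay in the continuum and bound $\E N_m(\Phi,r)$ directly through the factorial moment measures, invoking Proposition~\ref{blaszcyszyn_prop:lower_crit_rad}. Both arguments are valid; the paper itself remarks immediately afterwards (see Proposition~\ref{cor:phase-transition}) that for $k=1$ your direct moment-measure route bypasses the exponential estimates and yields the sharper explicit constant $c(\lambda)=(\lambda\kappa_d)^{-1/d}$. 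The paper keeps the shot-noise/Chernoff machinery in the proof sketch because it is the common tool that also handles the $k$-coverage upper bound.
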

\begin{remark}\label{blaszcyszyn_rem:percolation}
{The above result not only shows non-triviality of the critical radius for
stationary weakly sub-Poisson processes  but also provides uniform
bounds. Examples of particular point processes for which this
non-triviality result holds are
determinantal point processes with trace-class integral
  kernels~(cf. Definition~\ref{blaszcyszyn_defn.det} and
Example~\ref{blaszcyszyn_ex.Det-Perm-weakly}) and  
  {Voronoi-}perturbed latices with  convexly
  sub-Poisson replication kernels (cf.
Example~\ref{blaszcyszyn_ex:Voronoi_pert_lattice}).}
For the case of zeros of Gaussian analytic functions which is not covered by us, a non-trivial critical radius for continuum percolation has been shown in \cite{Ghosh12}, where uniqueness of infinite clusters for both zeros of Gaussian analytic functions and the Ginibre point process is also proved.
\end{remark}
\begin{proofsketch}[{\upshape{of Proposition~\ref{blaszcyszyn_prop:k_perc_sub-Poisson_point processes}}}]
A little more notation is required. For $r > 0, x \in \R^d$, define the following subsets of $\R^d$. Let
$Q_r =  (-r,r]^d$ and $Q_r(x) = x + Q_r$. Furthermore, we consider the following discrete graph. 
Let $\Tb^{*d}(r) = (r\Z^d,\Ep^{*d}(r))$ be a close-packed graph on the scaled-up lattice $r\Z^d$; the edge-set is given by $\Ep^{*d}(r) = \{ \langle z_i,z_j \rangle \in (r\Z^d)^2 : Q_r(z_i) \cap Q_r(z_j) \neq \emptyset \}$. Recall that by site-percolation in a graph one means the existence of an infinite connected component in the random subgraph that remains after deletion of sites/vertices as per some random procedure.
{In order to prove the first statement, let $\Phi$ be
  $\alpha$-weakly sub-Poisson and $r > 0$.} Consider the close-packed lattice $\Tb^{*d}({3}r)$. Define the response
function ${h_r}(x,y) = \ind(x \in Q_{{3r/2}}(y))$ and the corresponding
shot-noise field $V^r_{\Phi}(.)$ {on
  $\Tb^{*d}({3}r)$}. 
Note that if $C(\Phi,r)$ percolates then $\{z:V^r_{\Phi}(z)\geq 1\}$ percolates on $\Tb^{*d}({3}r)$
as well. We shall now show that there exists a $r >0$ such that the
latter does not hold true. 
To prove this, we show that the expected
number of paths from $o$ of length $n$ in the random subgraph tends to
$0$ as $n \to \infty$ and Markov's inequality gives that there is
no infinite path (i.e. no percolation) in $\Tb^{*d}({3}r) \cap \{z :
V^r_{\Phi}(z) \geq 1\}.$ There are $(3^d-1)^n$ paths of length $n$
from $o$ in $\Tb^{*d}(2r)$ and the probability that a path $z_i \in
r\Z^d, 1 \leq i \leq n$ is open can be bounded from above as follows. Using~(\ref{blaszcyszyn_eqn:upper_bd_shot-noise})
and some further calculations, we get that
$$\P \left(\min_{i\in\{1,\ldots,n\}}V^r_{\Phi}(z_i) \geq 1 \right) \leq (\exp\{-(s+ (1-e^s)\lambda (3r)^d) \})^n.$$
So, the expected number of paths from $o$ of length $n$ in $\Tb^{*d}(3r) \cap \{z : V^r_{\Phi}(z) \geq 1\}$ is at most $((3^d-1)\exp\{-(s+ (1-e^s)\lambda (3r)^d) \})^n$. 
This term tends to $0$ as $n \to \infty$ for $r$ small enough and $s$ large enough. Since the choice of $r$ depends only on $\lambda$, we have shown that there exists a constant $c(\lambda) > 0$ such that $c(\lambda) \leq r_c^1(\Phi).$
For the upper bound, {let $\Phi$ be $\nu$-weakly
  sub-Poisson and}
consider the close-packed lattice $\Tb^{*d}(\frac{r}{\sqrt{d}})$. Define the response function ${h_r}(x,y)
= \ind(x \in Q_{{\frac{r}{\sqrt{d}}}}(y))$ and the
corresponding additive shot-noise field $V^r_{\Phi}(\cdot).$ 
Note that if the random subgraph $\Tb^{*d}(\frac{r}{\sqrt{d}}) \cap \{z : V^r_{\Phi}(z) \geq {k}\}$ percolates, then $C(\Phi,r)$ also percolates. Then, the following exponential bound is obtained by using (\ref{blaszcyszyn_eqn:lower_bd_shot-noise}) and some more calculations:
$$\P ( \max_{ i\in \{1,\ldots,n \}} V^r_{\Phi}(z_i) \leq {k} - 1 ) \leq {(}\exp\{-((1-e^{-s})\lambda (\frac{r}{\sqrt{d}})^d - s({k} - 1)) \})^n. \, \, \, \, \,$$
It now suffices to use the standard Peierls argument (cf.~\cite[pp.~17--18]{Grimmett99}) to complete the proof. 
\qed 
\end{proofsketch}

{%
For $k=1$; i.e., for the usual percolation in the Boolean model,
we can avoid the usage of the exponential estimates of
Lemma~\ref{blaszcyszyn_lem:bounds_shot-noise}  and work  directly with void
probabilities and factorial moment measures.  
This leads to improved bounds on the critical radius.  
\begin{proposition}[~\upshape{\bf{\cite[Corollary~3.11]{dcx-perc}}}]
\label{cor:phase-transition}

For a stationary weakly sub-Poisson point process $\Phi$  on
$\R^d$, $d\ge 2$,  it holds that

$$0< \frac{1}{(\lam \kappa_d)^{1/d}} \le r_c(\Phi)\le
\sqrt{d}\left(\frac{\log (3^d-2)}{
\lambda}\right)^{1/d}<\infty,$$ 
where $\kappa_d$ is the volume of the unit ball in $\R^d$.
The lower and the upper bounds hold, respectively, for
$\alpha$-weakly $\nu$-weakly  sub-Poisson processes.
\end{proposition}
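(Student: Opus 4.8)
The statement splits into two unrelated halves, and each is obtained by sharpening the corresponding half of Proposition~\ref{blaszcyszyn_prop:k_perc_sub-Poisson_point processes} in the case $k=1$: instead of pushing the exponential Laplace estimates of Lemma~\ref{blaszcyszyn_lem:bounds_shot-noise} through a Peierls scheme, for $k=1$ one can feed factorial moment measures (resp.\ void probabilities) \emph{directly} into the first‑moment method (resp.\ the contour sum), which is lossless and yields the clean constants $\kappa_d$ and $3^d-2$. Throughout, $\Phi$ is stationary with finite intensity $\lambda$, so its intensity measure is $\lambda\,\md x$.

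\textbf{Lower bound (uses only $\alpha$-weak sub-Poissonianity).} I would estimate the number $N_m(\Phi,r)$ of self-avoiding $r$-paths from the origin to $\partial W_m$ introduced before Proposition~\ref{blaszcyszyn_prop:lower_crit_rad}. Markov's inequality gives $\P(N_m(\Phi,r)\ge 1)\le\E N_m(\Phi,r)=\sum_{k\ge 1}\int_{(\R^d)^k}h_{m,k}(x_1,\dots,x_k)\,\alpha^{(k)}(\md x_1,\dots,\md x_k)$. Since $\Phi$ is $\alpha$-weakly sub-Poisson one has $\alpha^{(k)}(\md x_1,\dots,\md x_k)\le\lambda^{k}\,\md x_1\cdots\md x_k$ (equivalently, apply Proposition~\ref{blaszcyszyn_prop:lower_crit_rad} with $\Phi_2=\Pi_\lambda$); dropping the self-avoidance constraint and integrating out $x_k,x_{k-1},\dots,x_1$ in turn bounds $\int h_{m,k}\,\md x\le(\kappa_d r^d)^k$, and a path of $k$ points with consecutive jumps at most $r$ cannot reach $\partial W_m$ unless $k\gtrsim m/r$. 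Hence $\E N_m(\Phi,r)\le\sum_{k\ge m/r-1}(\lambda\kappa_d r^d)^k\to 0$ as $m\to\infty$ whenever $\lambda\kappa_d r^d<1$, so every such $r$ is subcritical and $r_c(\Phi)\ge(\lambda\kappa_d)^{-1/d}>0$.

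\textbf{Upper bound (uses only $v$-weak sub-Poissonianity).} I would run a Peierls argument on a discretisation tuned so that $\sqrt d$ appears. Tile $\R^d$ by the half-open cubes $Q(z)$ of side $\ell:=r/\sqrt d$ centred at $z\in\ell\Z^d$, with the close-packed ($*$‑)adjacency. Two points lying in $*$-adjacent cubes are at distance at most $2\ell\sqrt d=2r$, so their balls $B_r$ meet; hence if the set of \emph{occupied} cubes (those carrying a point of $\Phi$) $*$-percolates, then $C(\Phi,r)$ percolates. If it does not, the origin is encircled by a $*$-contour $\gamma$ of \emph{empty} cubes, and because the cubes of $\gamma$ are disjoint, the event that $\gamma$ is all empty equals $\{\Phi(\bigcup_{z\in\gamma}Q(z))=0\}$; applying $v$-weak sub-Poissonianity to the single Borel set $\bigcup_{z\in\gamma}Q(z)$,
\[
\P\!\left(\Phi\Bigl(\textstyle\bigcup_{z\in\gamma}Q(z)\Bigr)=0\right)\le e^{-\lambda|\gamma|\ell^{d}}=\bigl(e^{-\lambda r^{d}/d^{d/2}}\bigr)^{|\gamma|}.
\]
Summing over contours with the standard count $\#\{\text{contours of size }n\text{ around the origin}\}\le\text{(poly in }n)\cdot(3^d-2)^n$ for the close-packed lattice (Peierls estimate, cf.~\cite[pp.~17--18]{Grimmett99}), the resulting series converges and its tail over contours of diameter $\ge N$ tends to $0$ as soon as $e^{-\lambda r^{d}/d^{d/2}}(3^d-2)<1$, i.e.\ $r>\sqrt d\,(\log(3^d-2)/\lambda)^{1/d}$; for every such $r$ the occupied cubes percolate with positive probability, so $r_c(\Phi)\le\sqrt d\,(\log(3^d-2)/\lambda)^{1/d}<\infty$. (The restriction $d\ge 2$ is genuinely used: for $d=1$ the contour count degenerates and $3^d-2=1$ makes the bound vacuous.)

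\textbf{Where the work is.} Conceptually nothing is needed beyond Proposition~\ref{blaszcyszyn_prop:lower_crit_rad}, Lemma~\ref{blaszcyszyn_lem:bounds_shot-noise} and Corollary~\ref{blaszcyszyn_cor:ord_crit_rad_racs}; the effort is the geometric/combinatorial bookkeeping that lands exactly on $\kappa_d$ and $3^d-2$. The one point that looks delicate is that in the Peierls sum we control only each marginal void probability, not the joint law of the ``empty-cube'' events — but for $k=1$ this is precisely what the union trick in the displayed inequality circumvents; it is for $k\ge 2$ (where ``bad'' means ``fewer than $k$ points'') that one cannot reduce to a single set and must instead invoke the exponential estimate of Lemma~\ref{blaszcyszyn_lem:bounds_shot-noise}. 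A secondary, routine nuisance is the topological step that non-percolation of the occupied cubes forces every large box to be encircled by an empty contour; note one cannot shortcut the whole upper bound via $\ovr_c$ and Corollary~\ref{blaszcyszyn_cor:ord_crit_rad_racs}, since finiteness of $\ovr_c(\Pi_\lambda)$ is itself not known (cf.\ the Remark following Corollary~\ref{blaszcyszyn_cor:ord_crit_rad_racs}).
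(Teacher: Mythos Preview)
Your proposal is correct and follows essentially the same approach as the paper: the lower bound is precisely the second statement of Proposition~\ref{blaszcyszyn_prop:lower_crit_rad} (scaled to intensity $\lambda$), and for the upper bound the paper explicitly notes that for $k=1$ one ``can avoid the usage of the exponential estimates of Lemma~\ref{blaszcyszyn_lem:bounds_shot-noise} and work directly with void probabilities,'' which is exactly your union-of-empty-cubes Peierls argument on the $(r/\sqrt d)$-scaled close-packed lattice. Your discussion of why the $k\ge 2$ case genuinely needs Lemma~\ref{blaszcyszyn_lem:bounds_shot-noise}, and why Corollary~\ref{blaszcyszyn_cor:ord_crit_rad_racs} alone cannot deliver the upper bound, also matches the paper's own remarks.
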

}
{%
\begin{remark}
Applying the above result to 
an $\alpha$-weakly sub-Poisson point processes with unit intensity we observe that $r_c(\Phi) \geq \kappa_d^{-\frac{1}{d}} \to \infty$ as $d \to \infty.$ This means that in high dimensions,
it holds that $r_c(\Phi) \gg r_c(\Z^d) = \frac{1}{2}$, i.e. like the Poisson point process and even sub-Poisson point processes percolate much worse compared to the Euclidean lattice in high dimensions. 
\end{remark}

However, the question remains open whether the initial heuristic  reasoning saying
that more clustering worsens percolation (cf. p.~\pageref{blaszcyszyn_heuristic-percol}) holds for sub-Poisson point processes. As we have seen and shall see below, sub-Poisson point processes are more tractable than super-Poisson point processes in many respects.
}

\subsubsection{SINR Percolation}
\label{blaszcyszyn_sec:SINR_perc}
\index{SINR model}
\index{SINR model!percolation}
\index{percolation!of SINR model}
For a detailed background about this model of wireless communications, we refer to \cite{subpoisson} and the references therein. Here we directly begin with a formal introduction to the model. We shall work only in $\R^2$ in this section. 

 The parameters of the model are the non-negative numbers $P$ (signal
 power), $N$ (environmental noise), $\gamma$, $T$(SINR threshold) and an attenuation function $\ell :\R^2_+ \to \R_+$  satisfying the following assumptions: $\ell(x,y)  =  l(|x-y|)$ for some continuous function $l : \R_+ \to \R_+$, strictly decreasing on its support, with  $l(0) \geq TN/P$, $l(\cdot) \leq  1$, and  $\int_0^{\infty} x l(x) \md x  <  \infty$. These are exactly the assumptions made in~\cite{Dousse_etal06} and we refer to this article for a discussion on their validity.

Given a point processes $\Phi$, the {\em interference} generated due to the point processes at a location $x$ is defined as the following
shot-noise field $\{ I_{\Phi}(x), x\in \R^d \}$, where
\label{blaszcyszyn_eqn:interference} 
$I_{\Phi}(x) = \sum_{X \in \Phi\setminus\{x\}} l(|X-x|)$.
Define the signal-to-noise ratio (SINR) as follows :
\begin{equation}
\label{blaszcyszyn_eqn:sinr_defn}
\text{SINR}(x,y,\Phi,\gamma) =  \frac{Pl(|x-y|)}{N + \gamma P I_{\Phi\setminus\{x\}}(y)}.
\end{equation}

Let $\Phi_B$ and $\Phi_I$ be two point processes. Furthermore, let $P,N,T > 0$ and $\gamma \geq 0$. The SINR graph is defined as $\Tb(\Phi_B,\Phi_I,\gamma) = (\Phi_B,\Ep(\Phi_B,\Phi_I,\gamma))$ where 
$$\Ep(\Phi_B,\Phi_I,\gamma) = \{ \langle X,Y \rangle~\in~\Phi_B^2 : \min\{SINR(Y,X,\Phi_I,\gamma) , SINR(X,Y,\Phi_I,\gamma)\} > T \}.$$ 
The SNR graph (i.e. the graph without interference, $\gamma = 0$) is defined as $\Tb(\Phi_B) = \Tb(\Phi_B,\emptyset,0)$ and this is nothing but the Boolean model $C(\Phi_B,r_l)$ with $2r_l = l^{-1}(\frac{TN}{P})$. 

 Recall that percolation in the above graphs is the existence of an infinite connected component in the graph-theoretic sense. Denote by $\lambda_c(r) =\lambda (r_c(\Phi_\lambda)/r)^2$ the {\em critical intensity} for percolation of the Boolean model $C(\Pi_{\lambda},r)$. There is much more dependency in this graph than in the Boolean model where the edges depend only on the two corresponding vertices, but still we are able to suitably modify our techniques to obtain interesting results about non-trivial phase-transition in this model. More precisely, we are showing non-trivial percolation in SINR models with weakly sub-Poissonian set of transmitters and interferers and thereby considerably extending the results of \cite{Dousse_etal06}. In particular, the set of transmitters and interferers could be stationary determinantal point processes or sub-Poisson perturbed lattices and the following result still guarantees non-trivial phase transition in the model.
\begin{proposition}[{~\upshape{\bf{\cite[Propositions 3.9 and 3.10]{dcx-perc}}}}]
\label{blaszcyszyn_thm:sinr_poisson_perc}
The following statements are true.
\begin{enumerate}

\item[{\upshape{1.}}] {Let $\lambda > \lambda_c(r_l)$ and let $\Phi$ be a stationary $\alpha$-weakly sub-Poisson point process with intensity $\mu$ for some $\mu > 0$. Then there exists $\gamma > 0$ such that $\Tb(\Pi_{\lambda},\Phi,\gamma)$ percolates.}

\item[{\upshape{2.}}] {Let $\Phi$ be a stationary, $\gamma$-weakly sub-Poisson point processes and let $\Phi_I$ be a stationary $\alpha$-weakly sub-Poisson point process with intensity $\mu$ for some $\mu > 0.$ Furthermore, assume that $l(x) > 0$ for all $x \in \R_+$. Then there exist $P, \gamma > 0$ such that $\Tb(\Phi,\Phi_I,\gamma)$ percolates.}
\end{enumerate}
\end{proposition}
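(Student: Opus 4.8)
The plan is to run the renormalisation argument of Dousse, Franceschetti, Macris, Meester and Thiran~\cite{Dousse_etal06} developed for the Poisson case, substituting the two places where Poissonian estimates enter by the sub-Poisson bounds established above. Two ingredients are needed: (a) a robust supercriticality of the signal-to-noise graph, that is, of the Boolean model $C(\Phi_B,r_l)$ with $2r_l=l^{-1}(TN/P)$, even at a slightly reduced radius $r'<r_l$; and (b) a uniform exponential tail bound on the interference field $\{I_{\Phi_I}(y)\}$. Granting (a) and (b), one declares a cell of a coarse square lattice \emph{good} when $C(\Phi_B,r')$ crosses that cell and its neighbours in the required way \emph{and} $I_{\Phi_I}\le M$ on a bounded neighbourhood of the cell, and checks that good cells dominate a supercritical site percolation. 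On its infinite cluster the points of $\Phi_B$ lying in these cells form an infinite chain with consecutive points at distance $\le 2r'$, each seeing interference at most $M$; such a pair $X,Y$ is joined by an SINR edge, since $l(|X-Y|)\ge l(2r')$ and, for $\gamma$ chosen with $\gamma\,TM<l(2r')-l(2r_l)$, one has $P\,l(2r')>T(N+\gamma PM)\ge T\bigl(N+\gamma P\,I_{\Phi_I}(Y)\bigr)$, and symmetrically at $X$. Letting $\gamma\downarrow0$ decouples the SINR threshold and makes this choice feasible, whence the SINR graph percolates.

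For ingredient (a) in statement~1, $\Phi_B=\Pi_\lambda$ and the hypothesis $\lambda>\lambda_c(r_l)$ is, by the definition of the critical intensity, supercriticality of $C(\Pi_\lambda,r_l)$; since $\lambda_c(\cdot)$ is continuous and decreasing there is $r'<r_l$ with $\lambda>\lambda_c(r')$, and the classical block argument for supercritical Poisson Boolean models (see e.g.~\cite{MeeRoy96}) turns $C(\Pi_\lambda,r')$ into a coarse site percolation whose good-cell probability can be made as close to one as desired. For statement~2 the base process $\Phi$ is weakly sub-Poisson; here one first uses the freedom in $P$: since $l(x)>0$ for every $x\ge0$, we have $r_l=\tfrac12\,l^{-1}(TN/P)\to\infty$ as $P\to\infty$, so by Proposition~\ref{blaszcyszyn_prop:k_perc_sub-Poisson_point processes} (whose finiteness assertion needs only $v$-weak sub-Poissonianity) one may fix $P$ large and $r'<r_l$ for which the discrete Peierls bound used in the proof of that proposition already supplies a coarse site percolation of ``Boolean-good'' cells with good-cell probability close to one.

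For ingredient (b), note that $\{I_{\Phi_I}(y)\}$ is a non-negative shot-noise field with response $h(x,y)=l(|x-y|)\le1$ and $\int_0^\infty x\,l(x)\,dx<\infty$. Since $\Phi_I$ is $\alpha$-weakly sub-Poisson with intensity $\mu$, Proposition~\ref{blaszcyszyn_p.moments_Laplace} gives, for every $s>0$ and every $y\in\R^2$,
\[
\EXP{e^{s\,I_{\Phi_I}(y)}}\le\exp\!\Bigl(\mu\!\int_{\R^2}\!\bigl(e^{s\,l(|x|)}-1\bigr)\,dx\Bigr)\le\exp\!\Bigl(2\pi\mu(e^{s}-1)\!\int_0^\infty\! x\,l(x)\,dx\Bigr)<\infty,
\]
uniformly in $y$ (the second inequality uses convexity of $u\mapsto e^u-1$ on $[0,s]$ together with $l\le1$). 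Chernoff's inequality then yields a uniform exponential tail $\pr{I_{\Phi_I}(y)\ge M}\le e^{-sM}\cdot\mathrm{const}$; combined with a union bound over a fine grid covering a box and a crude control of the oscillation of $y\mapsto I_{\Phi_I}(y)$ (continuity of $l$; if necessary one peels off the far field, whose mean $2\pi\mu\int_L^\infty x\,l(x)\,dx$ tends to $0$), this shows that for every bounded box $K$ and every $\varepsilon>0$ there is an $M$ with $\pr{\sup_{y\in K}I_{\Phi_I}(y)>M}<\varepsilon$, fast enough for the Peierls sum over lattice contours to converge. This bound plays the role that statement~1 of Lemma~\ref{blaszcyszyn_lem:bounds_shot-noise} plays in the treatment of the coverage field.

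It then remains to tune the parameters: choose the coarse cell size so that the Boolean crossings of (a) occur with probability $>1-\varepsilon$, choose $M$ so that the interference of (b) is controlled on the relevant neighbourhoods with probability $>1-\varepsilon$, and take $\varepsilon$ small so that the resulting ``good cell'' site process stochastically dominates a supercritical Bernoulli percolation; finally fix $\gamma>0$ with $\gamma\,TM<l(2r')-l(2r_l)$ and, in statement~2, the value of $P$ already chosen. The main obstacle is exactly this last step: the interference is a long-range functional of $\Phi_I$, so cell-goodness is not literally of finite range, and one must cut the far field at a large radius and absorb its contribution---arbitrarily small in mean by the integrability of $x\,l(x)$---into the slack $l(2r')-l(2r_l)$, while keeping the truncated good-cell field finite-range dependent. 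This bookkeeping is the only delicate point, and it goes through exactly as in the Poisson treatment of~\cite{Dousse_etal06} once the uniform exponential moment bound displayed above is available.
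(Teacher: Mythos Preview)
The paper does not give its own proof of this proposition; it merely cites \cite[Propositions~3.9 and~3.10]{dcx-perc}. Your sketch reproduces what that reference does: run the Dousse--Franceschetti--Macris--Meester--Thiran renormalisation, replacing the two Poisson inputs by (a) the finiteness of the critical radius for $v$-weakly sub-Poisson germ processes (Proposition~\ref{blaszcyszyn_prop:k_perc_sub-Poisson_point processes}), used after blowing up $r_l$ via $P\to\infty$ in statement~2, and (b) the exponential-moment bound on the interference shot-noise coming from $\alpha$-weak sub-Poissonianity of $\Phi_I$ (Proposition~\ref{blaszcyszyn_p.moments_Laplace}, equivalently statement~1 of Lemma~\ref{blaszcyszyn_lem:bounds_shot-noise}). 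Your convexity step $e^{s\,l}-1\le l\,(e^s-1)$ for $l\in[0,1]$ is correct and yields the uniform finiteness of the Laplace transform needed for the Peierls sum; the far-field truncation you describe to recover finite-range dependence is exactly the standard bookkeeping from \cite{Dousse_etal06}. One small wording slip: in statement~2 the hypothesis on $\Phi$ is ``$\gamma$-weakly'' in the printed text, which is a typo for $v$-weakly (void probabilities), not full weak sub-Poissonianity as you write at one point---but you then correctly use only the $v$-weak part, so the argument is unaffected.
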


{
\begin{exercise}
Two related percolation models are the $k$-nearest neighbourhood graph ($k$-NNG) and the random connection model. Non-trivial phase transitions for percolation is shown for both models when defined on a Poisson point process in \cite[Sect. 2.4 and 2.5]{Franc_etal07}. We invite the reader to answer the challenging question of whether the methods  of \cite{Franc_etal07} combined with ours could be used to show non-trivial phase transition for weakly sub-Poisson point processes, too. 
\textit{Hint.} The results of \cite{Daley05} on the Lilypond model could be useful to show non-percolation for $1$-NNG. 
\end{exercise}
}

\subsection{U-Statistics of Point Processes}
\label{blaszcyszyn_sec:u_stat}
\index{U statistics}
Denote by $\Phi^{(k)}(\md (x_1,\ldots,x_k))=
\Phi(\md x_1)\left(\Phi\setminus\delta_{x_1}\right)(\md x_2)\dots
\left(\Phi\setminus\sum_{i=1}^{k-1}\delta_{x_i}\right)(\md x_k) $ 
the (empirical) $k$-th order factorial moment measure of $\Phi$. Note that this is a point
process on $\R^{dk}$ with mean measure $\alpha^{(k)}$.
In case when $\Phi$ is simple, $\Phi^{(k)}$ {is simple too and} corresponds to 
the point process of {ordered} $k$-tuples of distinct points of $\Phi$.

In analogy with classical U-statistics, a U-statistic of a point
process $\Phi$ can be defined as the functional $F(\Phi) = \sum_{X
\in \Phi^{(k)}}f(X)$,  for a non-negative symmetric function $f$
(\cite{Reitzner11}). In case when $\Phi$ is infinite one often considers
$F(\Phi\cap W)$, where $W\subset\R$ is a bounded Borel set. 
The reader is referred to~\cite{Reitzner11}  
or~\cite[Sect. 2]{Schulte12} for many interesting U-statistics of point
processes of which two --- subgraphs in a random geometric graph
(\cite[Chap. 3]{Penrose03}) and simplices in a random geometric
complex (see~\cite[Sect. 8.4.4]{Yukich12}) --- are described below.  
\subsubsection{Examples}

\begin{example}[Subgraphs counts in a random geometric graph]
\label{blaszcyszyn_ex.RGG}
\index{random geometric graph}
Let $\Phi$ be a finite point process and $r > 0.$ The random geometric
graph $\Tb(\Phi,r) = (\Phi,\Ep(\Phi,r))$ is defined through its vertices and edges, where the edge set is given by $\Ep(\Phi,r) = \{
(X,Y)\in\Phi : |X-Y| \leq r \}$. For a connected subgraph $\Gamma$ on
$k$ vertices, define $h : \R^{dk} \to \{0,1\}$ by 
$h_{\Gamma,r}(x) = \ind(\Tb(x,r) \cong \Gamma)$, where $\cong$ stands
for graph isomorphism. Now, the number of $\Gamma$-induced subgraphs
in $\Phi$ is defined as  
$$G_r(\Phi,\Gamma) = \frac{1}{k!}\sum_{X \in \Phi^{(k)}}h_{\Gamma,r}(X).$$
Clearly, $G_r(\Phi,\Gamma)$ is a U-statistic of $\Phi.$ {In the special case that  $|\Gamma| = 2$, 
then $G_r(\Phi,\Gamma)$ is
the number of edges.}
\end{example}
\begin{example}[Simplices counts in a random geometric simplex]
\label{blaszcyszyn_ex.simplices}
\index{simplicial complex}
\index{Cech complex@\v Cech complex}
A non-empty family of finite subsets $\Delta(S)$ of a set $S$ is an {\em abstract simplicial complex} 
if for every set $X \in \Delta(S)$ and every subset $Y \subset X$, we have that $Y \in \Delta(S)$. We shall from now on drop the adjective ``abstract''. 
The elements of $\Delta(S)$ are called {\em faces resp. simplices} of the simplicial complex and the dimension of a face $X$ is $|X|-1$. Given a finite point process $\Phi$, one can define the following two simplicial complexes : The {\em \v{C}ech complex} $Ce(\Phi,r)$ is defined as the simplicial complex whose $k$-faces are $\{X_0,\ldots,X_k\} \subset \Phi$ such
that $\cap_i B_r(X_i) \neq \emptyset.$ The {\em Vietoris-Rips complex} \index{Vietoris-Rips complex}
$VR(\Phi,r)$ is defined as the simplicial complex whose $k$-faces are
$\{X_0,\ldots,X_k\} \subset \Phi$ such that $ B_r(X_i) \cap B_r(X_j)
\neq \emptyset$ for all $0 \leq i \neq j \leq k.$ The $1$-skeleton (i.e. the subcomplex consisting of all $0$-faces and $1$-faces) of
the two complexes are the same and it is nothing but the random geometric graph $\Tb(\Phi,r)$ of Example~\ref{blaszcyszyn_ex.RGG}.  Also, the
\v{C}ech complex $Ce(\Phi,r)$ is homotopy 
equivalent to the Boolean model $C(\Phi,r)$. 
The number of $k$-faces in the two
 simplicial complexes can be determined as follows: 
\begin{eqnarray*}
S_k(Ce(\Phi,r)) & = & \frac{1}{k!}\sum_{X \in \Phi^{(k)}}\ind(X
\mbox{ is a $k$-face of $Ce(\Phi,r)$})
\end{eqnarray*}
and similarly for  $S_k(VR(\Phi,r))$.
Clearly, both characteristics
are examples of  U-statistics of $\Phi$.
\end{example}

Note that a U-statistic is an additive shot-noise of the
point process $\Phi^{(k)}$ and this suggests 
the applicability  of our theory to U-statistics. 
Speaking a bit more generally, consider of a family of U-statistics $\Ks$ and
define an additive shot-noise field $\{F(\Phi)\}_{F \in \Ks}$ indexed by $\Ks$.
Why do we consider such an abstraction? Here is an obvious example.

\begin{example}
Consider  $\Ks = \{F_B, F_B(\Phi) = \sum_{X \in \Phi^{(k)}}\ind(f(X) \in B)\}_{B\in B_0(\R_+)}$ for some given non-negative symmetric 
function $f$ defined on
$\R^k$. Then the additive shot-noise field on $\Phi^{(k)}$, indexed by
bounded Borel sets $B\in\R_+$, defined above is nothing but the random field characterising 
the following point process 
on $\R_+$ associated to the U-statistics of~$\Phi$: 
$$\eta_{(f,\Phi)} = \{f(X) : X \in \Phi^{(k)} \}\,$$
in the sense that $F_B(\Phi)=\eta_{(f,\Phi)}(B)$.
(Note that if $|f^{-1}([0,x])| < \infty$ for all $x \in \R_+$, then
$\eta_{\Phi}$ is indeed locally finite and hence a point process and we 
always  assume that $f$ satisfies such a condition.) 
This point process has been studied in~\cite{Schulte12},
in the special case when $\Phi=\Pi_\lambda$ 
is a homogeneous Poisson point process. 
It is shown that if $\lambda\to\infty$, then $\eta_{(f,\Phi)}$
tends to a Poisson point
process with explicitly known intensity measure.
\end{example}

For any U-statistic $F$ and for a bounded window $W\subset\R^d$, we have that
$$\E F(\Phi \cap W)
= \frac{1}{k!} \int_{W}f(x_1,\ldots,x_k)\alpha^{(k)}\md(x_1,\ldots,x_k).$$
Similarly, we can express higher moments of the shot-noise field
$\{F(\Phi)\}_{F \in \Ks}$ by those of $\Phi$. 
With these observations in hand and using Proposition~\ref{blaszcyszyn_thm:isn_rm}, we can state the following result. 
\begin{proposition}
\label{blaszcyszyn_thm:ord_u_stat}
Let $\Phi_1,\Phi_2$ be two point processes with respective factorial moment
measures $\alpha_i^{(k)}$, $i=1,2$ and let $W$ be a bounded
Borel set in~$\R^d$. Consider a family $\Ks$ of U-statistics. 
Then the following statements are true.
\begin{enumerate}
\item[{\upshape 1.}] If $\alpha_1^{(j)}(\cdot) \leq \alpha_2^{(j)}(\cdot)$ 
for all $1 \leq j \leq k$, then 
$$\E(F_1(\Phi_1 \cap W)F_2(\Phi_1 \cap
W)\ldots F_k(\Phi_1 \cap W)) \leq \E(F_1(\Phi_2 \cap W)F_2(\Phi_2 \cap
W)\ldots F_k(\Phi_2 \cap W))$$ 
for any $k$-tuple of U-statistics
$F_1,\ldots,F_k\in\Ks$. In particular, for any given $F\in\Ks$ based
on a non-negative symmetric function~$f$ it holds that 
$\alpha_{\eta_{(f,\Phi_1)}}^{(k)}(\cdot) \leq \alpha_{\eta_{(f,\Phi_2)}}^{(k)}(\cdot)$,
where $\alpha_{\eta_{(f,\Phi_i)}}^{(k)}$ is the $k$-th order factorial
moment measure of~$\eta_{(f,\Phi_i)}$.

\item[{\upshape 2.}] If $\Phi_1 \leq_{\rm{dcx}} \Phi_2$, then $\{F(\Phi_1 \cap W)\}_{F \in \Ks} \leq_{\rm{idcx}} \{F(\Phi_2 \cap W)\}_{F \in \Ks}$ and in particular $\eta_{(f,\Phi_1)} \leq_{\rm{idcx}} \eta_{(f,\Phi_2)}$. 
\end{enumerate}
\end{proposition}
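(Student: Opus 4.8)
The plan is to exploit that a U-statistic $F(\Phi)=\sum_{X\in\Phi^{(k)}}f(X)$ is an additive shot-noise functional of the $k$-th order factorial power process $\Phi^{(k)}$, to expand products of such functionals in terms of the factorial moment measures of $\Phi$, and then to reduce both statements to properties of $\Phi$ itself via a discretisation of the window $W$. Concretely, I would fix a finite partition $W=\bigcup_{i=1}^{N}B_i$ into disjoint bounded Borel cells, approximate each (non-negative, symmetric, order-$k_j$) function $f_j$ from below by a function constant on every product cell $B_{i_1}\times\dots\times B_{i_{k_j}}$, and note that the corresponding approximant of $F_j(\Phi\cap W)$ is a polynomial $\Psi^N_j\bigl(\Phi(B_1),\dots,\Phi(B_N)\bigr)$ with non-negative coefficients, assembled from monomials $\prod_i (\Phi(B_i))_{r_i}$, where $(u)_r=u(u-1)\cdots(u-r+1)$ is the $r$-th falling factorial. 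Falling factorials rather than ordinary powers appear because the inner sum defining $\Phi^{(k)}$ ranges over \emph{distinct} points: $\Phi^{(k)}$ of a product cell $B_{i_1}\times\dots\times B_{i_k}$ equals $\prod_l\Phi(B_{i_l})$ when the indices are distinct and reduces to a product of falling factorials $(\Phi(B))_m$ whenever $m$ indices coincide in a single cell $B$.

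For statement~1, I would expand $\prod_{j=1}^{k}\Psi^N_j$, regroup the monomials according to the coincidence pattern of the $k_1+\dots+k_k$ cell indices, take expectations and let $N\to\infty$: a coincidence block of size $m$ contributes a non-negative integrand integrated against $\Phi^{(m)}$, so one obtains
$$\EXP{\prod_{j=1}^{k}F_j(\Phi\cap W)}=\sum_{m\ge 1}\int_{W^m}h_m(x_1,\dots,x_m)\,\alpha^{(m)}(\md x_1,\dots,\md x_m),$$
with every $h_m\ge 0$ and only finitely many nonzero terms (those with $m\le k_1+\dots+k_k$); this is the multivariate Campbell product formula for factorial moment measures. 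Hence comparability $\alpha^{(m)}_1\le\alpha^{(m)}_2$ for all relevant $m$ yields the stated inequality, and the ``in particular'' claim about $\alpha^{(k)}_{\eta_{(f,\Phi_i)}}$ is the special case in which the $F_j$ are the counting functionals $F_B(\Phi)=\sum_{X\in\Phi^{(k)}}\ind(f(X)\in B)$, the same expansion being applied to $\eta^{(k)}_{(f,\Phi_i)}$.

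For statement~2, the key point is that each monomial $\prod_i(u_i)_{r_i}$ is increasing and directionally convex in $(u_1,\dots,u_N)$ on the lattice $\Z_+^N$: in one variable the falling factorial has first difference $r(u)_{r-1}\ge 0$ and second difference $r(r-1)(u)_{r-2}\ge 0$, hence is increasing and convex there, and a product of non-negative, increasing, convex functions of distinct variables is coordinatewise convex and supermodular, i.e.\ dcx. Thus each $\Psi^N_j$ is idcx in the count vector, so for any idcx test function $g$ on $\R^l$ the composition $g(\Psi^N_1,\dots,\Psi^N_l)$ is again idcx, hence dcx, as a function of $\bigl(\Phi(B_1),\dots,\Phi(B_N)\bigr)$. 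Applying the definition of $\Phi_1\le_{\rm{dcx}}\Phi_2$ to this function gives idcx-ordering of the vectors $\bigl(\Psi^N_1,\dots,\Psi^N_l\bigr)$ evaluated at the counts of $\Phi_1$ and at those of $\Phi_2$; letting $N\to\infty$ by monotone convergence (using $\E F_j(\Phi_i\cap W)<\infty$ to keep the relevant expectations finite, as in the proof of Proposition~\ref{blaszcyszyn_thm:isn_rm}) transfers this to $\{F(\Phi_1\cap W)\}_{F\in\Ks}\le_{\rm{idcx}}\{F(\Phi_2\cap W)\}_{F\in\Ks}$, and the statement for $\eta_{(f,\Phi_i)}$ is again the case $\Ks=\{F_B\}$. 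The conclusion is only idcx, not dcx, since by statement~1 the means $\E F(\Phi_i\cap W)$ generically differ, so the equality of means that dcx would force fails.

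I expect the main obstacle to be the two features absent from the first-order shot-noise situation of Proposition~\ref{blaszcyszyn_thm:isn_rm}: first, verifying that the ``diagonal'' falling-factorial monomials produced by the distinct-points constraint are genuinely idcx on the count lattice --- equivalently, that the factorial-power operation $\Phi\mapsto\Phi^{(m)}$ is monotone for idcx order --- which is what legitimises the composition argument; and second, the passage to the limit $N\to\infty$, i.e.\ checking that the polynomial approximants increase to the U-statistic with adequate moment control and that idcx ordering is preserved under the limit. Both are routine given the machinery of~\cite{snorder}, but they are where the real work lies.
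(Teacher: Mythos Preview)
Your proposal is correct and follows the paper's approach: the paper offers only the two observations preceding the statement (a U-statistic is an additive shot-noise of $\Phi^{(k)}$, and its moments are integrals against the factorial moment measures of $\Phi$) together with a pointer to Proposition~\ref{blaszcyszyn_thm:isn_rm}, and your discretisation argument is precisely how one makes those hints rigorous. The one piece of genuine work you correctly isolate --- that products of falling factorials of disjoint counts are idcx on $\Z_+^N$, so that idcx test functions composed with the polynomial approximants remain idcx --- is exactly what bridges the gap between the linear shot-noise situation of Proposition~\ref{blaszcyszyn_thm:isn_rm} and the nonlinear U-statistic setting here, and the limit step is then handled just as in~\cite{snorder}.
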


\subsubsection{Some Properties of Random Geometric Graphs}
\label{blaszcyszyn_sec:Rgg}
The subgraph count $G_r(\cdot,\cdot)$ considered in
Example~\ref{blaszcyszyn_ex.RGG} is only a particular example of a U-statistic but
its detailed study in the case of the Poisson point process
(see \cite[Chap. 3]{Penrose03}) was the motivation to derive results
about subgraph counts of a random geometric graph over other point
processes (\cite{Adler12}). Here, we explain some simple
results about clique numbers, maximal degree and chromatic number that
can be deduced as easy corollaries of ordering of subgraph counts
known due to Proposition~\ref{blaszcyszyn_thm:ord_u_stat}.  

Slightly differing from \cite{Penrose03}, we
consider the following asymptotic regime for a 
stationary point process $\Phi$ with unit intensity. We look at the properties of
$\Tb(\Phi_n,r_n)$, $n \geq 1$, where $\Phi_n = \Phi \cap W_n$ with $W_n =
[-\frac{n^{\frac{1}{d}}}{2},\frac{n^{\frac{1}{d}}}{2}]^d$ and a radius
regime $r_n.$ To compare our results with those of \cite{Penrose03},
replace the $r_n^d$ factor in our results by $nr_n^d.$ Detailed
asymptotics of $G_n(\Phi_n,\Gamma) = G_{r_n}(\Phi_n,\Gamma)$ for
general stationary point processes have been studied in \cite[Sect.
3]{Adler12}.  

Let $\mC_n = \mC_n(\Phi),\De_n = \De_n(\Phi),\X_n = \X_n(\Phi)$
denote the size of the largest clique, maximal vertex
degree and chromatic number of $\Tb(\Phi_n,r_n)$, respectively. Heuristic
arguments for these quantities say that they should increase with more
clustering in the point process. We give a more formal statement
of this heuristic at the end of this section.  

Let $\Gamma_k$ denote the complete graph on $k$ vertices and
$\Gamma^{'}_1,\ldots,\Gamma^{'}_m$ be the maximum collection of
non-isomorphic graphs on $k$ vertices having maximum degree $k-1.$
Then for $k \geq 1$, we have the following two equalities and the
graph-theoretic inequality that drive the result following them:
$\{\mC_n < k\} = \{G_n(\Phi_n,\Gamma_k) = 0\}$, $\{\De_n < k-1\}
= \cap_{i=1}^m\{G_n(\Phi_n,\Gamma^{'}_i) = 0\}$,  $\mC_n \leq \X_n \leq \De_n + 1$.
\begin{corollary}\label{blasz.coro}
Let $\Phi$ be a stationary $\alpha$-weakly sub-Poisson point process with unit intensity. 
If $nr_n^{d(k-1)} \to 0$, then
$$ \lim_{n \to \infty} \P(\mC_n < k) = \lim_{n \to \infty}  \P(\De_n < k - 1) = \lim_{n \to \infty}  \P(\X_n < k) = 1.$$
\end{corollary}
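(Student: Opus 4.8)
The plan is to reduce all three convergences to the vanishing of expected induced-subgraph counts and then to dominate those counts by the corresponding quantities for the unit-rate Poisson process, using the first-moment comparison of Proposition~\ref{blaszcyszyn_thm:ord_u_stat}.

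First I would use the three relations recorded just before the statement. From $\mC_n\le\X_n\le\De_n+1$ one gets the inclusions $\{\De_n<k-1\}\subseteq\{\X_n<k\}\subseteq\{\mC_n<k\}$, so it is enough to show $\P(\De_n<k-1)\to1$, and the other two limits follow. Then, from $\{\De_n<k-1\}=\bigcap_{i=1}^{m}\{G_n(\Phi_n,\Gamma'_i)=0\}$ it suffices to prove $\P(G_n(\Phi_n,\Gamma'_i)\ge1)\to0$ for each of the finitely many graphs $\Gamma'_i$; note that each $\Gamma'_i$, having a vertex of degree $k-1$, is a \emph{connected} graph on $k$ vertices. (For $k=1$ the hypothesis $nr_n^{d(k-1)}\to0$ becomes $n\to0$, which is false, so the statement is vacuous; hence assume $k\ge2$ henceforth.)

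Next, for a fixed connected graph $\Gamma$ on $k$ vertices I would bound $\E G_n(\Phi_n,\Gamma)$ by the Poisson count. Since $\Phi$ is stationary with unit intensity and $\alpha$-weakly sub-Poisson, its factorial moment measures satisfy $\alpha_\Phi^{(j)}\le\nu_d^{\otimes j}$ for all $j\ge1$, the latter being the $j$-th factorial moment measure of the homogeneous Poisson process $\Pi_1$ of unit intensity. By part~1 of Proposition~\ref{blaszcyszyn_thm:ord_u_stat}, applied to the single U-statistic $G_{r_n}(\,\cdot\,,\Gamma)$ on the window $W_n$, this gives $\E G_{r_n}(\Phi_n,\Gamma)\le\E G_{r_n}(\Pi_1\cap W_n,\Gamma)$. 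The right-hand side equals $\tfrac1{k!}\int_{W_n^k}h_{\Gamma,r_n}(x_1,\dots,x_k)\,\md x_1\cdots\md x_k$, and since $\Gamma$ is connected, bounding $h_{\Gamma,r_n}$ by a sum of spanning-tree edge-indicators on $\{x_1,\dots,x_k\}$ and integrating out the leaves of each tree one at a time (using $|W_n|=n$) yields the elementary estimate $\E G_{r_n}(\Pi_1\cap W_n,\Gamma)\le C(k,d)\,n\,r_n^{d(k-1)}$. Hence $\E G_n(\Phi_n,\Gamma)\le C(k,d)\,n\,r_n^{d(k-1)}\to0$ under the hypothesis.

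Finally I would combine these pieces: by Markov's inequality $\P(G_n(\Phi_n,\Gamma'_i)\ge1)\le\E G_n(\Phi_n,\Gamma'_i)\to0$, so a union bound over $i=1,\dots,m$ gives $\P(\De_n\ge k-1)\to0$, i.e. $\P(\De_n<k-1)\to1$, and then $\P(\X_n<k)\to1$ and $\P(\mC_n<k)\to1$ by the inclusions above (one may equally treat $\mC_n$ directly through $\{\mC_n<k\}=\{G_n(\Phi_n,\Gamma_k)=0\}$, since $\Gamma_k$ is connected on $k$ vertices). The only step that genuinely uses the assumption on $\Phi$ is the first-moment comparison $\E G_{r_n}(\Phi_n,\Gamma)\le\E G_{r_n}(\Pi_1\cap W_n,\Gamma)$; the mild subtlety there — and essentially the only thing needing care, since the rest is a corollary-level Markov/union-bound argument — is that Proposition~\ref{blaszcyszyn_thm:ord_u_stat} must be applied to the set function defined by the indicator $h_{\Gamma,r_n}$ on the (non-product) Borel subset of $(\R^d)^k$ it carves out, which is exactly what that proposition delivers.
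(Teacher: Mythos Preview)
Your proof is correct and follows essentially the same route as the paper: compare the expected induced-subgraph counts to those of the unit-rate Poisson process via Proposition~\ref{blaszcyszyn_thm:ord_u_stat}, then use Markov's inequality on each count and conclude for $\mC_n$, $\De_n$, and $\X_n$ through the relations recorded just before the corollary. The only cosmetic difference is that the paper outsources the Poisson estimate to \cite[Theorem~6.1]{Penrose03}, whereas you supply the spanning-tree integration bound $\E G_{r_n}(\Pi_1\cap W_n,\Gamma)\le C(k,d)\,n\,r_n^{d(k-1)}$ directly; both give the same conclusion.
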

\begin{proofsketch}
To prove the result for $\mC_n$, due to Markov's inequality and the fact that
 $\E G_n(\Phi_n,\Gamma) \leq \E G_n(\Pi_{1_n},\Gamma)$ (see
 Proposition~\ref{blaszcyszyn_thm:ord_u_stat}), it suffices to show that
 $\E G_n(\Pi_{1_n},\Gamma) \to 0$ for $nr_n^{d(k-1)} \to 0$ and any
 graph $\Gamma$ on $k$ vertices. This is already known
 from \cite[Theorem 6.1]{Penrose03}. The proof for $\De_n$ is similar
 and it also proves the result for $\X_n$.
\qed  
\end{proofsketch} 

Note that if $k =2$, the results of Corollary~\ref{blasz.coro} implies that
$\lim_{n \to \infty} \P(\mC_n = 1) = 1$ by using the trivial lower
bound of $\mC_n \geq 1$, and analogously for the other two quantities. To
derive a similar result for $k \geq 2,$ we need variance bounds for
$G_n(\cdot,\cdot)$ to use the standard second moment method.
These variance bounds for $G_n(\cdot,\cdot)$ are available in the case of
negatively associated point processes (see \cite[Sect.~3.4]{Adler12}).    

Further, for point processes with $\alpha^{(k)}$ admitting a
continuous density
$\alpha^{(k)}(\md(x_1,\ldots,\allowbreak x_k))=\rho^{(k)}(x_1,\ldots,x_k)\md
x_1,\ldots,\md x_k$ in the neighbourhood of  $(0,\ldots,0)$, such that 
$\rho^{(k)}(0,\allowbreak\ldots,0) = 0$ (for example, $\alpha$-weakly
sub-Poisson point processes such as the Ginibre point process,
perturbed lattice, zeros of Gaussian analytic function et al.) we know
from \cite[Sect.~3.2]{Adler12} that
$\frac{\EXP{G_n(\Phi_n,\Gamma_k)}}{nr_n^{d(k-1)}} \to 0.$ Using this
result, we can show that $\lim_{n \to \infty} \P(\mC_n < k) \to 1$
even for $nr_n^{d(k-1)} \to \lambda > 0.$ This is not true for the
Poisson point process (see \cite[Theorem 6.1]{Penrose03}). Thus, we
have the following inequality for point processes with
$\rho^{(k)}(0,\allowbreak\ldots,0) = 0$:
\begin{equation}\label{blasz.equ.p}
\lim_{n \to \infty} \P(\mC_n(\Pi_1) < k) \leq \lim_{n \to \infty} \P(\mC_n(\Phi) < k).
\end{equation}
This inequality can be easily concluded from the fact that for
radius regimes with $nr_n^{d(k-1)} \to \lambda \geq 0,$ 
the expression on the right-hand side of~(\ref{blasz.equ.p}) is equal to $1$
while for the radius regime $nr_n^{d(k-1)} \to \infty,$ the expression on the left-hand side
of~\ref{blasz.equ.p} is equal to $0$ (see \cite[Theorem 6.1]{Penrose03}). In vague terms, we can
rephrase the above inequality as that $\mC_n(\Pi_1)$ is ``stronger
ordered'' than $\mC_n(\Phi)$ in the limit, i.e., the Poisson point
process is likely to have a larger clique number than a point process
with $\rho^{(k)}(0,\ldots,0) = 0.$  Similar ``strong ordering''
results for $\De_n$'s and $\X_n$'s matching well with heuristics can
also be derived.   

\subsection{Random Geometric Complexes}
\label{blaszcyszyn_sec:rand_geom_complexes}
We have already noted in Example~\ref{blaszcyszyn_ex.simplices}
that the number of $k$-faces in \v{C}ech and Vietoris-Rips complexes
on point processes are U-statistics. In this section we will
further describe the topological properties of these random geometric
complexes. In the same manner as simplicial complexes are considered
to be topological extensions of graphs, so are random geometric
complexes to random geometric graphs. Random geometric graphs on
Poisson or binomial point processes are a well-researched subject with
many applications (see \cite{Penrose03}). Motivated by
research in topological data analysis (\cite{Carlsson09,Ghrist08}) and
relying on results from random geometric graphs, random geometric
complexes on Poisson or binomial point processes have been studied
recently \cite{Kahle11}. In \cite{Adler12}, the investigation of the topology
of random geometric complexes has been extended to a wider class of stationary point processes using tools from stochastic ordering of point processes. 
However, we shall content ourselves with just explaining one of the key phase-transition results 
given in \cite{Adler12}.  

One of the first steps towards the understanding of \v{C}ech and
Vietoris-Rips
complexes is to understand the behaviour of their {\em Betti numbers}
$\beta_k(\cdot), k \geq 0$ as  functions of $r$. 
Informally speaking, the $k$-th ($k \geq 1$)
Betti number counts the number of $k+1$-dimensional holes in the
appropriate Euclidean embedding of the simplicial
complex. The $0$-th Betti number is the number of connected components
in the simplicial complex, $\beta_1$ is the number of two-dimensional 
or "circular" holes, $\beta_2$ is the number of three-dimensional
voids, etc.  
If $\beta_0(\cdot) = 1$, then we say that the
simplicial complex 
is connected.
Unlike simplicial counts, Betti numbers are not
U-statistics. 

Regarding the dependence of the Betti number  $\beta_k(r)$ 
of \v{C}ech complexes on $r$, for $k\ge1$, unlike in earlier
percolation models, there are {\em two phase-transitions} happening 
in this case: $\beta_k(r)$  goes from zero
to positive (the complex ``starts creating'' $k+1$-dimensional holes) 
and, alternatively, it goes from positive to
zero (the holes are ``filled in'').  
If one were to think about the relative behaviours of
the Betti numbers $\beta_k(\cdot)$, $k \geq 1$, of the \v{C}ech complexes on
two point process $\Phi_1, \Phi_2$ where $\Phi_1$ is "less clustered"
than $\Phi_2$, then it should be possible 
that the first threshold decreases with clustering and the 
second threshold increases with clustering. 
Indeed, depending on the strength
of the result we require, weak sub-Poissonianity or negative
association turn out to be the right notion to prove the above
heuristic more rigorously.  



Let $\Phi$ be a stationary weakly sub-Poisson point process with unit
intensity and let $W_n = n^{1/d}[-\frac{1}{2},\frac{1}{2}]^d$. Let $r_n
\geq 0, n \geq 1$ be the corresponding radius regime with $\lim_n r_n
\in [0,\infty]$. Based on whether $\lim_n r_n$ is $0,\infty$ or a
constant between $0$ and $\infty$, we shall get different scaling limits for the Betti
numbers. Under certain technical assumptions, there is a function
$f^k(\cdot)$ depending on the joint intensities of the point process
($f^k(r) \to 0$ as $r \to 0$ for point processes for which 
$\rho^{(k)}(0,\ldots,0) = 0$ and otherwise $f^k(\cdot) \equiv 1$) such that the following statements hold for $k \geq 1$. 
1. If  
$$r_n^{d(k+1)}f^{k+2}(r_n) = o(n^{-1}) \, \, \, \mbox{or} \, \, \, r_n^d = \omega(\log n),$$
then with high probability $\beta_k(Ce(\Phi \cap W_n,r_n)) = 0.$ 2. Let
$\Phi$ be negatively associated. If 
$$r_n^{d(k+1)}f^{k+2}(r_n) = \omega(n^{-1}) \, \, \, \mbox{and} \, \, \, r_n^d = O(1), $$
then with high probability  $\beta_k(Ce(\Phi_n,r_n)) \neq 0$.

To see how the above statements vindicate the heuristic described before, consider the first threshold for the appearance of Betti numbers (with high probability). In the Poisson case ($f^k(\cdot) \equiv 1$) this threshold is exactly $r_n \to 0$ and $nr_n^{d(k+1)} \to \infty$, whereas for weakly sub-Poisson point processes with $\rho^{(k)}(0,\ldots,0) = 0$ ($f^k(r) \to 0$ as $r \to 0$), this is at least $r_n \to 0$ and $nr_n^{d(k+1)}  \to \infty$, i.e.,  the threshold is larger for these weakly sub-Poisson point processes. For specific point processes such as the Ginibre point process or the zeros of Gaussian analytic functions ($f^k(r) = r^{k(k-1)}$), this threshold is only at $r_n \to 0$ and $nr_n^{(k+1)(k+2)} \to \infty$ which is much larger than that of the Poisson point process.  

Now if we consider the second threshold, when Betti numbers vanish, then for the Ginibre point process or the zeros of Gaussian analytic functions, with high probability $\beta_k(Ce(\Phi \cap W_n,r_n)) = 0$ for $r_n^d = \omega(\sqrt{\log n})$. This is of strictly smaller order compared to the Poisson case where $r_n^d = \omega(\log n)$. For other weakly sub-Poisson point processes, the above results imply that the order of the radius threshold for vanishing of Betti numbers cannot exceed $r_n^d = \omega(\log n)$, i.e. that of the Poisson point process. Hence, this second threshold is smaller for weakly sub-Poisson point processes. Negative association just assures positivity of Betti numbers for the intermediate regime of $r_n \to r \in (0,\infty)$.
Now if we consider the second threshold, when Betti numbers vanish, then for the Ginibre point process or the zeros of Gaussian analytic functions, with high probability $\beta_k(Ce(\Phi \cap W_n,r_n)) = 0$ for $r_n^d = \omega(\sqrt{\log n})$. This is of strictly smaller order compared to the Poisson case where $r_n^d = \omega(\log n)$. For other weakly sub-Poisson point processes, the above results imply that the order of the radius threshold for vanishing of Betti numbers cannot exceed $r_n^d = \omega(\log n)$, i.e. that of the Poisson point process. Hence, this second threshold is smaller for weakly sub-Poisson point processes. Negative association just assures positivity of Betti numbers for the intermediate regime of $r_n \to r \in (0,\infty)$.

Barring the upper bound for the vanishing of Betti numbers, similar results hold true for the Vietoris-Rips complex, too. One can obtain asymptotics for Euler characteristic of the \v{C}ech complex using the Morse-theoretic point process approach. This and asymptotics for Morse critical points on weakly sub-Poisson or negatively associated point processes are also obtained
in \cite{Adler12}.  

Furthermore, analogous to percolation in random geometric graphs, one can study percolation in random \v{C}ech complexes by defining the following graph. Any two $k$-faces of the \v{C}ech complex are said to be connected via an edge if both of them are contained in a $(k+1)$-face of the complex. In the case of $k = 0$, the graph obtained is the random geometric graph. The existence of non-trivial percolation radius for any $k \geq 1$ is guaranteed by Proposition \ref{blaszcyszyn_prop:k_perc_sub-Poisson_point processes} provided the point process $\Phi$ is weakly sub-Poisson. This question was raised in \cite[Sect. 4]{Kahle11} for the Poisson point process.

\subsection{Coverage in the Boolean Model}
\label{blaszcyszyn_sec:coverage}
\index{Boolean model!coverage}
In previous sections of this Chapter, we looked at percolation and connectivity aspects of the Boolean model. Yet another aspect of the Boolean model
that has been well studied for the Poisson point process is coverage~\cite{Hall88}. Here, one is concerned with the volume of
the set $C_k(\Phi,r)$ defined in Sect.~\ref{blaszcyszyn_sec:k_perc}. The heuristic is 
again that the volume of the $1$-covered region should decrease
with clustering, but the situation would reverse for the $k$-covered region for large enough $k$.  We are in a position to state a more formal
statement once we introduce still another definition. We say that two {discrete}
random variables $X,Y$ are ordered in {\em uniformly convex
variable order} (UCVO)($X \leq_{\text{uv}} Y$) if their respective density functions $f,g$ satisfy the following conditions: $\text{supp}(f) \subset \text{supp}(g)$, $f(\cdot)/g(\cdot)$ is an unimodal function but their respective distribution functions are not
ordered, i.e. $F(\cdot) \nleqslant G(\cdot)$ or vice-versa (see \cite{Whitt1985}) and where $\text{supp}(\cdot)$  
denotes the support of a function.

%

\begin{proposition}[{~\upshape{\bf{\cite[Proposition 6.2]{dcx-clust}}}}]
\label{blaszcyszyn_prop:k_coverage}
Let $\Phi_1$ and $\Phi_2$ be two simple, stationary point processes such that
$\Phi_1(B_O(r)) \leq_{uv} \Phi_2(B_O(r)$ for $r \geq 0$.
Then there exists $k_0 \geq 1$ such that for any 
bounded Borel set $W \subset \R^d$ it holds that 
\begin{eqnarray*}
 & \E \nu_d(C_k(\Phi_1,r) \cap W ) \geq \E \nu_d(C_k(\Phi_2,r) \cap W) & \quad \text{ for all } k: 1 \leq k \leq k_0, \, \, \\
 \mbox{and}\\
 &  \E\nu_d(C_k(\Phi_1,r) \cap W ) \leq \E\nu_d(\|C_k(\Phi_2,r) \cap W) & \quad \text{ for all } k > k_0,.
\end{eqnarray*}
\end{proposition}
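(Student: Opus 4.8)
The plan is to reduce the statement about expected volumes to a pointwise (per-location) comparison of the probabilities that a given point $x$ is $k$-covered. By Fubini's theorem and stationarity,
\[
\E \nu_d(C_k(\Phi_i,r)\cap W) = \int_W \P\bigl(V_{\Phi_i,r}(x)\ge k\bigr)\,\md x = \nu_d(W)\,\P\bigl(\Phi_i(B_O(r))\ge k\bigr),
\]
since $V_{\Phi_i,r}(x) = \Phi_i(B_r(x))$ and, by stationarity, $\Phi_i(B_r(x))$ has the same law as $\Phi_i(B_O(r))$. Thus the whole proposition is equivalent to the purely one-dimensional claim: if $X = \Phi_1(B_O(r)) \le_{\mathrm{uv}} Y = \Phi_2(B_O(r))$ are two $\Z_+$-valued random variables ordered in uniformly convex variable order, then there is a threshold $k_0$ with $\P(X\ge k)\ge \P(Y\ge k)$ for $1\le k\le k_0$ and $\P(X\ge k)\le \P(Y\ge k)$ for $k>k_0$.

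First I would unpack the UCVO definition: writing $f,g$ for the probability mass functions of $X,Y$, we have $\operatorname{supp}(f)\subset\operatorname{supp}(g)$ and $f/g$ unimodal, while the distribution functions $F,G$ are not ordered. Set $\psi(j) = f(j)/g(j)$ on $\operatorname{supp}(g)$; unimodality means $\psi$ first increases then decreases. The key elementary lemma is that the ``survival difference'' $D(k) := \P(X\ge k) - \P(Y\ge k) = \sum_{j\ge k}(f(j)-g(j))$ changes sign at most once, from $+$ to $-$, as $k$ increases. To see this, note $f(j)-g(j) = g(j)(\psi(j)-1)$ has the sign of $\psi(j)-1$, which (by unimodality of $\psi$) is first negative, then positive, then negative again — a sign pattern $(-,+,-)$ over $j$. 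A standard sign-variation argument (the ``variation-diminishing'' property of the partial-sum operator, or simply direct bookkeeping) then shows that $D(k)=\sum_{j\ge k}g(j)(\psi(j)-1)$, read as $k$ decreases from $\infty$ to $1$, is eventually nonpositive and, once it becomes nonnegative, stays nonnegative; equivalently, as $k$ increases, $D(k)$ goes through the pattern $(+,-)$ with at most one sign change. Since $X,Y$ have the same mean — which is where the hypothesis that the distribution functions are \emph{not} ordered, together with $\sum_j(f(j)-g(j))=0$, forces $D$ to genuinely change sign rather than being identically signed — there is a genuine crossing point, which we define to be $k_0$. Then $D(k)\ge 0$ for $k\le k_0$ and $D(k)\le 0$ for $k>k_0$, which is exactly the claimed inequality after multiplying by $\nu_d(W)\ge 0$.

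The main obstacle is the sign-change lemma itself: one must argue carefully that the $(-,+,-)$ sign pattern of $j\mapsto f(j)-g(j)$, combined with $\sum_j(f(j)-g(j))=0$ and the non-ordering of the CDFs, yields \emph{exactly one} up-crossing of $k\mapsto D(k)$ located at a finite $k_0\ge 1$, rather than zero crossings (which the non-ordering hypothesis rules out) or the wrong orientation. I would handle this by reading off the values $D(1)=\P(X\ge1)-\P(Y\ge1)$ and $\lim_{k\to\infty}D(k)=0$, using $\operatorname{supp}(f)\subset\operatorname{supp}(g)$ to control the tail, and invoking the classical result (this is precisely the content of Whitt's comparison in \cite{Whitt1985}, already cited in the paper) that UCVO is the sharp condition under which survival functions cross exactly once. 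The remaining steps — Fubini, stationarity, the identification $V_{\Phi,r}(x)=\Phi(B_r(x))$ — are routine, and $W$ enters only through the harmless factor $\nu_d(W)$.
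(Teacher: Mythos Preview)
Your approach is correct and matches the paper's: the paper itself does not give a full proof but only records the Fubini/stationarity reduction $\E\nu_d(C_k(\Phi,r)\cap W)=\nu_d(W)\,\P(\Phi(B_O(r))\ge k)$ and defers the one-dimensional single-crossing property to the UCVO framework of \cite{Whitt1985}, which is exactly what you reconstruct. One small slip: you invoke ``same mean'' for $X,Y$, but UCVO does not assume equal means and the proposition does not assume equal intensities; what actually forces $D(k)$ to change sign is precisely the non-ordering of the CDFs built into the UCVO definition (together with the trivial identity $\sum_j(f(j)-g(j))=0$), and your parenthetical already says this --- just drop the mean remark.
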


For $\Phi_1$, we can take any of the sub-Poisson perturbed lattices presented in Example \ref{blaszcyszyn_ex:Voronoi_pert_lattice} or a determinantal point process (see Definition \ref{blaszcyszyn_defn.det}) and $\Phi_2$ as a Poisson point process. We can also take $\Phi_1$ to be a Poisson point process and $\Phi_2$ to be any of the super-Poisson perturbed lattices presented in Example \ref{blaszcyszyn_ex:Voronoi_pert_lattice} or a permanental point process (see Definition \ref{blaszcyszyn_defn.perm}). 

Note that for a stationary point process $\Phi$, we have that
$$\E\nu_d(C_k(\Phi,r) \cap W ) = \int_{W} \P(\Phi(B_x(r)) \geq k) \md x = \nu_d(W) \P(\Phi(B_O(r)) \geq k).$$
It is easy to derive from the above equation that the expected $1$-covered region of ${\nu}$-weakly sub-Poisson point processes is larger than that of the Poisson point process \cite[Sect.~6.1]{snorder}.     

The question of coverage also arises in the SINR model of Sect.~\ref{blaszcyszyn_sec:SINR_perc}. We shall not delve further into this question other than remarking that in a certain variant of the SINR model, it has been shown that the coverage and capacity which is defined as $\log(1 + SINR)$ increase with increase in dcx-ordering~\cite[Sect.~5.2.3]{Yogesh_thesis}.

\section{Outlook}
\label{blaszcyszyn_sec:outlook}

Let us mention some possible directions for future work. While several examples of point processes comparable to the Poisson point process were presented, a notable absentee from our list
are  Gibbs point processes, which should appear in the context of
modelling of clustering phenomena. In particular, some Gibbs
hard-core point processes are expected to be sub-Poisson. 
Bounds for the probability generating functionals with 
estimates for Ripley's K-function and the intensity and higher
order correlation functions for  some  stationary
locally stable Gibbs point process are given
in~\cite{ss12b}.  
Also, some geometric structures on specific Gibbs point processes have
already been considered (see e.g. \cite{Coupier12,Schreiber12}), and
these processes perhaps could serve as new reference processes,
replacing in this role the Poisson point process. As for today we are
not aware of  any such results. 

The question of other useful orders for comparison of point processes also is worthy of investigation. In \cite[Sect.~4.4]{Yogesh_thesis}, it has already been shown that related orders of supermodularity and componentwise convexity are not suitable orders whereas convexity could be useful. It might be interesting to study convex ordering of point processes.  

Though we have presented applications to various geometric models, there are many other questions such as the ordering of critical radii for percolation, uniqueness of giant component in continuum percolation (see \ref{blaszcyszyn_rem:percolation}), concentration inequalities for more general functionals (see Proposition \ref{blaszcyszyn_prop:conc_ineq}), asymptotic analysis of other U-statistics along the lines of Sect. \ref{blaszcyszyn_sec:rand_geom_complexes} and \ref{blaszcyszyn_sec:Rgg}, etc., which to be investigated. 

\begin{acknowledgement}
This paper is based on research supported in part
by Israel Science Foundation 853/10, AFOSR
FA8655-11-1-3039 and FP7-ICT-318493-STREP.
\end{acknowledgement}

\bibliographystyle{spmpsci}
\bibliography{references_2013-12-12_new}
%
%

\end{document}